\documentclass[a4paper,12pt]{article}
 \usepackage{graphicx}  
\usepackage{setspace}  
\usepackage{geometry}  
\usepackage{lipsum}    
\usepackage{epsf}
\usepackage{amsbsy,amsmath}
\usepackage{mathtools}
\usepackage{mathrsfs}
\usepackage{amsfonts}
\usepackage{amssymb}
\usepackage{enumitem}
\usepackage{eucal}
\usepackage{enumitem}
\usepackage{graphics,mathrsfs}
\usepackage{amsthm}
\usepackage{secdot}
\usepackage{xcolor}
\usepackage{bm}
\usepackage{titlesec}
\usepackage{setspace}
\usepackage{caption}  
\usepackage{algorithm}
\usepackage[noend]{algpseudocode}
\usepackage{tcolorbox}
\usepackage{mathrsfs}
\usepackage{extarrows}

\renewcommand{\figurename}{FIGURE}

\makeatletter
\renewcommand{\fnum@figure}{\figurename~\thefigure\hspace{1em}}
\makeatother

\usepackage{graphicx}
\usepackage{soul}
\usepackage{todonotes}
\usepackage{subcaption}

\graphicspath{{images/}}

\allowdisplaybreaks

\newtheorem{theorem}{Theorem}[section]
\newtheorem{lemma}[theorem]{Lemma}
\newtheorem{proposition}[theorem]{Proposition}
\newtheorem{corollary}[theorem]{Corollary}

\newtheorem{remark}[theorem]{Remark}
\newtheorem{defin}[theorem]{Definition}

\theoremstyle{definition}

 \numberwithin{equation}{section}

\numberwithin{equation}{section}

 \newsavebox{\savepar}

\usepackage{hyperref}
\hypersetup{
    colorlinks=true,
    linkcolor=blue,
    filecolor=magenta,      
    urlcolor=cyan,
    citecolor=purple,
    }

\title{\begin{center}
	A sparse spectral method on a class of domains bounded by planar algebraic curves
\end{center}}

\author{
Jiajie Yao\textsuperscript{a,}\thanks{Corresponding author: \url{jy219@leicester.ac.uk}. The first author is grateful for the financial support of a Future 50 PhD scholarship from the University of Leicester.
\\\\
\textbf{Mathematics Subject Classification (2020):} 65N35, 33C50, 33F05
\\
\textbf{Keywords:} Generalised Koornwinder polynomials, Semiclassical orthogonal polynomials, Spectral method, Algebraic curves, Sparse operator matrices, Fast transforms} , 
Marco Fasondini\textsuperscript{a}, and
Sheehan Olver\textsuperscript{b}
}

\date{
\small
\textsuperscript{a}\textit{School of Computing and Mathematical Sciences,
University of Leicester, UK}\\
\textsuperscript{b}\textit{Department of Mathematics,
Imperial College London, UK}
}

\begin{document}
\captionsetup{font=small}
\maketitle

\begin{abstract}
	We develop a sparse spectral method for solving partial differential equations on a  class of two-dimensional geometries bounded by algebraic curves.
	The numerical method uses generalised bivariate Koornwinder polynomials which form a complete orthogonal  basis, but one which is {\it not graded} in terms of polynomial degree.  The polynomials are built from new families of univariate semiclassical orthogonal polynomials whose associated operator matrices (Jacobi matrices, raising matrices and differentiation matrices) are computed with optimal linear complexity in the number of basis functions.
When used to discretise partial differential equations the resulting matrices are sparse enabling efficient numerical solution. Moreover, we develop fast transforms from values on a grid to expansion coefficients. 
	The efficiency and accuracy of the resulting spectral  method are illustrated through a series of numerical experiments on geometries whose boundaries are smooth and piecewise smooth including non-convex geometries. We observe algebraic convergence for geometries with corners, which accelerates to  exponentially fast (spectral) convergence when the boundary is smooth.
\end{abstract}

\section{Introduction}\label{sect:intro}
Univariate classical orthogonal polynomials (OPs) have been well studied, with their definitions, properties, computation and applications documented in, e.g.,~\cite[\href{https://dlmf.nist.gov/18}{Ch.~18}]{NIST}. 
Classical OPs are orthogonal with respect to a weight function $w(x)$  which satisfies the Pearson equation
\begin{align}
	\frac{\mathrm{d}}{\mathrm{d}x} (\sigma(x) w(x)) = \tau(x) w(x), \label{Pearson}
\end{align}
where $\sigma$ and $\tau$ are polynomials with $\deg \sigma \leq 2$ and $\deg \tau = 1$. 
Semiclassical orthogonal polynomials are defined through a weight function $w(x)$ that satisfies the Pearson equation (\ref{Pearson}), but without  restrictions on the degrees of $\sigma$ and $\tau$~\cite{Semiclassical}. 

Orthogonal polynomials can be extended to multiple variables, as comprehensively discussed in \cite{DunklXu}. One can construct bivariate OPs from univariate OPs via Koornwinder's formula \cite{Koornwinder}:

\begin{theorem}[\cite{DunklXu}, Section~2.6.1] \label{thm:koornwinderbasis}
	Let $w_1(x)$ and $w_2(x)$ be weight functions on $[\alpha,\beta]$ and $[\gamma,\delta]$, respectively, and let $\rho(x)$ be a positive function on $(\alpha,\beta)$, where either
	\begin{itemize}
		\item[ ]\textup{case (i)}: $\rho$ is a degree 1 polynomial, or
		\item[] \textup{case (ii)}: $\rho$ is the square root of a polynomial of degree $\leq 2$,  $\delta=-\gamma>0$ and $w_2$ is an even function.
	\end{itemize}
For $0 \leq k \leq n$ and $n \in \mathbb{N}_0 := \{0,1,2,\dots\}$, let $p_{n,k}$ be a univariate OP of degree $n$ with respect to the weight $\rho(x)^{2k+1}\,w_1(x)$ and let $q_{n}$ be a univariate OP of degree $n$ with respect to the weight $w_2(x)$, then the polynomials (referred to as Koornwinder polynomials) given by 
	\begin{align}\label{KoornwinderOPs}
		H_{n,k}(x,y) := p_{n-k,k}(x) \, \rho(x)^k \, q_{k}\left(\frac{y}{\rho(x)}\right), \quad 0 \leq k \leq n,
	\end{align}
	are bivariate OPs with respect to the weight $W(x,y) := w_1(x)\,w_2\left(\displaystyle{\frac{y}{\rho(x)}}\right)$  on the domain
	\begin{equation}
		\Omega := \{(x,y)\  | \ \gamma\rho(x) \leq y \leq \delta\rho(x), \, \alpha\leq x \leq \beta\},  \label{eq:Omega}
	\end{equation} 
referred to as a Koornwinder domain, hence
\begin{equation}
\left\langle H_{n,k}, H_{m,\ell}	  \right\rangle_W = \delta_{n,m}\delta_{k,\ell}	\| H_{n,k}	  \|_W^2 \label{eq:Koornip}
\end{equation}
for
\[
\left\langle f, g  \right\rangle_W := \iint_{\Omega}f(x,y) g(x,y)W(x,y)\mathrm{d}x\mathrm{d}y\quad \hbox{and}\quad   \|f	  \|_W :=  \sqrt{\left\langle f, f  \right\rangle_W}.
\]
\end{theorem}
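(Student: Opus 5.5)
The plan is to prove two things separately: that each $H_{n,k}$ is a bivariate polynomial of total degree $n$, and that the $H_{n,k}$ are mutually orthogonal. For orthogonality we substitute $y=\rho(x)t$ and reduce the double integral to a product of univariate integrals.

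\textbf{Step 1 (polynomiality).} Write $q_k(t)=\sum_{j=0}^k c_j t^j$. Then
\[
\rho(x)^k q_k\!\left(\frac{y}{\rho(x)}\right)=\sum_{j=0}^k c_j\, y^j \rho(x)^{k-j}.
\]
In case (i), $\rho$ is linear, so each term $y^j\rho(x)^{k-j}$ is a polynomial of degree $k$. In case (ii), $w_2$ is even on the symmetric interval $[-\delta,\delta]$. The standard parity argument, via uniqueness of monic OPs under $t\mapsto -t$, gives $q_k(-t)=(-1)^k q_k(t)$. Hence only indices $j\equiv k \pmod 2$ occur, so $k-j$ is even and $\rho^{k-j}=(\rho^2)^{(k-j)/2}$ is a polynomial of degree $\le k-j$. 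In both cases the resulting function of $(x,y)$ is a polynomial of degree $k$, and its $y^k$ coefficient is $c_k\neq 0$. Multiplying by $p_{n-k,k}(x)$ yields a polynomial of total degree exactly $n$. The leading terms $x^{n-k}y^k$ for $0\le k\le n$ are independent, so the $H_{n,k}$ span the polynomials of degree $n$ modulo lower degree. This gives the structure needed to call them bivariate OPs, once orthogonality is shown.

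\textbf{Step 2 (orthogonality).} By Fubini and the change of variables $y=\rho(x)t$, $dy=\rho(x)\,dt$ (valid since $\rho>0$ on $(\alpha,\beta)$), we get
\[
\langle H_{n,k},H_{m,\ell}\rangle_W=\int_\alpha^\beta p_{n-k,k}(x)p_{m-\ell,\ell}(x)\rho(x)^{k+\ell+1}w_1(x)\,dx\int_\gamma^\delta q_k(t)q_\ell(t)w_2(t)\,dt .
\]
The $t$-integral vanishes unless $k=\ell$. When $k=\ell$, the $x$-integral becomes the inner product with weight $\rho^{2k+1}w_1$, which vanishes unless $n-k=m-k$, i.e.\ $n=m$. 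This gives \eqref{eq:Koornip}, with norm equal to the product of the two univariate squared norms. Both factors are positive, so the $H_{n,k}$ are nonzero.

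\textbf{Main obstacle.} The only delicate point is Step 1 in case (ii): the nonpolynomial factor $\rho$ must cancel. This hinges entirely on the parity of $q_k$, which I would establish first via the symmetric recurrence $q_{k+1}(t)=(t-a_k)q_k(t)-b_kq_{k-1}(t)$ with $a_k=0$ for even $w_2$. The integrability assumptions needed to justify Fubini are routine.
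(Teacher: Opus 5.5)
Your proof is correct and follows essentially the route the paper relies on. The paper cites Dunkl--Xu for this result, and in the proof of Theorem~\ref{thm:genKoornwinder} it uses exactly your substitution $t=y/\rho(x)$ to split the inner product into the product of univariate integrals (\ref{twointegrals}), with the parity of $q_k$ in case (ii) guaranteeing polynomiality. Your degree bookkeeping is what turns mutual orthogonality into the degree-graded OP property, as in the paper's remark after Corollary~\ref{cor:Koornspandeg}; one small correction is that the top homogeneous part of $H_{n,k}$ is not the single monomial $x^{n-k}y^k$ but is triangular in the power of $y$, which is all your independence argument needs.
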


The following are well-known special cases of Koornwinder domains: 
\begin{itemize}
	\item the unit square, i.e., $\Omega = [-1, 1]^2$, for which $\rho(x)=1$ and $[\alpha, \beta] = [-1, 1] = [\gamma, \delta]$ in (\ref{eq:Omega});  
	\item the unit disc, for which $\rho(x)=\sqrt{1-x^2}$ and $[\alpha, \beta] = [-1, 1] = [\gamma, \delta]$;  
	\item the triangle $\Omega = \lbrace (x,y) : 0 \leq x \leq 1,0 \leq  y \leq 1 - x \rbrace$, for which  $\rho(x)=1-x$ and $[\alpha, \beta] = [0, 1] = [\gamma, \delta]$.  
\end{itemize}

If $w_1$ and $w_2$ are classical weight functions\footnote{That is, $w_1(x) = (x-\alpha)^a(\beta - x)^b$ and $w_2(x) = (x-\gamma)^c(\delta - x)^d$, for  $a, b, c, d > -1$.} and all the roots of $\rho$ (in case (i)) or $\rho^2$ (in case (ii)) are at $\alpha$ or $\beta$ (the triangle and disc are examples of this), then the univariate OPs $\lbrace p_{n,k} \rbrace$ and $\lbrace q_n\rbrace$ defined  in Theorem~\ref{thm:koornwinderbasis} are classical OPs and thus one obtains bivariate OPs from products of (univariate) classical OPs.  
 Hence we refer to Koornwinder domains (\ref{eq:Omega}) such as the square, disc and triangle, for which only classical OPs can be used to construct bivariate OPs, as classical geometries or classical elements. A characteristic property of classical OPs is that their derivatives are also classical OPs~\cite[\href{https://dlmf.nist.gov/18.9}{Section 18.9}]{NIST}.
This is one of the key properties that enables the construction of sparse spectral methods for solving partial differential equations (PDEs) on classical geometries \cite{DiscOPs,TriangleOPs}.  

Sparse spectral methods are devised using sparse matrix representations of operators (such as the multiplication, change-of-basis and differentiation operators) acting on OPs.  These sparse matrix representations are referred to as operator matrices and will be discussed in Section~\ref{sect:classopmats}.
 Since bivariate Koornwinder polynomials are products of univariate OPs, the entries of the operator matrices of Koornwinder polynomials can be expressed in terms of the entries of operator matrices of univariate OPs.  Hence, on classical 2D geometries, all the operator matrices that are required for a sparse spectral method can be constructed from entries of operator matrices of classical OPs.

If $\rho$ (in case (i)) or $\rho^2$ (in case (ii)) has at least one root off the support of $w_1$, i.e., at least one root in $\mathbb{C}\setminus [ \alpha, \beta]$, then the weight $w_1 \rho^{2k+1}$ of the OPs $\lbrace p_{n,k}  \rbrace$ is not a classical weight but rather a semiclassical weight, as defined below (\ref{Pearson}), which we shall prove in Proposition~\ref{Moment_Recurrence_Relation}.  
  We refer to Koornwinder domains (\ref{eq:Omega}) for which semiclassical OPs are used to construct the Koornwinder polynomial basis, as semiclassical geometries or semiclassical elements.  In~\cite{SnowballOlver}, a sparse spectral method is presented for the following semiclassical geometries:
 \begin{itemize}
	\item disc slices, which have $\rho(x)=\sqrt{1-x^2}$, $[\alpha, \beta] \subset [-1,1]$ and $[\gamma, \delta] = [-1, 1]$ in (\ref{eq:Omega}); 
	\item trapeziums, defined by $\rho(x)=1-c x$, $0 < c < 1$ and $[\alpha, \beta] = [0, 1] = [\gamma, \delta]$  in (\ref{eq:Omega}).  
\end{itemize}

Compared to classical OPs, whose operator matrices are known explicitly, the computation of the operator matrices of semiclassical OPs poses greater challenges.    In~\cite{SnowballOlver}, the Lanczos algorithm, Christoffel--Darboux formula and quadrature are used to compute the operator matrices of the semiclassical OPs $p_{n,k}$ defined in Theorem~\ref{thm:koornwinderbasis}, which in turn are used to compute the operator matrices of the Koornwinder polynomial basis.  This has $\mathcal{O}\left(N^3\right)$ complexity if we let $0 \leq n \leq N$ in (\ref{KoornwinderOPs}), where the number of degrees of freedom are $(N+1)(N+2)/2 \sim N^2/2$.   
By contrast, the approach we shall use to compute the operator matrices has optimal (linear) complexity in the degrees of freedom, i.e., $\mathcal{O}(N^2)$ complexity.
  
One can express the semiclassical weight $w_1 \rho^{2k+1}$ of the OPs $p_{n,k}$ as a product of a classical weight and a \emph{modification function} $\phi$, i.e., $w_1 \rho^{2k+1} = (x - \alpha)^a(\beta - x)^b\phi(x)$, where the zeros or singularities of $\phi$ are in $\mathbb{C}\setminus [\alpha, \beta]$.  
In \cite{Gutleb}, OPs whose weights are products of classical weights and  polynomial or rational modification functions are related to classical OPs via \emph{connection matrices} which are computed in optimal complexity with matrix factorisations, as we shall discuss in Section~\ref{sect:semiclassconnmats}.  This matrix factorisation approach is a more stable alternative to the methods used in~\cite{SnowballOlver} for computing the operator matrices of semiclassical OPs and provides an efficient and quadrature-free way to construct sparse spectral methods on semiclassical geometries \cite{Papadopoulos1}.  
  For OPs with weight modifications that are neither polynomial nor rational, a moment-based algorithm is proposed in \cite{FastGram} to compute connection matrices.  We shall use both the matrix factorisation and moment-based algorithms to compute connection matrices and thereby compute operator matrices in optimal complexity.

In this paper, we extend the collection of 2D geometries bounded by algebraic curves on which one can use Koornwinder polynomials to construct a sparse spectral method.   This will be accomplished via \textit{generalised Koornwinder polynomials} defined on \textit{generalised Koornwinder domains}, as detailed in Section~\ref{sect:generalisedKoornwinder}.   Examples  of PDE solutions on generalised Koornwinder domains are shown in Figure~\ref{fig:approx_solutions_Poisson_Helmholtz}.  
  In Sections~\ref{sect:classopmats} and~\ref{sect:semiclassconnmats}, we describe the computation of operator matrices of the univariate OPs in Koornwinder's formula which are then used in Section \ref{Operator_matrices _Koornwinder_polynomials} to construct the multiplication, differentiation and conversion operator matrices for generalised Koornwinder polynomials and we deduce their sparsity structures. 
Then we consider fast forward and inverse transforms in Section~\ref{sec:fasttransforms}, 
and develop a sparse spectral method for solving PDEs in Section \ref{Koornwinder_Spectral_Methods}.  The \textsc{Julia} code for the sparse spectral method on generalised Koornwinder domains and for reproducing the numerical experiments in this paper is available at \cite{YJJCode}.
      
\begin{figure}[h]
    \centering
    \includegraphics[width=0.345\linewidth]{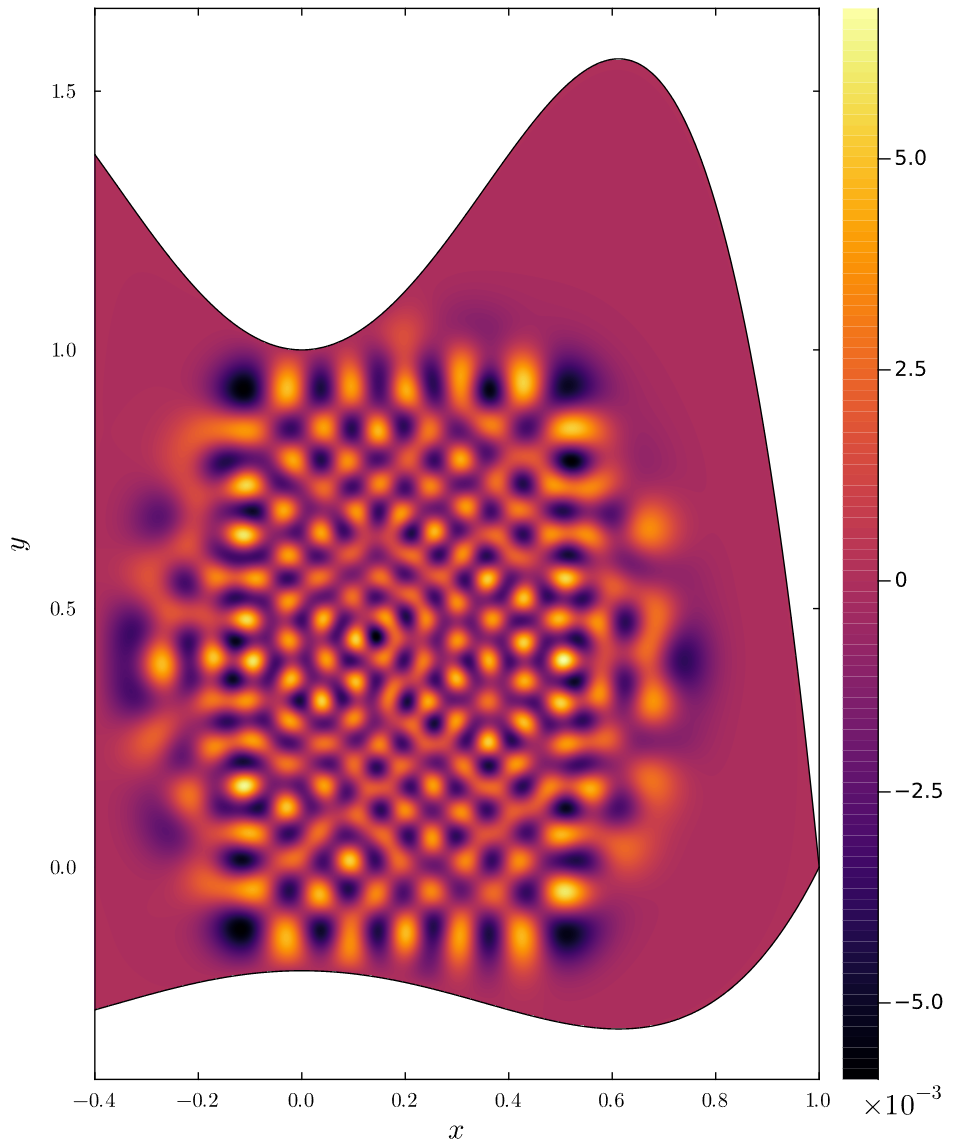}
    \includegraphics[width=0.645\linewidth]{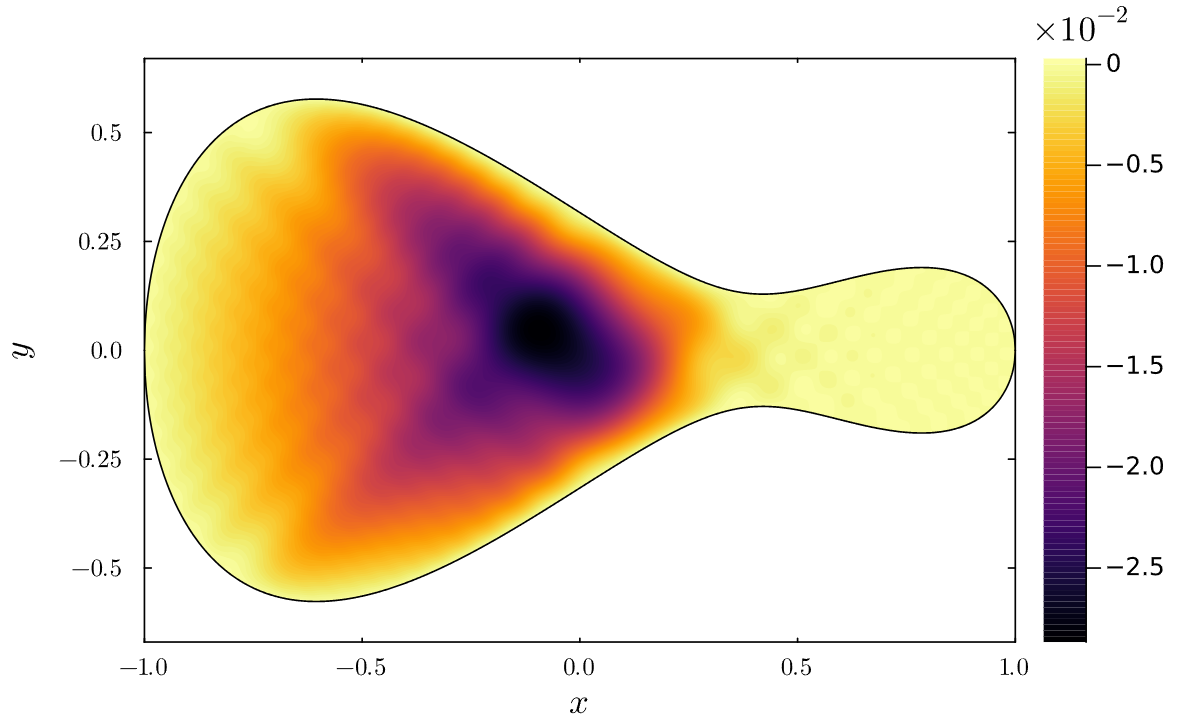}
	\caption{Numerical solutions to a variable-coefficient Helmholtz equation (left) and the Poisson equation (right) computed via the sparse spectral method using generalised Koornwinder polynomials.  These examples are discussed in more detail in Sections~\ref{Helmholtz_Example} and \ref{Poisson_Example_smoothdomain}, respectively.  The generalised Koornwinder domain on the left is defined by (\ref{eq:Omega}) with   \( (\alpha, \beta, \gamma, \delta) = (-0.4, 1, -0.2, 1) \) and \(\rho(x) = 1+3x^2-4x^4\) and the one on the right has \( (\alpha, \beta, \gamma, \delta) = (-1, 1, -1, 1) \) and \(\rho(x) = \sqrt{(1-x^2)(0.1 - 0.4x + 0.5x^2)} \). }
	\label{fig:approx_solutions_Poisson_Helmholtz}
\end{figure}

\begin{remark}
The properties of Koornwinder polynomials enable the natural extension of sparse spectral methods on the above Koornwinder domains to their corresponding three-dimensional geometries, for example, 
from triangles \cite{TriangleOPs} to tetrahedra \cite{TetrahedraOPs},
  from disc slices \cite{SnowballOlver} to spherical caps \cite{SphericalCapOPs}, and from 2D annuli \cite{Papadopoulos1} to their stretched 3D counterparts \cite{gyroscopic}.  The 3D analogues of generalised Koornwinder polynomials are left for future work.
 \end{remark}

 \section{Generalised Koornwinder polynomials and generalised Koornwinder domains}\label{sect:generalisedKoornwinder}

 \begin{defin}
Generalised Koornwinder polynomials are defined by (\ref{KoornwinderOPs})  but without restrictions on the  degree of $\rho$ (in \textup{case (i)}) or $\rho^2$ (in \textup{case (ii)}).  Hence, we let $\rho$ be positive on $(\alpha, \beta)$, where either
\begin{itemize}
		\item[ ]\textup{case (i)}: $\rho$ is a polynomial with $\deg \rho  =: d_0$, or
		\item[] \textup{case (ii)}: $\rho$ is the square root of a polynomial with $\deg \rho^2 =: d_1$, $\delta=-\gamma>0$ and $w_2$ is an even function.
	\end{itemize} 
The associated region $\Omega$, defined in (\ref{eq:Omega}), will be referred to as a generalised Koornwinder domain.
\end{defin}

\begin{theorem}\label{thm:genKoornwinder}
  Generalised Koornwinder polynomials have the following properties:
\begin{itemize}
\item[\textup{(i)}] They are mutually orthogonal, i.e., they satisfy (\ref{eq:Koornip}).
    \item[\textup{(ii)}] In \textup{case (i)}, there exist constants $c_0, \ldots, c_k$ such that 
\begin{equation}
H_{n,k} = c_0 x^{n-k}\rho^k + c_1 x^{n-k}\rho^{k-1}y + \cdots + c_k x^{n-k} y^k +\mathcal{O}\left(x^{\ell_1}\rho^{\ell_2}y^{\ell_3}  \right), \qquad \ell_1 + \ell_2 + \ell_3 < n,  \label{eq:case1koorntomono}
\end{equation}
where $0 \leq \ell_3 \leq k-1$,
and there are constants $c_{m,\ell}$ such that
\begin{equation}
x^{n-k}y^k = 
 \sum_{\ell = 0}^{k} \;\sum_{m=\ell}^{n + (k-\ell)(d_0 - 1)}   c_{m,\ell} H_{m,\ell}, \qquad 0 \leq k \leq n.  \label{eq:case1monotokoorn}
\end{equation}
For \textup{case (ii)}, if $k$ is even, there are constants $c_0, c_2, \ldots, c_k$ such that
\begin{equation}
H_{n,k} = c_0 x^{n-k}\rho^k + c_2 x^{n-k}\rho^{k-2}y^2 + \cdots + c_k x^{n-k} y^k +\mathcal{O}\left(x^{\ell_1}\rho^{\ell_2}y^{\ell_3}  \right), \qquad \ell_1 + \ell_2 + \ell_3 < n,  \label{eq:case2koorntomono}
\end{equation}
where $\ell_3$ is even and $0 \leq \ell_3 \leq k-2$;
if $k$ is odd, there are constants $c_1, c_3, \ldots, c_k$ such that
\begin{equation}
H_{n,k} = c_1 x^{n-k}\rho^{k-1}y + c_3 x^{n-k}\rho^{k-3}y^3 + \cdots + c_k x^{n-k} y^k +\mathcal{O}\left(x^{\ell_1}\rho^{\ell_2}y^{\ell_3}  \right), \qquad \ell_1 + \ell_2 + \ell_3 < n,  \label{eq:case2koorntomonokodd}
\end{equation}
where $\ell_3$ is odd and $1 \leq \ell_3 \leq k-2$.  In \textup{case (ii)}, there are constants $c_{m,\ell}$ such that
\begin{equation}
x^{n-k}y^k = \sum_{\substack{\ell = 0 \\ \mathrm{mod}(\ell,2)\, =\, \mathrm{mod}(k,2)}}^{k}  \sum_{m = \ell}^{n + (k-\ell)(d_1 - 2)/2}\:  c_{m,\ell} H_{m,\ell}, \qquad 0 \leq k \leq n. \label{eq:case2monotokoorn}
\end{equation}
\end{itemize}
\end{theorem}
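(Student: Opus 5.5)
For part (i) I would redo the classical Koornwinder argument, since it never uses the degree of $\rho$. Substitute $y=\rho(x)t$, so $\mathrm{d}y=\rho(x)\,\mathrm{d}t$, $t\in[\gamma,\delta]$ and $W=w_1(x)w_2(t)$. The inner product then separates as
\[
\langle H_{n,k},H_{m,\ell}\rangle_W=\int_\alpha^\beta p_{n-k,k}(x)p_{m-\ell,\ell}(x)\rho(x)^{k+\ell+1}w_1(x)\,\mathrm{d}x\int_\gamma^\delta q_k(t)q_\ell(t)w_2(t)\,\mathrm{d}t .
\]
The $t$-integral vanishes unless $k=\ell$. When $k=\ell$, the $x$-integral is the $L^2(\rho^{2k+1}w_1)$ inner product of $p_{n-k,k}$ and $p_{m-k,k}$, which vanishes unless $n=m$. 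Positivity of $\rho$ on $(\alpha,\beta)$ is all that is needed for the weights to be valid.

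For the expansions (\ref{eq:case1koorntomono})--(\ref{eq:case2koorntomonokodd}), I would write $q_k(t)=\sum_{j\le k}a_jt^j$, so that $\rho^kq_k(y/\rho)=\sum_j a_j\rho^{k-j}y^j$. Multiplying by $p_{n-k,k}(x)=b\,x^{n-k}+(\text{lower powers of }x)$ gives the top terms $x^{n-k}\rho^{k-j}y^j$. Every remaining term has $x$-power below $n-k$, so its total "weighted degree" $\ell_1+\ell_2+\ell_3$ (counting $\rho$ as one unit) is less than $n$. That gives the stated form. In case (ii), evenness of $w_2$ makes $q_k$ have the parity of $k$, so only $j\equiv k \pmod 2$ appear. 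Here $\rho^{k-j}$, with $k-j$ even, is a genuine polynomial in $x$ since $\rho^2$ is.

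For the reverse expansions (\ref{eq:case1monotokoorn}) and (\ref{eq:case2monotokoorn}), I would induct on $k$. Write $y^k=\rho^k (y/\rho)^k$ and expand $t^k=\sum_{\ell\le k}e_\ell q_\ell(t)$, with only $\ell\equiv k$ in case (ii). This gives
\[
x^{n-k}y^k=\sum_\ell e_\ell\, x^{n-k}\rho^{k-\ell}\,\rho^\ell q_\ell(y/\rho).
\]
Next, $f_\ell(x):=x^{n-k}\rho(x)^{k-\ell}$ is a polynomial in $x$. In case (ii) this uses $k-\ell$ even. Its degree is $n-k+(k-\ell)d_0$ in case (i), and $n-k+(k-\ell)d_1/2$ in case (ii). Expand $f_\ell=\sum_{r\le \deg f_\ell}g_r\,p_{r,\ell}$ in the basis $\{p_{r,\ell}\}_r$. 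Each term $p_{r,\ell}(x)\rho^\ell q_\ell(y/\rho)$ equals $H_{r+\ell,\ell}$. So $m=r+\ell$ ranges from $\ell$ up to $n-k+\ell+(k-\ell)d_0=n+(k-\ell)(d_0-1)$, and in case (ii) up to $n+(k-\ell)(d_1-2)/2$, exactly as stated. No induction is actually needed; the direct expansion suffices.

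The main subtlety is not computational but interpretive. One has to check that $\rho^{k-\ell}$ is polynomial in case (ii), which the parity restriction ensures. One also has to make precise the meaning of $\mathcal{O}(x^{\ell_1}\rho^{\ell_2}y^{\ell_3})$ with $\ell_3\le k-1$ (respectively $k-2$). This last point requires noting that the lower-order terms come only from lower $x$-powers of $p_{n-k,k}$, times the same $\rho^{k-j}y^j$ factors with $j\le k$. I would therefore state that bound carefully rather than literally $\ell_3\le k-1$, and flag that the top-$y$ monomial $y^k$ can also appear in the remainder unless one interprets it as the paper intends.
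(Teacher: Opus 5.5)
Your proof is correct. Part (i) and the forward expansions (\ref{eq:case1koorntomono})--(\ref{eq:case2koorntomonokodd}) go as in the paper, but you prove the reverse expansions by a different route. The paper starts from the orthogonal expansion (\ref{eq:monoexpansion}) of $x^{n-k}y^k$, with $c_{m,\ell}=\langle x^{n-k}y^k,H_{m,\ell}\rangle_W/\|H_{m,\ell}\|_W^2$. Via $t=y/\rho$ it factors this inner product as in (\ref{twointegrals}) into $\int_\gamma^\delta t^k q_\ell w_2\,\mathrm{d}t$ times $\int_\alpha^\beta \rho^{k-\ell}x^{n-k}p_{m-\ell,\ell}\rho^{2\ell+1}w_1\,\mathrm{d}x$. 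It then reads off from orthogonality and parity when each factor vanishes. Your substitution $y^k=\rho^k t^k$, followed by expanding $t^k$ in $\{q_\ell\}$ and $x^{n-k}\rho^{k-\ell}$ in $\{p_{r,\ell}\}_r$, is the same computation done algebraically. The two integrals are exactly $e_\ell\|q_\ell\|^2$ and $g_{m-\ell}\|p_{m-\ell,\ell}\|^2$, so your coefficients coincide with the paper's $c_{m,\ell}$.

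Your version exhibits the monomial directly as a finite combination of the $H_{m,\ell}$, so the spanning claim needs nothing more. The paper's closing step, that the expansion ``is equal to the monomial'', needs an extra justification: completeness of $\{H_{m,\ell}\}$ in $L^2_W$, or precisely your decomposition. The paper's inner-product formulation, in turn, is the template it reuses for the sparsity results in Propositions~\ref{M_y}, \ref{Diff_matrix} and \ref{structure_conversion_mat}.

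Your caveat about the remainder is also right. Terms such as $x^{n-k-1}y^k$ generally appear in it, so the restriction should read $\ell_3\le k$ rather than $\ell_3\le k-1$ (respectively $k-2$ in case (ii)). This does not affect the later use in Appendix~\ref{AppendixB}, since only the maximal $x$-degree attached to each power of $y$ matters there.
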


\begin{proof}
The orthogonality of the generalised Koornwinder polynomials (property (i))  can be demonstrated as for the standard Koornwinder polynomials in Theorem~\ref{thm:koornwinderbasis}, see \cite[Proposition 2.6.1]{DunklXu}.

To prove property (ii), we observe that equations (\ref{eq:case1koorntomono}), (\ref{eq:case2koorntomono}) and (\ref{eq:case2koorntomonokodd}) follow immediately from the definition of the generalised Koornwinder polynomials  and the fact that the OPs $\lbrace q_k \rbrace$ have the same parity as $k$ in case (ii).  

To prove (\ref{eq:case1monotokoorn}) and (\ref{eq:case2monotokoorn}), we consider the expansion
\begin{equation}
\sum_{m \geq 0} \sum_{\ell=0}^{m} c_{m,\ell} H_{m,\ell}, \qquad c_{m,\ell} = \frac{\langle x^{n-k}y^{k}, H_{m,\ell}\rangle_{W}}{\langle H_{m,\ell}, H_{m,\ell}\rangle_{W}}. \label{eq:monoexpansion}
\end{equation}
 We have that
\begin{eqnarray}
\left\langle x^{n-k}y^k, H_{m,\ell}   \right\rangle_W &=& \int_{\gamma \rho}^{\delta\rho}\int_{\alpha}^{\beta} x^{n-k}y^k p_{m-\ell,\ell}(x) \rho^{\ell} q_{\ell}(y/\rho) w_1(x) w_2(y/\rho)\, \mathrm{d}x\mathrm{d}y  \nonumber \\
 &=& \int_{\gamma }^{\delta} t^k q_{\ell}(t) w_2(t)\, \mathrm{d}t \int_{\alpha}^{\beta} \rho^{k-\ell} x^{n-k} p_{m-\ell,\ell}(x) \rho^{2\ell+1}  w_1(x)\, \mathrm{d}x. \label{twointegrals}
\end{eqnarray}
By orthogonality, the first integral in~(\ref{twointegrals}) is zero (and therefore also $c_{m,\ell}$ in (\ref{eq:monoexpansion})) if $\ell > k$.  In case (ii), we have that $\gamma = -\delta$, $w_2$ is even  and $q_{\ell}$ has the same parity as $\ell$, hence the first integral vanishes if $\ell$ and $k$ have opposite parity.  Hence,  $0 \leq \ell \leq k$ in (\ref{eq:monoexpansion}) and (\ref{twointegrals}) and additionally in case (ii), $\ell$ and $k$ have the same parity.

The second integral in (\ref{twointegrals}) vanishes due to orthogonality if $m-\ell > \deg(\rho^{k-\ell}x^{n-k})$. In case (i), this implies that the second integral is zero if $m > n + (k-\ell)(d_0-1)$ and in case (ii), if $m >  n + (k-\ell)(d_1-2)/2$.  Hence, in (\ref{eq:monoexpansion}) we have $m \leq n + (k-\ell)(d_0-1)$ in case (i) and in case (ii), $m \leq  n + (k-\ell)(d_1-2)/2$.  Now it is straightforward to show, using the orthogonality of the generalised Koornwinder polynomials, that the expansion (\ref{eq:monoexpansion}) is equal to the monomial $x^{n-k}y^k$ and hence (\ref{eq:case1monotokoorn}) and (\ref{eq:case2monotokoorn}) have been proved.

               \end{proof}

\begin{corollary}\label{cor:Koornspandeg}
An immediate  consequence of (\ref{eq:case1monotokoorn}) and (\ref{eq:case2monotokoorn}) is that the generalised Koornwinder polynomials span the space of bivariate polynomials for any $d_0, d_1 \geq 0$.  Furthermore, it follows from (\ref{eq:case1koorntomono}) that in \textup{case (i)},
\begin{equation}
		\deg H_{n,k} = \begin{cases}
		n & \text{if } d_0 \leq 1 \\[0.5em]
		n + k(d_0-1) & \text{if } d_0 \geq 2
		\end{cases}, \label{eq:Koorndeg1}
	\end{equation}
and from (\ref{eq:case2koorntomono}) and (\ref{eq:case2koorntomonokodd}) we have in \textup{case (ii)} that
\begin{equation}
		\deg H_{n,k} = \begin{cases}
		n & \text{if } d_1 \leq 2 \\[0.5em]
		n +  \left\lfloor \frac{k}{2} \right\rfloor \left(d_1 -2 \right) & \text{if } d_1 \geq 3  
		\end{cases}.  \label{eq:Koorndeg2}
	\end{equation}
\end{corollary}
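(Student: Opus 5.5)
The plan is to prove the spanning statement from (\ref{eq:case1monotokoorn}) and (\ref{eq:case2monotokoorn}), and the degree formulas by locating the top-degree homogeneous part of $H_{n,k}$ via the expansions (\ref{eq:case1koorntomono}), (\ref{eq:case2koorntomono}) and (\ref{eq:case2koorntomonokodd}). The main obstacle is showing that the coefficient $c_0$ (or $c_1$) that should produce the top degree is nonzero.

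\emph{Spanning.} First we check that each $H_{n,k}$ is a genuine bivariate polynomial. In case (i) this is clear. In case (ii), write $q_k(t)=\sum_j a_j t^j$. Since $q_k$ has the parity of $k$, only $j\equiv k \pmod 2$ occurs, so each term $\rho^{k-j}y^j=(\rho^2)^{(k-j)/2}y^j$ is a polynomial. Next, (\ref{eq:case1monotokoorn}) and (\ref{eq:case2monotokoorn}) write every monomial $x^{n-k}y^k$ as a \emph{finite} linear combination of the $H_{m,\ell}$. Since monomials span all bivariate polynomials, so do the $H_{n,k}$.

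\emph{Degrees.} Write
\[
H_{n,k}=p_{n-k,k}(x)\sum_j a_j\rho^{k-j}y^j, \qquad a_k\neq 0 .
\]
In case (i), the term with index $j$ has total degree $(n-k)+d_0(k-j)+j$, and its top homogeneous part is
\[
\mathrm{lc}(p_{n-k,k})\,a_j\,\mathrm{lc}(\rho)^{k-j}\,x^{n-k+d_0(k-j)}y^j .
\]
Distinct $j$ give distinct monomials, so the top-degree part of $H_{n,k}$ cannot cancel across different $j$.
\begin{itemize}
\item If $d_0\le 1$, every term has degree at most $n$. The $j=k$ term contributes $x^{n-k}y^k$ with nonzero coefficient, so $\deg H_{n,k}=n$. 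This includes $d_0=0$, where $\rho$ is constant.
\item If $d_0\ge 2$, the degree $(n-k)+d_0(k-j)+j$ is strictly decreasing in $j$. The maximum $n+k(d_0-1)$ is therefore attained only at $j=0$, with coefficient proportional to $a_0=q_k(0)$, which is the constant $c_0$ in (\ref{eq:case1koorntomono}).
\end{itemize}

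Case (ii) is analogous with $\rho^{k-j}=(\rho^2)^{(k-j)/2}$, so the $j$-th term has degree $(n-k)+d_1(k-j)/2+j$.
\begin{itemize}
\item For $d_1\le 2$ the maximum is $n$, attained by $x^{n-k}y^k$.
\item For $d_1\ge 3$ the degree is strictly decreasing in $j$. The maximum occurs at the smallest admissible $j$: $j=0$ for $k$ even and $j=1$ for $k$ odd. This gives $n+\lfloor k/2\rfloor(d_1-2)$ in both cases (for $k$ odd, $(n-k)+d_1(k-1)/2+1=n+\tfrac{k-1}{2}(d_1-2)$).
\end{itemize}

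\emph{Nonvanishing of $c_0$ or $c_1$.} In case (ii) this follows from symmetry. The zeros of $q_k$ are simple, lie in $(\gamma,\delta)=(-\delta,\delta)$, and are symmetric about $0$.
\begin{itemize}
\item For $k$ even, the zeros come in $\pm$ pairs and number $k$; a simple zero at $0$ would make the count odd, so $q_k(0)\neq0$.
\item For $k$ odd, $0$ is a simple zero, so $a_1=q_k'(0)\neq 0$.
\end{itemize}

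In case (i) with $d_0\ge 2$, I would try to argue the same way, but in general one only has that the zeros of $q_k$ lie in $(\gamma,\delta)$. So $q_k(0)\neq0$ is guaranteed when $0\notin(\gamma,\delta)$, e.g.\ $\gamma=0$ as in the triangle-type examples. Otherwise I would state (\ref{eq:Koorndeg1}) under the implicit hypothesis $c_0=q_k(0)\neq 0$ from (\ref{eq:case1koorntomono}). In the exceptional case the degree is $n-k+\max\{d_0(k-j)+j : a_j\neq 0\}$, which is still at most $n+k(d_0-1)$.
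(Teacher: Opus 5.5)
Your proposal is correct and follows the paper's implicit argument. Spanning comes from the finite monomial expansions (\ref{eq:case1monotokoorn}) and (\ref{eq:case2monotokoorn}), and the degrees are read off from the top-degree terms in (\ref{eq:case1koorntomono})--(\ref{eq:case2koorntomonokodd}). Your parity/simple-zero argument also supplies something the paper takes for granted: in case (ii) the leading coefficient is nonzero, i.e.\ $q_k(0)\neq 0$ for even $k$ and $q_k'(0)\neq 0$ for odd $k$.

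Your caveat in case (i) is a genuine defect of the statement, not a gap in your proof. Case (i) allows $\gamma=-\delta$ with $w_2$ even. Take, e.g., $\rho=1+x^2$ on $[-1,1]$ with Legendre $q_k$. Then $q_1(t)\propto t$, so $H_{n,1}\propto p_{n-1,1}(x)\,y$ has degree $n$ rather than $n+1$. More generally, when $d_0\geq 2$, $\gamma=-\delta$ and $w_2$ is even, every odd $k$ gives $\deg H_{n,k}=n+(k-1)(d_0-1)$. So (\ref{eq:Koorndeg1}) needs the extra hypothesis $q_k(0)\neq 0$, which is automatic when $0\notin(\gamma,\delta)$. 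The upper bound $n+k(d_0-1)$ that you note always holds.
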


In order to simplify notation and unify the treatment of cases (i) and (ii), we introduce the indicator parameter $\eta$, which is defined to be 
\begin{equation*}
\eta = \begin{cases}
0 & \text{in case (i)}, \\
1 & \text{in case (ii)}
\end{cases}.
\end{equation*}
Hence, we have that $d_{\eta} = \deg \rho^{1 + \eta}$. 

Note from (\ref{eq:Koorndeg1}) and (\ref{eq:Koorndeg2}) that if $d_{\eta} \leq 1 + \eta$ (precisely the degree restrictions that apply in Theorem~\ref{thm:koornwinderbasis}), then $\deg H_{n,k} = n$ and in this case the Koornwinder polynomials are bivariate OPs.  If $d_{\eta} \geq 2 + \eta$, then we have from (\ref{eq:Koorndeg1}) and (\ref{eq:Koorndeg2}) that $\deg H_{n,k} > n$ for $1+\eta \leq k \leq n$ and therefore even though the generalised Koornwinder polynomials are orthogonal, they are not bivariate OPs because they are not degree-graded.
    
The methods in this paper are applicable for any degrees $d_0, d_1$.  However, the cases mentioned in Section~\ref{sect:intro} for which Koornwinder polynomials are bivariate OPs, for which $d_{\eta} \leq 1 + \eta$, have already been used in the literature to devise sparse spectral methods, using explicit formul\ae\ for recurrence relationships, see \cite{TriangleOPs,Acta}.  Thus, henceforth we assume $d_{\eta} \geq 2 + \eta$.

A special class of generalised Koornwinder domains is smooth domains (domains without corners) which is only possible  
in case (ii) if $\rho$ takes the form $\rho(x) = \sqrt{(x-\alpha)(\beta-x)\phi(x)}$, where $\phi$ is a polynomial whose roots are off the interval $[\alpha, \beta]$ (an example of which is the right frame of Figure~\ref{fig:approx_solutions_Poisson_Helmholtz}).  Smooth domains will be discussed separately in Appendix~\ref{Generalised_Koornwinder_polynomials_smooth_domains}.

\subsection{A four-parameter family of generalised Koornwinder polynomials} \label{Computations_with_univariate_semiclassical_OPs}

As will be shown in Section~\ref{Operator_matrices _Koornwinder_polynomials}, in order to construct a sparse spectral method on generalised Koornwinder domains $\Omega$, we need a four-parameter family of generalised Koornwinder polynomials that is orthogonal with respect to the weight
\begin{equation}
	W^{(a,b,c,d)}(x,y):=  (\beta-x)^a (x-\alpha)^b (y - \gamma\rho(x))^c (\delta \rho(x) - y)^d, \qquad (x,y) \in \Omega.    \label{eq:Wdef}
\end{equation}

First, we introduce notation for two families of univariate OPs:
let $w_R^{(a,b,c)}(x)$ and $w_P^{(a,b)}(x)$ be  weight functions supported on $[\alpha, \beta]$ and $ [\gamma, \delta]$, respectively, given by 
\begin{align}
	w_R^{(a,b,c)}(x):=(\beta-x)^a (x-\alpha)^b \rho(x)^c, \quad w_P^{(a,b)}(x):=(\delta - x)^{a} (x-\gamma)^{b}, \quad a, b, c > -1,  \label{eq:wRwPdef}
\end{align}
and define the associated inner products by
 \begin{align}
		\langle p, q \rangle_{w_{R}^{(a,b,c)}} := \int_{\alpha}^{\beta} p(x) q(x) w_{R}^{(a,b,c)}(x) \, \mathrm{d}x, \quad \langle p, q \rangle_{w_{P}^{(a,b)}} := \int_{\gamma}^{\delta} p(y) q(y) w_{P}^{(a,b)}(y) \, \mathrm{d}y. \label{inner_prod_PR}
\end{align}
We denote by \( \{R_n^{(a,b,c)}\} \) the semiclassical orthonormal polynomials
  on \( [\alpha, \beta] \), with respect to the first inner product in~(\ref{inner_prod_PR})\footnote{A well-known family of semiclassical OPs is the Jacobi semiclassical OPs (introduced in~\cite{magnus1995painleve} and extensively used in~\cite{Papadopoulos1}), with weight $x^b(1-x)^a(t-x)^c$ on $[0, 1]$, where $t > 1$, hence the semiclassical Jacobi polynomials correspond to $R_n^{(a,b,c)}$ with $[\alpha, \beta] = [0, 1]$ and $\rho = t - x$. }. 
Likewise, \( \{\widetilde{P}_n^{(a,b)}\} \) denotes the shifted-and-normalised Jacobi polynomials on \( [\gamma, \delta] \), with respect to the second inner product in~(\ref{inner_prod_PR}) (for the definition of the standard Jacobi polynomials, see \cite[\href{https://dlmf.nist.gov/18.3}{section 18.3}]{NIST}).

We introduce the following  notation  
 for a row-vector containing univariate OPs:
\begin{align*}
	\mathbf{R}^{(a,b,c)}&=\mathbf{R}^{(a,b,c)}(x)=\left(\begin{array}{c}R_{0}^{(a,b,c)}(x)\:\Big \vert\:  R_{1}^{(a,b,c)}(x)\:\Big \vert\: R_{2}^{(a,b,c)}(x) \:\Big \vert\: \cdots\end{array}\right),\\[1em]
	\mathbf{\widetilde{P}}^{(a,b)}&=\mathbf{\widetilde{P}}^{(a,b)}(x)=\left(\begin{array}{c}\widetilde{P}_{0}^{(a,b)}(x)\:\Big \vert\: \widetilde{P}_{1}^{(a,b)}(x) \:\Big \vert\:  \widetilde{P}_{2}^{(a,b)}(x)\:\Big \vert\: \cdots\end{array}\right).
\end{align*}
These can also be interpreted as quasi-matrices, generalisations of matrices where the first argument is continuous, and provide a convenient framework for linking recurrence relations to structured numerical linear algebra \cite{Acta}. 

Noting that 
$$W^{(a,b,c,d)}(x,y) = w^{(a,b,c+d)}_R(x) \,  w^{(d,c)}_P\left(\frac{y}{\rho(x)}\right), $$
hence $w^{(a,b,c+d)}_R$ corresponds to $w_1$ in Theorem~\ref{thm:koornwinderbasis} and $w^{(d,c)}_P$ corresponds to $w_2$, it follows from  Theorems~\ref{thm:koornwinderbasis} and~\ref{thm:genKoornwinder} that 
  	\begin{equation}\label{KoornwinderFomula}
		H_{n,k}^{(a,b,c,d)}(x,y) := R_{n-k}^{(a, b, c+d+2k+1)}(x) \, \rho(x)^k \, \widetilde{P}_k^{(d,c)}\left(\frac{y}{\rho(x)}\right), \quad 0 \leq k \leq n, \quad (x,y) \in \Omega, 
	\end{equation}
constitutes a four-parameter family of generalised Koornwinder polynomials that are mutually orthogonal (and orthonormal) with respect to the inner product
\begin{equation}
	\langle f, \, g \rangle_{W^{(a,b,c,d)}} := \iint_\Omega f(x,y) \, g(x,y) \, W^{(a,b,c,d)}(x,y) \, \mathrm{d}y \, \mathrm{d}x.  \label{eq:inproddef}
\end{equation}
In case (ii), to ensure $w_P^{(d,c)}(x)$ is an even function, we set $d = c$ and denote the weight as $w_P^{(c)}(x) := w_P^{(c,c)}(x)=(\delta^2 - x^2)^{c}$.

The block-quasi-matrix of the Koornwinder polynomials on $\Omega$ is given by
\begin{equation*}
	\mathbf{H}^{(a,b,c,d)}=\mathbf{H}^{(a,b,c,d)}(x,y)=\left(\begin{array}{c}\mathbb{H}_0^{(a,b,c,d)}(x, y)\: \Big \vert \: \mathbb{H}_1^{(a,b,c,d)}(x, y) \: \Big \vert \: \mathbb{H}_2^{(a,b,c,d)}(x,y) \: \Big \vert \: \cdots\end{array}\right)
\end{equation*}
where each block $\mathbb{H}_n^{(a,b,c,d)}$ is given by 
\begin{equation*}
	\mathbb{H}_n^{(a,b,c,d)}(x,y)=\left(H_{n,0}^{(a,b,c,d)}(x,y)\: \Big \vert \: H_{n,1}^{(a,b,c,d)}(x,y) \: \Big \vert \: \cdots \: \Big \vert \: H_{n,n}^{(a,b,c,d)}(x,y)\right).
\end{equation*}

\section{Operator matrices of classical OPs}\label{sect:classopmats}

The orthonormalised Jacobi polynomials $\{\widetilde{P}_n^{(d, c)}\}$ on $(\gamma, \delta)$ can be obtained via an affine transformation from the standard Jacobi polynomials on $(-1,1)$. The corresponding raising relation, three-term recurrence and differentiation formula follow from those of the standard Jacobi polynomials given in \cite[\href{https://dlmf.nist.gov/18.9}{section 18.9}]{NIST}:
\begin{align}
	&\widetilde{P}^{(d,c)}_{n}\left(y\right) = r^{(d+1,c+1)}_{n}\,\widetilde{P}^{(d+1,c+1)}_{n}\left(y\right) - t^{(d+1,c+1)}_{n-1}\,\widetilde{P}^{(d+1,c+1)}_{n-1}\left(y\right) -s^{(d+1,c+1)}_{n-2}\,\widetilde{P}^{(d+1,c+1)}_{n-2}\left(y\right), \label{ClassicalRaising}\\
	&y \widetilde{P}_n^{(d, c)}(y) = \delta_n^{(d,c)} \widetilde{P}_{n+1}^{(d, c)}(y) + \gamma_n^{(d,c)} \widetilde{P}_n^{(d, c)}(y) + \delta_{n-1}^{(d,c)} \widetilde{P}_{n-1}^{(d, c)}(y),  \label{eq:jacthreeterm}\\
	&\frac{\mathrm{d}}{\mathrm{d}y} \widetilde{P}_n^{(d, c)}(y) = d^{(d+1,c+1)}_{n-1}\, \widetilde{P}^{(d+1,c+1)}_{n-1}\left(y\right). \label{ClassicalDerivative} 
\end{align}
In case (ii), we have that $c=d$ and $\gamma_n^{(d,c)} = 0, n = 0,1,2,\ldots$.  Equations (\ref{ClassicalRaising})--(\ref{ClassicalDerivative}) can  be expressed in quasi-matrix notation as follows,
\begin{equation}
	\mathbf{\widetilde{P}}^{(d,c)} = \mathbf{\widetilde{P}}^{(d+1,c+1)} \, T(w_P^{(d,c)}), \quad y \,\mathbf{\widetilde{P}}^{(d,c)} = \mathbf{\widetilde{P}}^{(d,c)} \, J(w_P^{(d,c)}), \quad  \frac{\mathrm{d}}{\mathrm{d}y} \mathbf{\widetilde{P}}^{(d,c)} = \mathbf{\widetilde{P}}^{(d+1,c+1)} \, D(w_P^{(d,c)}),\label{eq:jacmats}
\end{equation}
where $T(w_P^{(d,c)})$, $J(w_P^{(d,c)})$ and $D(w_P^{(d,c)})$ are, respectively, the raising, Jacobi and differentiation matrices of $\{\widetilde{P}_n^{(d, c)}\}$, which are matrix representations of raising, multiplication and differentiation operators acting on $\mathbf{\widetilde{P}}^{(d,c)}$ and hence we shall refer to these matrices collectively as operator matrices.

In this paper, the element in the $(i+1)$-th row and $(j+1)$-th column of a matrix $A$ is denoted by $A_{[i,j]}$.
 For simplicity, we allow $i$ and $j$ to take negative values, in which case $A_{[i,j]}$ is defined to be zero.
We say that $A$ is banded and has bandwidths $(\sigma , \mu)$ if $A_{[i,j]}=0$ for $i-j>\sigma $ and $j-i>\mu$, and the same applies to block matrices. Hence, the tridiagonal Jacobi matrix $J(w_P^{(d,c)})$ has bandwidths $(1,1)$, $T(w_P^{(d,c)})$ has bandwidths $(0,2)$, and $D(w_P^{(d,c)})$ has bandwidths $(-1,1)$.  If the matrices $A$ and $B$ have bandwidths $(\sigma_1, \mu_1)$ and $(\sigma_2, \mu_2)$, respectively, then $AB$ has bandwidths at most $(\sigma_1 + \sigma_2, \mu_1 + \mu_2)$.

Note that all these operator matrices can be expressed as the inner product of quasi-matrices. For example, we write
\begin{equation}
	T(w_P^{(d,c)}) = \left\langle  \mathbf{\widetilde{P}}^{(d+1,c+1)}, \mathbf{\widetilde{P}}^{(d,c)}  \right\rangle_{w_P^{(d+1,c+1)}} = \int_{\gamma}^{\delta} {\mathbf{\widetilde{P}}^{(d+1,c+1)}(x)}^\top\, \mathbf{\widetilde{P}}^{(d,c)}(x)\, w_P^{(d+1,c+1)}(x)\, \mathrm{d}x,  \label{eq:jacraise}
\end{equation}
where ${\mathbf{\widetilde{P}}^{(d+1,c+1)}(x)}^\top\, \mathbf{\widetilde{P}}^{(d,c)}(x)$ is interpreted as an outer product of two (infinite) row vectors whose entries
are functions and integration is performed entry-wise, hence 
\begin{equation*}
\left(T(w_P^{(d,c)})\right)_{[i,j]} = \langle \widetilde{P}^{(d+1,c+1)}_i,  \widetilde{P}^{(d+1,c+1)}_j \rangle_{w_P^{(d+1,c+1)}}.
\end{equation*}
  The identity (\ref{eq:jacraise}) follows from taking the inner product of both sides of the first equation in (\ref{eq:jacmats}) with $\mathbf{\widetilde{P}}^{(d+1,c+1)}$ and using the fact that since the OPs $\{ \widetilde{P}_n^{(d+1,c+1)} \}$ are orthonormal, $\langle  \mathbf{\widetilde{P}}^{(d+1,c+1)}, \mathbf{\widetilde{P}}^{(d+1,c+1)}  \rangle_{w_P^{(d+1,c+1)}} = I$, where $I$ is the (infinite) identity matrix.

More generally, for any linear operator $\mathcal{L}$ mapping the columns of $\mathbf{\widetilde{P}}^{(d,c)}$ to $L_{w_P^{(\delta,\gamma)}}^2$, its matrix representation $M$ in the orthonormal basis $\mathbf{\widetilde{P}}^{(\delta,\gamma)}$ can be deduced by expanding  ${\cal L} \widetilde{P}{(d,c)}$ in the basis  corresponding to $\mathbf{\widetilde{P}}^{(d,c)}$, i.e.
\begin{equation}
\mathcal{L} \mathbf{\widetilde{P}}^{(d,c)} = \mathbf{\widetilde{P}}^{(\delta,\gamma)} M  \qquad \Rightarrow \qquad M = \left\langle  \mathbf{\widetilde{P}}^{(\delta,\gamma)}, \mathcal{L}\mathbf{\widetilde{P}}^{(d,c)}  \right\rangle_{w_P^{(\delta,\gamma)}}.  \label{eq:linopmat}
\end{equation}
Note any operator that maps polynomials to polynomials (e.g., differentiation) will satisfy this criteria.

\section{Operator matrices of semiclassical OPs via connection matrices}\label{sect:semiclassconnmats}

To compute the operator matrices associated with the semiclassical OPs $\mathbf{ R}^{(a,b,c+d+2k+1)}$ that are required to construct the generalised Koornwinder polynomials (\ref{KoornwinderFomula}), we first need to compute the connection matrix $\mathcal{C}^{(a,b,c+d+2k+1)}$ that relates the quasi-matrix of shifted-and-normalised Jacobi polynomials with weight $w^{(a,b)}_S(x)$ on $[\alpha, \beta]$, namely $\mathbf{S}^{(a,b)}$, to $\mathbf{ R}^{(a,b,c+d+2k+1)}$, i.e.,  
\begin{align}\label{ConnMatSR}
	\mathbf{S}^{(a,b)} = \mathbf{R}^{(a,b,c+d+2k+1)} \,\mathcal{C}^{(a,b,c+d+2k+1)}.
\end{align}
An equivalent expression of (\ref{ConnMatSR}) is
\begin{equation*}
{S}_n^{(a,b)} = \sum_{k = 0}^{n}  \mathcal{C}^{(a,b,c+d+2k+1)}_{[k,n]}  {R}_k^{(a,b,c+d+2k+1)}
\end{equation*}
since (\ref{ConnMatSR}) is a change of polynomial basis.  Hence, in general, a connection matrix that relates different OP families is upper triangular, in which case it has bandwidths $(0,\infty)$, however we shall find that the connection matrix in (\ref{ConnMatSR}) is banded.

We note that the weight $w^{(a,b,c+d+2k+1)}_R$ can be expressed as the product of a classical  weight $w^{(a,b)}_S$ and a measure modification function $\rho^{c+d+2k+1}$,
\begin{equation}
w^{(a,b,c+d+2k+1)}_R(x) =  
(\beta-x)^a (x-\alpha)^b \rho^{c+d+2k+1} = w^{(a,b)}_S(x) \rho^{c+d+2k+1}.  \label{eq:factorweight}
\end{equation}
 Without loss of generality, we assume that $\rho$ (in case (i)) or $\rho^2$ (in case (ii)) has no roots at the endpoints $\alpha$ and $\beta$, i.e., all the roots are in $\mathbb{C}\setminus [\alpha, \beta]$.  Otherwise, the root(s) at $\alpha$ or $\beta$ can be incorporated into the classical weight. 
              Since 
\begin{equation}
\mathcal{C}^{(a,b,c+d+2k+1)}_{[i,j]} = \left\langle   R_i^{(a,b,c+d+2k+1)}, S_j^{(a,b)} \right\rangle_{w_R^{(a,b,c+d+2k+1)}} =  \left\langle   \rho^{c+d+2k+1}R_i^{(a,b,c+d+2k+1)}, S_j^{(a,b)} \right\rangle_{w_S^{(a,b)}},  \label{eq:semiclassconmatentries}
\end{equation}
it follows from the definition of orthogonality that if the modification function $\rho^{c+d+2k+1}$ is a polynomial, then the connection matrix $\mathcal{C}^{(a,b,c+d+2k+1)}$ has bandwidths $(0,\deg \rho^{c+d+2k+1})$.  We shall let $a, b, c, d$ be non-negative integers in our numerical experiments, hence in case (i), the modification function is a polynomial whereas in case (ii), with $c = d$, the modification function $\rho^{2c + 2k + 1}$  is not a polynomial.  
 However, since $\rho^2$ is a positive polynomial on $[\alpha, \beta]$ in case (ii), $\rho$ is analytic on $[\alpha, \beta]$ and thus we shall use the fact that it has a polynomial approximation $q(x)$ in the OP basis $\lbrace S^{(a,b)}_n \rbrace$ with say $\deg q = m$ on $[\alpha, \beta]$ such that $\| \rho - q \|_{\infty} < \epsilon$, where we let $\epsilon$ be the value of the machine precision\footnote{For the numerical experiments in this paper, we shall use machine precision-accurate polynomial approximations to $\sqrt{0.1 - 0.4x +0.5x^2} $  on $[-1, 1]$ and $\sqrt{(0.5 + x)(1 + 6x^2 - 20x^4 + 15x^6)}$ on $ [0, 1]$ for the domains in Figures~\ref{fig:approx_solutions_Poisson_Helmholtz} (right)  and \ref{fig:Screened_Poisson_fish_solution}.  These approximations have degrees $138$ and $93$, respectively. 
    
 }.  Therefore, in case (i) the connection matrix $\mathcal{C}^{(a,b,c+d+2k+1)}$ has bandwidths $(0,d_0(c+d+2k+1))$ and in case (ii), the connection matrix will be approximated (up to machine precision) by a matrix with bandwidths $(0,d_1(c+k)+m)$ (since $\rho^{2c+2k + 1} \approx (\rho^2)^{c+k}q$ and $\deg[ (\rho^2)^{c+k}q] = d_1(c + k) + m$).

In the next two subsections we describe how we first compute the connection matrix (\ref{ConnMatSR}) for $k = 0$ using a moment-based algorithm and then recursively compute the connection matrices $\mathcal{C}^{(a,b,c+d+2\ell+1)}$, for $\ell = 1, 2, \ldots, k$ using a matrix factorisation method.

 \subsection{Connection matrix with $k = 0$}  \label{sect:kzero}

 The measure modification function for $k = 0$, $\rho^{c+d+1}$, has the expansion 
\begin{equation}
\rho^{c+d+1} = \sum_{n = 0}^{\infty} \mu_n^{(a,b,c,d)} S_n^{(a,b)}(x) = \mathbf{S}^{(a,b)}\bm{\mu}_0^{(a,b,c,d)},  \label{eq:modk0exp}
\end{equation}
where the entries of the vector $\bm{\mu}_0^{(a,b,c,d)} \in \mathbb{R}^{\infty \times 1}$
   are the expansion coefficients $\mu_n^{(a,b,c,d)}$ and thus
\begin{align}
	\bm{\mu}_0^{(a,b,c,d)} =  \left\langle \mathbf{S}^{(a,b)}, \rho^{c+d+1}  \right\rangle_{w_{S}^{(a,b)}} = \int_\alpha^\beta {\mathbf{S}^{(a,b)}}^\top  \rho^{c+d+1}  w^{(a,b)}_S {\rm\,d}x. \label{eq:mu0vec}
\end{align}
In case (i), $\bm{\mu}_0^{(a,b,c,d)}$ has at most $d_0(c + d + 1) + 1$ nonzero entries and in case (ii) $\bm{\mu}_0^{(a,b,c,d)}$ is approximated up to machine precision by a vector with at most $d_1 c + m + 1$ nonzero entries, where $m$ is the degree of the polynomial approximation to $\rho$.  

Using the moment-based Algorithm 2 in \cite{FastGram}, the $N \times N$ principal finite section of the connection matrix $\mathcal{C}^{(a,b,c+d+1)}$ can be computing using the first $2N-1$ entries of the vector $\bm{\mu}_0^{(a,b,c,d)}$ with linear complexity in $N$: $\mathcal{O}(d_0 N)$ in case (i) and $\mathcal{O}((d_1 + m) N)$ in case (ii).  

We shall use Gauss--Jacobi quadrature to compute the first $d_{\eta} + 1$ terms of (\ref{eq:mu0vec}).  The next $2N - 2 - d_{\eta}$ terms will be computed using the recurrence relation (\ref{eq:murecc}), which is derived from the following differential equation satisfied by the measure modification function.

\begin{lemma}
Suppose that no roots of $\rho$ (in case (i)) or $\rho^2$  (in case (ii)) lie in $[\alpha, \beta]$ and that $d_{\eta} \geq 1$. Then the weight $w_R^{(a,b,c+d+1)}$ is a semiclassical weight that satisfies the Pearson equation
\begin{align}
		\frac{\mathrm{d}}{\mathrm{d}x}\left(\sigma_{R} \, w^{(a,b,c+d+1)}_R \right) = \tau_R \, w^{(a,b,c+d+1)}_R, 
		 \label{PearsonEq}
	\end{align}
where 	
\begin{equation}
\sigma_{R} := \rho^{1+\eta} \sigma_S, \qquad \tau_R := (c+d+2+\eta) \rho^\eta \rho' \sigma_S + \rho^{1+\eta} \tau_S,  \label{eq:sigtauR}		
\end{equation}
and $\sigma_S = (x-\alpha)(\beta-x)$ and $\tau_S = (b+1)(\beta-x) - (a+1)(x-\alpha)$  are the polynomials associated with the classical Pearson equation: $(\sigma_{S} \, w_S^{(a,b)})' = \tau_S \, w_S^{(a,b)}$.  Furthermore, it follows from the semiclassical Pearson equation that the measure modification function for $k = 0$, $\rho^{c+d+1}$, satisfies the differential equation
\begin{equation}
-(c+d+1)f_{\eta,\rho} \,\rho^{c+d+1} + \rho^{1+\eta} \sigma_S\,\frac{\mathrm{d}}{\mathrm{d}x}\rho^{c+d+1} = 0, \qquad f_{\eta,\rho}= f_{\eta,\rho}(x) := {\rho^{\eta}} \rho' \sigma_S. \label{eq:Pearsonsimp}
\end{equation}
\end{lemma}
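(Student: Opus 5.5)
The plan is to verify the Pearson equation (\ref{PearsonEq}) by direct differentiation, using the factorisation (\ref{eq:factorweight}) with $k=0$, namely $w_R^{(a,b,c+d+1)} = w_S^{(a,b)}\rho^{c+d+1}$, and the classical Pearson equation $(\sigma_S w_S^{(a,b)})' = \tau_S w_S^{(a,b)}$. Since
\[
\sigma_R\, w_R^{(a,b,c+d+1)} = \rho^{c+d+2+\eta}\,\bigl(\sigma_S w_S^{(a,b)}\bigr),
\]
the product rule gives
\[
\frac{\mathrm{d}}{\mathrm{d}x}\bigl(\sigma_R w_R^{(a,b,c+d+1)}\bigr) = (c+d+2+\eta)\,\rho^{c+d+1+\eta}\rho'\,\sigma_S w_S^{(a,b)} + \rho^{c+d+2+\eta}\,\tau_S w_S^{(a,b)}.
\]
Factoring out $w_S^{(a,b)}\rho^{c+d+1} = w_R^{(a,b,c+d+1)}$ then leaves exactly $\tau_R$ as defined in (\ref{eq:sigtauR}). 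This differentiation is legitimate on $(\alpha,\beta)$ because $\rho>0$ there, so every power of $\rho$ is smooth.

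Next, to justify calling the weight semiclassical, I will check that $\sigma_R$ and $\tau_R$ are genuine polynomials. In case (i) ($\eta=0$) this is immediate. In case (ii) ($\eta=1$) we have $\rho^{1+\eta}=\rho^2$, a polynomial by assumption, and $\rho^\eta\rho' = \rho\rho' = \tfrac12(\rho^2)'$, which is also a polynomial. Moreover $\deg\sigma_R = d_\eta+2 \ge 3$ when $d_\eta\ge1$, so the pair violates the classical degree restriction $\deg\sigma\le 2$.

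The main obstacle is showing the weight is not merely classical in disguise, i.e.\ that $\sigma_R,\tau_R$ cannot be reduced by cancelling a common factor to a classical pair. I would try the following. At a root $r\notin[\alpha,\beta]$ of $\rho^{1+\eta}$, $\tau_R(r) = (c+d+2+\eta)\,(\rho^\eta\rho')(r)\,\sigma_S(r)$, which is nonzero for a simple root since $\sigma_S(r)\neq0$. Hence no factor $(x-r)$ can be removed, and any Pearson pair for $w_R$ must retain such roots in $\sigma$. If this becomes delicate (e.g.\ for repeated roots), it suffices for the paper's purposes to exhibit the polynomial pair (\ref{eq:sigtauR}) with $\deg\sigma_R>2$, and I would state the result in that form.

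Finally, (\ref{eq:Pearsonsimp}) follows from differentiating the modification function:
\[
\rho^{1+\eta}\sigma_S\,\frac{\mathrm{d}}{\mathrm{d}x}\rho^{c+d+1} = (c+d+1)\,\rho^{c+d+1}\,\rho^{\eta}\rho'\sigma_S = (c+d+1)f_{\eta,\rho}\,\rho^{c+d+1}.
\]
Equivalently, this identity is what remains after subtracting $\rho^{c+d+2+\eta}$ times the classical Pearson equation from (\ref{PearsonEq}) and dividing by $w_S^{(a,b)}$, which explains the phrase ``follows from the semiclassical Pearson equation''.
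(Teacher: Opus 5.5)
Your proof is correct and is essentially the paper's argument: substitute $w_R^{(a,b,c+d+1)}=w_S^{(a,b)}\rho^{c+d+1}$, apply the product rule and the classical Pearson equation, then read off $\tau_R$ and the differential equation for $\rho^{c+d+1}$. The paper justifies ``semiclassical'' only via $\deg\sigma_R>2$ and $\deg\tau_R>1$, which matches your fallback, and your check that $\sigma_R,\tau_R$ are polynomials in case (ii) via $\rho\rho'=\tfrac12(\rho^2)'$ is a useful addition. One caveat concerns your optional irreducibility aside: the test for cancelling $(x-r)$ from a Pearson pair is $\tau(r)=\sigma'(r)$, not $\tau(r)=0$. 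Here $\tau_R(r)-\sigma_R'(r)=(c+d+1)(\rho^\eta\rho')(r)\sigma_S(r)$, which vanishes when $c+d+1=0$, and in that case $w_R^{(a,b,0)}=w_S^{(a,b)}$ really is classical. So that aside is invalid as written, but it is not needed for the statement.
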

\begin{proof}
The identities (\ref{PearsonEq}) and (\ref{eq:Pearsonsimp}) can be verified by  substituting (\ref{eq:factorweight}) with $k=0$ and using the classical Pearson equation. The weight $w_R^{(a,b,c+d+1)}$ is semiclassical because $\deg \sigma_{R} > 2$ and $\deg \tau_{R} > 1$ (see the sentence below (\ref{Pearson})).  
\end{proof}

Just as in (\ref{eq:jacmats}), for the quasi-matrix $\mathbf{S}^{(a,b)}$ of shifted-and-normalised Jacobi polynomials with respect to $w^{(a,b)}_S(x)$ there exist  banded operator matrices such that
\begin{align}
	\mathbf{S}^{(a,b)} = \mathbf{S}^{(a+1,b+1)} \, T(w_S^{(a,b)}), \quad x \,\mathbf{S}^{(a,b)} = \mathbf{S}^{(a,b)} \, J(w_S^{(a,b)}), \quad \frac{\mathrm{d}}{\mathrm{d}x} \mathbf{S}^{(a,b)} = \mathbf{S}^{(a+1,b+1)} \, D(w_S^{(a,b)}).  \label{eq:Smats}
\end{align}
It follows from the second equation in (\ref{eq:Smats}) and (\ref{eq:linopmat}) that for a polynomial $p(x)$, 
\begin{equation}
p(x)\mathbf{S}^{(a,b)} = \mathbf{S}^{(a,b)} \, p\left(J(w_S^{(a,b)})\right) \quad \Rightarrow \quad p\left(J(w_S^{(a,b)})\right) = \left\langle \mathbf{S}^{(a,b)}, p(x) \mathbf{S}^{(a,b)}   \right\rangle_{w_{S}^{(a,b)}},  \label{eq:polymult}
\end{equation}
and since the Jacobi matrix $J(w_S^{(a,b)})$ has bandwidths $(1, 1)$, we know $p\left(J(w_S^{(a,b)})\right)$ has bandwidths $(\deg p, \deg p)$.

To derive the recurrence satisfied by the entries of $\bm{\mu}_0^{(a,b,c,d)}$, we require the operator matrix $M$ defined by  
\begin{equation}
\sigma_S \frac{\mathrm{d}}{\mathrm{d}x} \mathbf{S}^{(a,b)} =  \mathbf{S}^{(a,b)} M.  \label{eq:weightdiff}
\end{equation}
It follows from (\ref{eq:linopmat}) and (\ref{eq:Smats}) that
\begin{eqnarray}
M &=& \left\langle \mathbf{S}^{(a,b)},  \sigma_S \frac{\mathrm{d}}{\mathrm{d}x} \mathbf{S}^{(a,b)} \right\rangle_{w_S^{(a,b)}} = \left\langle \mathbf{S}^{(a,b)},   \frac{\mathrm{d}}{\mathrm{d}x} \mathbf{S}^{(a,b)} \right\rangle_{w_S^{(a+1,b+1)}} \notag \\
&=& \left\langle \mathbf{S}^{(a+1,b+1)} \, T(w_S^{(a,b)}),   \mathbf{S}^{(a+1,b+1)} \, D(w_S^{(a,b)}) \right\rangle_{w_S^{(a+1,b+1)}}  \notag \\
&=& T(w_S^{(a,b)})^{\top} D(w_S^{(a,b)}).  \label{eq:Mform}
\end{eqnarray}
Since the bandwidths of $T(w_S^{(a,b)})$ and  $D(w_S^{(a,b)})$ are, respectively, $(0, 2)$ and $(-1, 1)$, $M$ has bandwidths $(2, 0)+ (-1,1) = (1, 1)$.
               
\begin{proposition} \label{Moment_Recurrence_Relation}
	The entries of the vector $\bm{\mu}_0^{(a,b,c,d)}$ defined in (\ref{eq:mu0vec}) satisfy a linear recurrence relation with at most $2 d_{\eta} +3$ terms, given by
	\begin{align}
		\left(-(c+d+1)  f_{\eta, \rho}\left(J(w_S^{(a,b)}) \right)  +  \rho^{1+\eta}\left(J(w_S^{(a,b)}) \right)\,{T(w_S^{(a,b)})}^\top \,D(w_S^{(a,b)}) \right) \, \bm{\mu}_0^{(a,b,c,d)} = \bm{0}.  \label{eq:murecc}
	\end{align}
 \end{proposition}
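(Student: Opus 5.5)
We take the differential equation (\ref{eq:Pearsonsimp}) satisfied by $g := \rho^{c+d+1}$ and turn each term into an operator matrix acting on the coefficient vector $\bm{\mu}_0^{(a,b,c,d)}$ of $g = \mathbf{S}^{(a,b)}\bm{\mu}_0^{(a,b,c,d)}$ from (\ref{eq:modk0exp}).

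\textbf{The multiplication term.} Since $f_{\eta,\rho} = \rho^\eta\rho'\sigma_S$ is a polynomial (in case (ii), $\rho\rho' = \tfrac12(\rho^2)'$), (\ref{eq:polymult}) gives
\[
f_{\eta,\rho}\, g = \mathbf{S}^{(a,b)} f_{\eta,\rho}\bigl(J(w_S^{(a,b)})\bigr)\bm{\mu}_0^{(a,b,c,d)}.
\]

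\textbf{The derivative term.} By (\ref{eq:weightdiff}) and (\ref{eq:Mform}),
\[
\sigma_S g' = \mathbf{S}^{(a,b)} T(w_S^{(a,b)})^\top D(w_S^{(a,b)})\bm{\mu}_0^{(a,b,c,d)}.
\]
Multiplying by the polynomial $\rho^{1+\eta}$ and using (\ref{eq:polymult}) again yields
\[
\rho^{1+\eta}\sigma_S g' = \mathbf{S}^{(a,b)}\rho^{1+\eta}\bigl(J(w_S^{(a,b)})\bigr)T(w_S^{(a,b)})^\top D(w_S^{(a,b)})\bm{\mu}_0^{(a,b,c,d)}.
\]

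\textbf{Conclusion.} Substituting both expressions into (\ref{eq:Pearsonsimp}) shows that $\mathbf{S}^{(a,b)}$ applied to the left-hand side vector of (\ref{eq:murecc}) is the zero function. The columns of $\mathbf{S}^{(a,b)}$ are orthonormal. So we may take the inner product with $\mathbf{S}^{(a,b)}$, i.e.\ apply (\ref{eq:linopmat}), to conclude that this coefficient vector vanishes, which is (\ref{eq:murecc}).

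\textbf{Main obstacle: justifying the infinite matrix--vector products.} In case (ii), $g$ is not a polynomial and $\bm{\mu}_0^{(a,b,c,d)}$ is an infinite vector. Each row involves only finitely many entries because the matrices are banded, so every product is well defined. The remaining issue is showing that the expansion of $\sigma_S g'$ has the coefficients $M\bm{\mu}_0^{(a,b,c,d)}$. Since $g$ is analytic on $[\alpha,\beta]$, its coefficients decay geometrically. Then $\langle S_i^{(a,b)}, \sigma_S g'\rangle_{w_S^{(a,b)}} = \langle \sigma_S (S_i^{(a,b)})', g\rangle$ up to a sign, by integration by parts. The boundary terms vanish because of the factor $\sigma_S w_S^{(a,b)}$, which is zero at $\alpha$ and $\beta$. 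The right-hand side is a finite combination of the $\mu_n$ via $M^\top$. This settles the derivative term. The multiplication terms are handled directly by the symmetry of $p(J)$, with no convergence issues.

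\textbf{Counting terms.} $\deg f_{\eta,\rho} = (\eta+1)\deg\rho - 1 + 2 = d_\eta + 1$, so $f_{\eta,\rho}(J)$ has bandwidths $(d_\eta+1, d_\eta+1)$. Next, $\rho^{1+\eta}(J)$ has bandwidths $(d_\eta, d_\eta)$ and $M = T^\top D$ has bandwidths $(1,1)$, so their product has bandwidths $(d_\eta+1, d_\eta+1)$. The combined matrix therefore has bandwidths $(d_\eta+1, d_\eta+1)$. Hence each row gives a linear relation among at most $2d_\eta + 3$ consecutive entries of $\bm{\mu}_0^{(a,b,c,d)}$.
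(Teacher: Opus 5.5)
Your proposal is correct and follows the paper's proof: substitute $\rho^{c+d+1}=\mathbf{S}^{(a,b)}\bm{\mu}_0^{(a,b,c,d)}$ into (\ref{eq:Pearsonsimp}), use (\ref{eq:polymult}), (\ref{eq:weightdiff}) and (\ref{eq:Mform}) to read off the coefficients, and count the bandwidths $(d_\eta+1,d_\eta+1)$ of the resulting matrix. Your justification of the infinite products goes beyond what the paper gives, but the integration-by-parts identity is misstated: it should read $\langle S_i^{(a,b)}, \sigma_S g'\rangle_{w_S^{(a,b)}} = -\langle \sigma_S (S_i^{(a,b)})' + \tau_S S_i^{(a,b)}, g\rangle_{w_S^{(a,b)}}$, where the $\tau_S$ term comes from $(\sigma_S w_S^{(a,b)})' = \tau_S w_S^{(a,b)}$. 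This is harmless, since the left factor is still a degree-$(i+1)$ polynomial whose coefficients form row $i$ of $-M$, so your conclusion stands.
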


\begin{proof}
Substituting the measure modification function expressed in the $\mathbf{S}^{(a,b)}$ basis, (\ref{eq:modk0exp}), into the differential equation (\ref{eq:Pearsonsimp}) and using  (\ref{eq:polymult}), (\ref{eq:weightdiff}), (\ref{eq:Mform}) and the fact that $f_{\eta,\rho}$ and $\rho^{1 + \eta}$ are polynomials, (\ref{eq:murecc}) follows.  The matrix multiplying  $\bm{\mu}_0^{(a,b,c,d)}$ in (\ref{eq:murecc}) has bandwidths $(d_{\eta} + 1, d_{\eta} + 1)$, hence the entries of  $\bm{\mu}_0^{(a,b,c,d)}$ satisfy a linear recurrence relation with at most $2(d_{\eta} + 1) + 1 = 2 d_{\eta} +3$ terms.
              \end{proof}

\subsection{Connection matrices with $k > 0$}\label{sect:connectkg0}
            To compute the connection matrices (\ref{ConnMatSR}) with $k > 0$, we require the following raising matrices (which can also be viewed as connection matrices between different families of semiclassical OPs) 
\begin{equation}
\mathbf{R}^{(a,b,c+d+2k+1)} = \mathbf{R}^{(a,b,c+d+2k+3)} \,\mathcal{R}_{k+1}^{(a,b,c,d)},  \label{eq:semiclassraise}	
\end{equation}
because then it follows from (\ref{ConnMatSR}) with $k=0$ and (\ref{eq:semiclassraise}) that
\begin{eqnarray}
	\mathbf{S}^{(a,b)} &=& \mathbf{R}^{(a,b,c+d+1)} \,\mathcal{C}^{(a,b,c+d+1)}  = \mathbf{R}^{(a,b,c+d+3)} \mathcal{R}_{1}^{(a,b,c,d)}	\mathcal{C}^{(a,b,c+d+1)} =\cdots \notag \\
 &=&  \mathbf{R}^{(a,b,c+d+2k+1)}\mathcal{R}_{k}^{(a,b,c,d)}\mathcal{R}_{k-1}^{(a,b,c,d)}\cdots \mathcal{R}_{1}^{(a,b,c,d)}	\mathcal{C}^{(a,b,c+d+1)},  \label{eq:raiseconnect}
\end{eqnarray}
and thus, comparing (\ref{eq:raiseconnect}) and (\ref{ConnMatSR}), we conclude that
\begin{equation*}
\mathcal{C}^{(a,b,c+d+2k+1)} = \mathcal{R}_{k}^{(a,b,c,d)}\mathcal{R}_{k-1}^{(a,b,c,d)}\cdots \mathcal{R}_{1}^{(a,b,c,d)}	\mathcal{C}^{(a,b,c+d+1)}.
 \end{equation*}
Hence, connection matrices with $k > 0$ will be computed recursively  using the raising matrices, 
\begin{equation}
\mathcal{C}^{(a,b,c+d+2\ell+1)} = \mathcal{R}_{\ell}^{(a,b,c,d)}\mathcal{C}^{(a,b,c+d+2\ell-1)}, \qquad \ell = 1, \ldots, k.  \label{eq:conmatrec}
\end{equation}
The measure modification function between the semiclassical OP families $\mathbf{R}^{(a,b,c+d+2\ell-1)}$ and $\mathbf{R}^{(a,b,c+d+2\ell+1)}$ is $\rho^2$ because
\begin{equation*}
	w_R^{(a,b,c+d+2\ell+1)} = \rho^2 w_R^{(a,b,c+d+2\ell-1)}, \quad 1 \leq \ell \leq k.
\end{equation*}
The associated measure modification matrix between the semiclassical OP families is defined by
\begin{equation*}
	\rho^2\left( J(w_{R}^{(a,b,c+d+2\ell-1)}) \right) = \left\langle \mathbf{R}^{(a,b,c+d+2\ell-1)}, \rho^2\mathbf{R}^{(a,b,c+d+2\ell-1)} \right\rangle_{w_{R}^{(a,b,c+d+2\ell-1)}},
\end{equation*}
which follows from (\ref{eq:polymult}) and the fact that $\rho^2$ is a polynomial; here $J(w^{(a,b,c+d+2\ell-1)}_R)$ denotes the Jacobi matrix of $\mathbf{R}^{(a,b,c+d+2\ell-1)}$, the computation of which will be discussed in the next section.  We note that the modification matrix is symmetric, has bandwidths $(\deg \rho^2, \deg \rho^2)$ and is positive definite since $\rho^2$ is positive on $[\alpha, \beta]$.  It follows from~\cite[Lemma 2.8]{Gutleb} that $\mathcal{R}_\ell^{(a,b,c,d)}$ is the Cholesky factor of the associated modification matrix, i.e., 
\begin{equation}
\rho^2\left( J(w_{R}^{(a,b,c+d+2\ell-1)}) \right) = {\mathcal{R}_{\ell}^{(a,b,c,d)}}^{\top} \mathcal{R}_{\ell}^{(a,b,c,d)},  \label{eq:raisechol}
\end{equation}
and $\mathcal{R}_{\ell}^{(a,b,c,d)}$ has bandwidths $(0, \deg \rho^2)$.    We compute the $N \times N$ principal finite section of $\mathcal{R}_{\ell}^{(a,b,c,d)}$ by computing the principal finite section of $J(w_{R}^{(a,b,c+d+2\ell-1)})$, evaluating the modification matrix $\rho^2\left( J(w_{R}^{(a,b,c+d+2\ell-1)}) \right) $ via the Clenshaw algorithm \cite{Clenshaw1,Clenshaw2} and then computing its Cholesky factor, which has complexity $\mathcal{O}(d_{\eta}^2 N)$. 
Computing finite sections of $\mathcal{R}_{\ell}^{(a,b,c,d)}$ via Cholesky factorisations for $\ell = 1, \ldots, k$ (and having computed the connection matrix for $k = 0$ using the moment-based approach as described in the previous section), we obtain (finite sections of) connection matrices with $k > 0$ via (\ref{eq:conmatrec})\footnote{To compute a single $N\times N$ connection matrix of bandwidths $(0,\mu)$, the moment-based method has complexity $\mathcal{O}(\mu N)$, while the Cholesky factorisation method has $\mathcal{O}(\mu^2 N)$ complexity.  The reason we do not use the moment-based method for connection matrices with $k >0$ is because in order to compute an $N\times N$   
  principal finite section of the connection matrix $\mathcal{C}^{(a,b,c+d+2k+1)}$  with this method, one needs to compute an $M\times M$ principal finite section of the $k = 0$ connection matrix $\mathcal{C}^{(a,b,c+d+1)}$, where $M = \mathcal{O}(2^kN)$, 
 which becomes prohibitively expensive for large $k$ ($k$ ranges between $0$ and $N$).  Hence, we use the moment-based method only for $k = 0$. }.

\subsection{Jacobi matrices of semiclassical OPs} \label{SemiclassicalJacobi}
The Jacobi matrices of  the  families of normalised semiclassical OPs are symmetric, tridiagonal and satisfy 
\begin{align}
	x \,&\mathbf{R}^{(a,b,c+d+2k+1)} = \mathbf{R}^{(a,b,c+d+2k+1)} \, J(w_R^{(a,b,c+d+2k+1)}). \label{eq:semiclassjacdef}
\end{align}
For $k = 0$, we use  (\ref{ConnMatSR}) to deduce that
\begin{align}
	x \,\mathbf{R}^{(a,b,c+d+1)} &= x \, \mathbf{S}^{(a,b)} \,\left(\mathcal{C}^{(a,b,c+d+1)} \right)^{-1} = \mathbf{S}^{(a,b)} \, J(w_S^{(a,b)}) \, \left(\mathcal{C}^{(a,b,c+d+1)} \right)^{-1} \notag\\
	&= \mathbf{R}^{(a,b,c+d+1)} \,\mathcal{C}^{(a,b,c+d+1)} \, J(w_S^{(a,b)}) \, \left(\mathcal{C}^{(a,b,c+d+1)} \right)^{-1}.  \label{eq:semiclassjac}
\end{align}
Thus, comparing (\ref{eq:semiclassjacdef}) and (\ref{eq:semiclassjac}) with $k = 0$, there is an infinite-dimensional similarity transformation between $J(w_R^{(a,b,c+d+1)})$ 
and the (known) Jacobi matrix $J(w^{(a,b)}_S)$ of the classical OPs:
\begin{equation}\label{semiclassicalJACOBI}
	J(w_R^{(a,b,c+d+1)}) = \mathcal{C}^{(a,b,c+d+1)} \, J(w_S^{(a,b)}) \, \left(\mathcal{C}^{(a,b,c+d+1)} \right)^{-1}.
\end{equation}
The Jacobi matrices for $k >0$ are derived just as for the case $k = 0$, but using (\ref{eq:semiclassraise}), and then we obtain the recursive formula 
\begin{equation}
J(w_R^{(a,b,c+d+2\ell+1)}) = \mathcal{R}_{\ell}^{(a,b,c,d)} \, J(w_R^{(a,b,c+d+2\ell-1)}) \, \left(\mathcal{R}_{\ell}^{(a,b,c,d)} \right)^{-1}, \qquad \ell = 1, \ldots, k.  \label{eq:semiclassjackg0}
\end{equation}
                        To compute the Jacobi matrices (\ref{semiclassicalJACOBI}), (\ref{eq:semiclassjackg0}) and also the differentiation matrices for semiclassical OPs (to be discussed in the next subsection), we shall use the following result, which is a generalisation  of the approach used to compute Jacobi matrices of semiclassical OPs in~\cite{Gutleb} and differentiation matrices of semiclassical OPs in~\cite{Papadopoulos1} to banded matrices of arbitrary finite bandwidths.  
             \begin{lemma} \label{thm:MatrixProduct}
Let $A$, $B$ and $C$ be infinite-dimensional matrices satisfying
 $A = CB^{-1}$, where 
	$A$ is an unknown banded matrix with bandwidths $(\lambda, \mu)$, $B$ is an upper-triangular matrix with nonzero diagonal entries,
	and $C$ is an arbitrary (possibly unstructured) matrix. 
	Then the $N\times N$ principal finite section of $A$ can be computed by
  	solving the following sequence of lower-triangular systems of dimension at most $(\lambda + \mu + 1)\times (\lambda + \mu + 1)$: 
	\begin{align*}
		&\begin{pmatrix}
		B_{[i+j_0,i+j_0]} & 0  & \cdots  & 0 \\
		B_{[i+j_0,i+j_0+1]} & B_{[i+j_0+1,i+j_0+1]}  & \cdots  & 0 \\
		\vdots & \vdots  & \ddots  & \vdots \\
		B_{[i+j_0,i+j_e ]} & B_{[i+j_0+1, i+j_e]}  & \cdots & B_{[i+j_e,i+j_e]} 
		\end{pmatrix}
		\begin{pmatrix}
		A_{[i,i+j_0]} \\
		A_{[i,i+j_0+1]} \\
		\vdots \\
		A_{[i,i+j_e]} 
		\end{pmatrix}
		=
		\begin{pmatrix}
		C_{[i,i+j_0]} \\
		C_{[i,i+j_0+1]} \\
		\vdots \\
		C_{[i,i+j_e]} 
		\end{pmatrix},
	\end{align*}
	where  $\max\{ 0, -\mu \} \leq i \leq \min\{ N-1, N-1+\lambda\}$, $j_0 = \max\{ -\lambda, -i \}$ and $j_e = \min\{ \mu, N-1-i \}$.  Hence, $A$ is computed with $\mathcal{O}\left((\lambda + \mu)^2 N\right)$ complexity, i.e., linear complexity in $N$. 
 \end{lemma}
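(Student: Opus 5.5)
The plan is to rewrite $A = CB^{-1}$ as $AB = C$ and read off row $i$ of this identity entry by entry, using the banded structure of $A$ and the upper-triangular structure of $B$ to make every row a small finite system. Since $B$ is upper triangular, $(AB)_{[i,j]} = \sum_{k \le j} A_{[i,k]} B_{[k,j]}$. Since $A$ has bandwidths $(\lambda,\mu)$, only $k$ with $i-\lambda \le k \le i+\mu$ (and $k\ge 0$) contribute. Writing $k = i+s$, for column $j = i+t$ the identity $C_{[i,i+t]} = (AB)_{[i,i+t]}$ becomes
\[
C_{[i,i+t]} = \sum_{s=j_0}^{t} B_{[i+s,i+t]} A_{[i,i+s]}, \qquad j_0 = \max\{-\lambda,-i\}.
\]
Here the upper limit is $t$ because $B$ is triangular, and it is automatically at most $\mu$ when $t\le \mu$.

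The key step is then the following. Letting $t$ range over $j_0, j_0+1, \dots, j_e$, the equations form a lower-triangular system in the unknowns $A_{[i,i+j_0]},\dots,A_{[i,i+j_e]}$:
\begin{itemize}
\item the row indexed by $t$ involves only the unknowns with $s \le t$;
\item its coefficients are $B_{[i+s,i+t]}$, exactly the matrix displayed in the statement;
\item its diagonal entries are $B_{[i+t,i+t]} \neq 0$.
\end{itemize}
Hence each system is uniquely solvable by forward substitution, with no dependence on other rows of $A$. This is the point to stress: truncating to the $N\times N$ section is exact, because row $i$ of $A$ restricted to columns $i+j_0,\dots,i+j_e$ is determined solely by $C_{[i,i+j_0]},\dots,C_{[i,i+j_e]}$ and a leading block of $B$.

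The upper cutoff $j_e = \min\{\mu,N-1-i\}$ does not require knowledge of columns beyond $N-1$, since forward substitution only uses earlier unknowns. The range of $i$ is fixed by requiring the band of row $i$ to meet the index window $[0,N-1]$. The system size is $j_e - j_0 + 1 \le \lambda+\mu+1$.

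For complexity, forward substitution on a system of size at most $\lambda+\mu+1$ costs $\mathcal{O}((\lambda+\mu)^2)$, and there are at most $N$ (up to $\mathcal{O}(\lambda+\mu)$ extra) rows, giving $\mathcal{O}((\lambda+\mu)^2 N)$. I expect the only real obstacle to be bookkeeping: checking the index limits $j_0$, $j_e$ and the range of $i$ at the boundaries, for example when $\lambda$ or $\mu$ is negative as for differentiation matrices. I would handle this with the convention that entries with negative indices vanish, and by verifying that $j_0 \le j_e$ exactly on the stated range of $i$.
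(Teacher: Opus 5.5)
Your proof is correct and matches the paper's approach. The paper states this lemma without a written proof, but its displayed lower-triangular systems are exactly the band-restricted entries $C_{[i,i+t]}=\sum_{s=j_0}^{t}B_{[i+s,i+t]}A_{[i,i+s]}$, $j_0\le t\le j_e$, of row $i$ of $AB=C$; your derivation supplies precisely these steps: triangularity of $B$ cuts the sum at $s=t$, the band of $A$ cuts it at $s=j_0$, the nonzero diagonal allows forward substitution, and $j_e\le N-1-i$ makes the truncation exact.

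One small correction: the admissible range $\max\{0,-\mu\}\le i\le\min\{N-1,N-1+\lambda\}$ lies inside $\{0,\dots,N-1\}$. So there are at most $N$ systems, with no ``extra'' rows, each costing $\mathcal{O}((\lambda+\mu+1)^2)$.
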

 Hence it follows from Lemma~\ref{thm:MatrixProduct} that the Jacobi matrices (\ref{semiclassicalJACOBI})--(\ref{eq:semiclassjackg0}) of the semiclassical OPs can be computed in linear complexity in $N$ by solving a sequence of lower-triangular systems of size at most $3\times 3$.

\subsection{Differentiation matrices of semiclassical OPs}
The differentiation matrix for the semiclassical OPs in (\ref{KoornwinderFomula}) is defined as
\begin{align}\label{DMdef}
	\frac{\mathrm{d}}{\mathrm{d}x} \mathbf{R}^{(a,b,c+d+2k+1)} = \mathbf{R}^{(a+1,b+1,c+d+2k+2+\eta)}\,D_\eta(w^{(a,b,c+d+2k+1)}_R).
\end{align}
The incrementing and decrementing of parameters as seen here is analogous to other well-known univariate OP families' derivatives \cite{NIST,SnowballOlver,TriangleOPs}.  The proof of the following result is given in Appendix~\ref{AppendixA}.
\begin{proposition}\label{bandwidths_diff}
	$D_\eta(w^{(a,b,c+d+2k+1)}_R)$ is banded with bandwidths $(-1, 1 + d_{\eta})$.
\end{proposition}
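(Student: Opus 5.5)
We need to show that $D_\eta := D_\eta(w^{(a,b,c+d+2k+1)}_R)$ has bandwidths $(-1,1+d_\eta)$. Writing $\lambda := c+d+2k+1$, I would use the inner-product representation (\ref{eq:linopmat}), since the $\mathbf{R}^{(a+1,b+1,\lambda+1+\eta)}$ are orthonormal:
\begin{equation*}
\left(D_\eta\right)_{[i,j]} = \left\langle R_i^{(a+1,b+1,\lambda+1+\eta)}, \tfrac{\mathrm{d}}{\mathrm{d}x} R_j^{(a,b,\lambda)} \right\rangle_{w_R^{(a+1,b+1,\lambda+1+\eta)}}.
\end{equation*}
Note that $w_R^{(a+1,b+1,\lambda+1+\eta)} = \sigma_S\,\rho^{1+\eta}\, w_R^{(a,b,\lambda)} = \sigma_R\, w_R^{(a,b,\lambda)}$, with $\sigma_R$ as in (\ref{eq:sigtauR}) but now for exponent $\lambda$. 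The two bandwidths come from two different orthogonality arguments.

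\emph{Lower bandwidth $-1$.} Since $\frac{\mathrm{d}}{\mathrm{d}x}R_j^{(a,b,\lambda)}$ is a polynomial of degree $j-1$, it is orthogonal to $R_i^{(a+1,b+1,\lambda+1+\eta)}$ whenever $i > j-1$, i.e.\ whenever $i \geq j$. Hence $(D_\eta)_{[i,j]}=0$ for $i-j>-1$.

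\emph{Upper bandwidth $1+d_\eta$.} Here I would integrate by parts, moving the derivative onto the other factor. The boundary terms vanish because $\sigma_S$ vanishes at $\alpha$ and $\beta$ (for $a,b>-1$ the factor $(\beta-x)^{a+1}(x-\alpha)^{b+1}\to 0$). This gives
\begin{equation*}
\left(D_\eta\right)_{[i,j]} = -\int_\alpha^\beta R_j^{(a,b,\lambda)}\,\frac{\mathrm{d}}{\mathrm{d}x}\Big(\sigma_R\, w_R^{(a,b,\lambda)}\, R_i^{(a+1,b+1,\lambda+1+\eta)}\Big)\,\mathrm{d}x .
\end{equation*}
The Pearson equation (\ref{PearsonEq}), which holds with $c+d+1$ replaced by $\lambda$ by the same verification (with $\tau_R = (\lambda+1+\eta)\rho^\eta\rho'\sigma_S + \rho^{1+\eta}\tau_S$), turns the integrand into $w_R^{(a,b,\lambda)}\big(\tau_R R_i + \sigma_R R_i'\big)$. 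Both $\tau_R R_i$ and $\sigma_R R_i'$ are polynomials, because $\rho^\eta\rho'$ is a polynomial in both cases (in case (ii) it equals $\tfrac12(\rho^2)'$).

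\emph{Degree count.} We have $\deg\sigma_R = 2+d_\eta$, and $\deg\tau_R \leq 1+d_\eta$ since $\deg(\rho^\eta\rho') = d_\eta-1$. So $\tau_R R_i + \sigma_R R_i'$ has degree at most $i+1+d_\eta$. By orthogonality of $R_j^{(a,b,\lambda)}$ with respect to $w_R^{(a,b,\lambda)}$, the entry vanishes when $j > i+1+d_\eta$, which is exactly the upper bandwidth $1+d_\eta$.

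The only delicate point is justifying the Pearson equation and the vanishing boundary terms for general $\lambda$ rather than $k=0$. This is a direct recomputation of (\ref{PearsonEq}) with exponent $\lambda$, using that $\rho$ has no roots on $[\alpha,\beta]$. Alternatively, one can avoid calculus by writing $\sigma_R\frac{\mathrm{d}}{\mathrm{d}x}$ through a product of banded matrices, as in (\ref{eq:Mform}), but integration by parts is the most direct route.
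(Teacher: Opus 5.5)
Your proof is correct and is essentially the paper's own argument: the lower bandwidth $-1$ comes from $\deg R_j' = j-1$ together with orthogonality in the raised weight. The upper bandwidth $1+d_\eta$ comes from integrating by parts, applying the Pearson equation with exponent $c+d+2k+1$, and bounding $\deg\left(\sigma_R R_i' + \tau_R R_i\right) \leq i+1+d_\eta$; you also spell out two points the paper leaves implicit, namely that the boundary terms vanish and that $\rho^\eta\rho'$ is a polynomial in both cases.
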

  To compute the differentiation matrix for $k = 0$, we use the connection matrix (\ref{ConnMatSR}) and the (known) differentiation matrix of classical OPs:
 \begin{eqnarray*}
\frac{\mathrm{d}}{\mathrm{d}x} \mathbf{R}^{(a,b,c+d+1)} &=& \frac{\mathrm{d}}{\mathrm{d}x} \mathbf{S}^{(a,b)} \, \left( \mathcal{C}^{(a,b,c+d+1)} \right)^{-1} = \mathbf{S}^{(a+1,b+1)} \, D(w_S^{(a,b)}) \, \left( \mathcal{C}^{(a,b,c+d+1)} \right)^{-1}\\
	&=& \mathbf{R}^{(a+1,b+1,c+d+2+\eta)} \, \mathcal{C}^{(a+1,b+1,c+d+2+\eta)} \, D(w_S^{(a,b)}) \, \left( \mathcal{C}^{(a,b,c+d+1)} \right)^{-1},
\end{eqnarray*}
hence
\begin{align}
	D_\eta(w^{(a,b,c+d+1)}_R) = \mathcal{C}^{(a+1,b+1,c+d+2+\eta)} \, D(w_S^{(a,b)}) \, \left( \mathcal{C}^{(a,b,c+d+1)} \right)^{-1},  \label{eq:semiclassDMkzero}
\end{align}
where the connection matrices in (\ref{eq:semiclassDMkzero}) are computed using the moment-based method described in Section~\ref{sect:kzero}.

The differentiation matrices for $k > 0$ can be derived similarly, but using (\ref{eq:semiclassraise}) and (\ref{DMdef}), we obtain the recursive formula
\begin{equation}
D_\eta(w_R^{(a,b,c+d+2\ell+1)}) = \mathcal{R}_\ell^{(a+1,b+1,c+1,d+\eta)} \, D_\eta(w_R^{(a,b,c+d+2\ell-1)}) \, \left(\mathcal{R}_\ell^{(a,b,c,d)} \right)^{-1}, \qquad \ell = 1, \ldots, k,  \label{eq:semiclassDMskg0}
\end{equation}
where the raising matrices in (\ref{eq:semiclassDMskg0}) are computed using the Cholesky factorisation approach in Section~\ref{sect:connectkg0}.

It follows from Lemma~\ref{thm:MatrixProduct} and Proposition~\ref{bandwidths_diff} that
 a semiclassical differentiation matrix can be computed in $\mathcal{O}\left(d_{\eta}^2 N  \right)$ complexity.

\subsection{Raising matrices of semiclassical OPs} \label{SemiclassicalRaising}

      The final type of operator matrices of univariate OPs we shall use in the next section is raising matrices satisfying
 \begin{align}\label{SemiclassicalRaisingfomula}
	\mathbf{R}^{(a,b,c+d+2k+1)} = \mathbf{R}^{(a+\lambda_1,b+\lambda_2,c+d+\lambda_3+\lambda_4+2k+1)} \, \mathcal{T}_{(a,b,c+d+2k+1)}^{(a+\lambda_1,b+\lambda_2,c+d+\lambda_3+\lambda_4+2k+1)},
 \end{align}
where $\lambda_1, \lambda_2, \lambda_3, \lambda_4 \in \mathbb{N}_0$  
  and in case (ii), $\lambda_3 = \lambda_4$.  These matrices are computed using the approach used to compute the raising matrices  (\ref{eq:semiclassraise}): since the weight modification function between the semiclassical OPs in (\ref{SemiclassicalRaisingfomula}) is $w_R^{(\lambda_1,\lambda_2, \lambda_3+\lambda_4)}$, it follows  from~\cite[Lemma 2.8]{Gutleb} that the raising matrix in (\ref{SemiclassicalRaisingfomula}) is the Cholesky factor of the modification matrix $w_R^{(\lambda_1,\lambda_2, \lambda_3+\lambda_4)}\left(J(w^{(a,b,c+d+2k+1)}_R)\right)$, which has bandwiths $(\ell, \ell)$, where $\ell = \lambda_1+\lambda_2+  (\lambda_3+\lambda_4)\deg \rho^2/2$ and thus the raising matrix (\ref{SemiclassicalRaisingfomula}) has bandwidths $(0, \ell)$ and computing its principal finite section in this manner has $\mathcal{O}(\ell^2N)$ complexity.

\section{Sparse operator matrices of generalised Koornwinder polynomials} \label{Operator_matrices _Koornwinder_polynomials}
In this section, we construct sparse operator matrices for the (bivariate) generalised Koornwinder polynomials $\mathbf{H}^{(a,b,c,d)}$ using entries of operator matrices of univariate OPs derived in sections~\ref{sect:classopmats} and~\ref{sect:semiclassconnmats}. 
    
\subsection{Multiplication matrices of generalised Koornwinder polynomials}
The matrices $J_{x}^{(a,b,c,d)}$ and $J_{y}^{(a,b,c,d)}$ represent multiplication of $\mathbf{H}^{(a,b,c,d)}$ by $x$ and $y$, respectively:
\begin{equation*}
	x\mathbf{H}^{(a,b,c,d)}=\mathbf{H}^{(a,b,c,d)}J_{x}^{(a,b,c,d)},\quad y\mathbf{H}^{(a,b,c,d)}=\mathbf{H}^{(a,b,c,d)}J_{y}^{(a,b,c,d)}.
\end{equation*}
\begin{proposition}
The generalised Koornwinder polynomials satisfy
  	\begin{align}
		x H_{n,k}^{(a,b,c,d)}(x,y) = \sum_{m=n-1}^{n+1}  l_{m,k}^{\,x, (n,k)} \, H_{m, k}^{(a,b,c,d)}(x, y),  \label{eq:multx}
	\end{align}
	where the coefficients $l_{m,k}^{\,x, (n,k)}$ are given by 
 	\begin{align}
		l_{m,k}^{\,x, (n,k)} = \left( J(w_R^{(a,b,c+d+2k+1)}) \right)_{[n-k,m-k]}, \quad n-1\leq m \leq n+1,  \label{eq:multxcoeffs}
	\end{align}
	Therefore, $J_{x}^{(a,b,c,d)}$ is symmetric, block-tridiagonal (it has block-bandwidths $(1, 1)$) with diagonal blocks (each block has bandwidths $(0, 0)$).
\end{proposition}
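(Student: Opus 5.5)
The plan is to multiply the Koornwinder formula (\ref{KoornwinderFomula}) by $x$ and observe that $x$ acts only on the univariate factor $R_{n-k}^{(a,b,c+d+2k+1)}(x)$. The factor $\rho(x)^k\,\widetilde{P}_k^{(d,c)}(y/\rho(x))$ is common to every $H_{m,k}^{(a,b,c,d)}$ with the same $k$. So we start from
\begin{equation*}
x H_{n,k}^{(a,b,c,d)}(x,y) = \left( x R_{n-k}^{(a,b,c+d+2k+1)}(x)\right) \rho(x)^k \, \widetilde{P}_k^{(d,c)}\left(\frac{y}{\rho(x)}\right).
\end{equation*}

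Next we use the three-term recurrence (\ref{eq:semiclassjacdef}) of the orthonormal semiclassical family $\mathbf{R}^{(a,b,c+d+2k+1)}$, whose Jacobi matrix is symmetric and tridiagonal. It expresses $x R_{n-k}^{(a,b,c+d+2k+1)}$ as a combination of $R_{j}^{(a,b,c+d+2k+1)}$ for $j=n-k-1,n-k,n-k+1$. The coefficient of $R_j$ is $(J(w_R^{(a,b,c+d+2k+1)}))_{[j,n-k]}$, which equals $(J(w_R^{(a,b,c+d+2k+1)}))_{[n-k,j]}$ by symmetry. Setting $j=m-k$ and multiplying back by $\rho^k \widetilde{P}_k^{(d,c)}(y/\rho)$ turns each term into $H_{m,k}^{(a,b,c,d)}$, which gives (\ref{eq:multx}) with the coefficients (\ref{eq:multxcoeffs}). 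When $n=k$, the term $m=n-1$ would involve $H_{k-1,k}$, which does not exist. This causes no difficulty, because $R_{-1}\equiv 0$ and the corresponding matrix entry is zero by the paper's negative-index convention.

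For the structure of $J_x^{(a,b,c,d)}$: column $(n,k)$ has nonzeros only in rows $(m,k)$ with $|m-n|\le 1$. Hence the only nonzero blocks are those with block indices differing by at most one, giving block-bandwidths $(1,1)$. Within each block, the row index $k$ equals the column index $k$, so each block is diagonal, that is, it has bandwidths $(0,0)$. Symmetry comes from orthonormality: $(J_x)_{(m,\ell),(n,k)} = \langle H_{m,\ell}, xH_{n,k}\rangle_{W^{(a,b,c,d)}}$, which is symmetric in the two indices. Equivalently, the entry $(J(w_R^{(a,b,c+d+2k+1)}))_{[n-k,m-k]}$ is symmetric under swapping $n$ and $m$ for fixed $k$.

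The only point that needs care is the orthonormality of $H_{n,k}^{(a,b,c,d)}$ with respect to (\ref{eq:inproddef}), which the symmetry claim relies on. I would verify it by the change of variables $y=\rho t$ in (\ref{twointegrals}). The integral factors into $\langle \widetilde{P}_k,\widetilde{P}_\ell\rangle_{w_P^{(d,c)}}$ times the integral of $R_{n-k}R_{m-\ell}\,\rho^{k+\ell+1}w_R^{(a,b,c+d)}$, and for $k=\ell$ the second factor equals $\langle R_{n-k},R_{m-k}\rangle_{w_R^{(a,b,c+d+2k+1)}}$. Everything else is routine index bookkeeping.
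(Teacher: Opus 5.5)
Your proposal is correct and takes essentially the same route as the paper, which only remarks that the expansion and its coefficients follow from the three-term recurrence $x\,\mathbf{R}^{(a,b,c+d+2k+1)} = \mathbf{R}^{(a,b,c+d+2k+1)} J(w_R^{(a,b,c+d+2k+1)})$ applied to the $x$-factor of the Koornwinder formula, and then reads the block structure off the resulting matrix. Your treatment of the $n=k$ boundary term, the diagonal-block structure, and symmetry via the symmetric Jacobi matrix (or via orthonormality) fills in details the paper leaves implicit.
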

Equations (\ref{eq:multx}) and (\ref{eq:multxcoeffs}) can readily be  derived from the three-term recurrence satisfied by the OPs $\lbrace R_n^{(a,b,c+d+2k+1)}\rbrace$, see (\ref{eq:semiclassjacdef}). It follows from (\ref{eq:multx}) and (\ref{eq:multxcoeffs}) that $J_{x}^{(a,b,c,d)}$  is given by
 \begin{align*}
	J_{x}^{(a,b,c,d)} &= \begin{pmatrix}
                B^{x}_0 & A^{x}_0 & & & & \\
			\left(A^{x}_0\right)^{\top} & B^{x}_1 & A^{x}_1 & & & \\
			& \left(A^{x}_1\right)^{\top} & B^{x}_2 & A^{x}_2  & & & \\
			& & \left(A^{x}_2\right)^{\top} & \ddots & \ddots & \\
			& & & \ddots & \ddots & \ddots \\
	\end{pmatrix}
\end{align*}
where 
\begin{align*}
	A^x_n &:= \begin{pmatrix}
		l_{n,0}^{\,x, (n+1,0)} & 0 & \dots & 0 \\
			& \ddots & & \vdots \\
			& & l_{n,n}^{\,x, (n+1,n)} & 0 \\
			\end{pmatrix} \in \mathbb{R}^{(n+1)\times(n+2)}, \quad n = 0,1,2,\dots\\
	B^x_n &:= \begin{pmatrix}
		l_{n,0}^{\,x, (n,0)} & \\
			& \ddots & \\
			& & l_{n,n}^{\,x, (n,n)} \\
			\end{pmatrix} \in \mathbb{R}^{(n+1)\times(n+1)}, \quad n = 0,1,2,\dots.
  \end{align*}

Whereas multivariate orthogonal polynomials also have block-tridiagonal multiplication-by-$y$ operators, our non-graded bases only guarantees that they are block-banded:

\begin{proposition}\label{M_y}
  	The generalised Koornwinder polynomials satisfy
	\begin{align*} \label{M_y_r}
		y H_{n,k}^{(a,b,c,d)}(x,y) &= \sum_{m=n-\deg\rho^2 + 1}^{n+1}  l_{m,k+1}^{\,y, (n,k)} \, H_{m, k+1}^{(a,b,c,d)}(x, y)
		+  \sum_{m=n-1}^{n+\deg\rho^2-1}  l_{m,k-1}^{\,y, (n,k)} \, H_{m, k-1}^{(a,b,c,d)}(x, y)  \\ 
		& + \sum_{m=n-d_0}^{n+d_0}  l_{m,k}^{\,y, (n,k)} \, H_{m, k}^{(a,b,c,d)}(x, y),
	\end{align*}
	where 
 	\begin{align*}
		& \, l_{m,k+1}^{\,y, (n,k)} = \delta_{k}^{(d,c)} \, \left(\mathcal{T}_{(a,b,c+d+2k+1)}^{(a,b,c+d+2k+3)} \right)_{[m-k-1,n-k]}, \quad n-\deg\rho^2+1 \leq m \leq n+1,\\\\
		& \, l_{m,k-1}^{\,y, (n,k)} = \delta_{k-1}^{(d,c)} \, \left(\mathcal{T}_{(a,b,c+d+2k-1)}^{(a,b,c+d+2k+1)} \right)_{[n-k,m-k+1]} , \quad n-1 \leq m \leq n+\deg\rho^2-1,\\\\
		& \, l_{m,k}^{\,y, (n,k)} = \begin{cases}
			\textnormal{case (i):}\quad \gamma_{k}^{(d,c)} \times \left( \rho \left[J(w_R^{(a,b,c+d+2k+1)})\right] \right)_{[n-k,m-k]} , \quad n-d_0\leq m \leq n+d_0\\
			\textnormal{case (ii):}\quad 0
		\end{cases}.
	\end{align*}
Consequently, $J_{y}^{(a,b,c,d)}$ has block-bandwidths $(\deg \rho^2 -1, \deg\rho^2 - 1)$ and each block has bandwidths $(1, 1)$ and moreover, $J_{y}^{(a,b,c,d)}$ is symmetric.
 \end{proposition}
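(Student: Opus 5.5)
Write $\kappa_k := c+d+2k+1$ and $t := y/\rho(x)$, so that $H_{n,k}^{(a,b,c,d)} = R_{n-k}^{(a,b,\kappa_k)}(x)\,\rho^k\,\widetilde{P}_k^{(d,c)}(t)$ and $y = \rho\, t$. The plan is to multiply by $y$, apply the three-term recurrence (\ref{eq:jacthreeterm}) in the $t$ variable, and then re-expand each resulting $x$-factor in the semiclassical family required by the new index $k\pm1$ or $k$. This gives
\begin{equation*}
yH_{n,k}^{(a,b,c,d)} = \rho^{k+1} R_{n-k}^{(a,b,\kappa_k)}\Bigl(\delta_k^{(d,c)}\widetilde{P}_{k+1}^{(d,c)}(t) + \gamma_k^{(d,c)}\widetilde{P}_k^{(d,c)}(t) + \delta_{k-1}^{(d,c)}\widetilde{P}_{k-1}^{(d,c)}(t)\Bigr).
\end{equation*}
We treat the three terms separately.

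\emph{Term with $k+1$.} Here the factor $\rho^{k+1}$ is already correct, so we need to expand $R_{n-k}^{(a,b,\kappa_k)}$ in $\mathbf{R}^{(a,b,\kappa_{k+1})}$. This is the raising relation (\ref{SemiclassicalRaisingfomula}) with $\lambda_1=\lambda_2=0$ and $\lambda_3+\lambda_4=2$. Its matrix is the Cholesky factor of $\rho^2(J)$, as in (\ref{eq:raisechol}), and so has bandwidths $(0,\deg\rho^2)$. The coefficient of $R_{j}^{(a,b,\kappa_{k+1})}$ is therefore $\mathcal{T}_{[j,n-k]}$, nonzero only for $n-k-\deg\rho^2\le j\le n-k$. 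Setting $m=j+k+1$ gives the range $n-\deg\rho^2+1\le m\le n+1$ and the stated coefficient.

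\emph{Term with $k-1$.} Here we write $\rho^{k+1}=\rho^2\cdot\rho^{k-1}$ and need the expansion of $\rho^2 R_{n-k}^{(a,b,\kappa_k)}$ in $\mathbf{R}^{(a,b,\kappa_{k-1})}$. By orthogonality its $j$-th coefficient is
\begin{equation*}
\bigl\langle \rho^2 R_{n-k}^{(a,b,\kappa_k)}, R_j^{(a,b,\kappa_{k-1})}\bigr\rangle_{w_R^{(a,b,\kappa_{k-1})}} = \bigl\langle R_{n-k}^{(a,b,\kappa_k)}, R_j^{(a,b,\kappa_{k-1})}\bigr\rangle_{w_R^{(a,b,\kappa_k)}}.
\end{equation*}
The right-hand side is the $(n-k,j)$ entry of $\mathcal{T}_{(a,b,\kappa_{k-1})}^{(a,b,\kappa_k)}$, by the analogue of (\ref{eq:jacraise}). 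It is nonzero only for $n-k-\deg\rho^2\le\dots$; more precisely, upper triangularity with bandwidth $\deg\rho^2$ forces $n-k\le j\le n-k+\deg\rho^2$. With $m=j+k-1$ this becomes $n-1\le m\le n+\deg\rho^2-1$, as claimed.

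\emph{Term with $k$.} This term is present only in case (i); in case (ii) we have $\gamma_k=0$, so it vanishes. Here the extra factor $\rho$ is a polynomial of degree $d_0$, and $\rho R_{n-k}^{(a,b,\kappa_k)}$ expands in the same family via $\rho(J(w_R^{(a,b,\kappa_k)}))$, using (\ref{eq:polymult}) and (\ref{eq:semiclassjacdef}). That matrix has bandwidths $(d_0,d_0)$, which gives $n-d_0\le m\le n+d_0$.

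It remains to read off the structure of $J_y^{(a,b,c,d)}$. Within a block, only $k\to k\pm1$ and $k\to k$ occur, so each block has bandwidths $(1,1)$. For the block bandwidths, the extreme block shifts are $\deg\rho^2-1$ from the first two terms, and $d_0$ in case (i). Since $d_0\le 2d_0-1=\deg\rho^2-1$ whenever $d_0\ge1$, the case (i) term does not enlarge them, giving block-bandwidths $(\deg\rho^2-1,\deg\rho^2-1)$. Symmetry of $J_y^{(a,b,c,d)}$ follows directly from orthonormality of the $H_{n,k}$, since $\langle yH_{n,k},H_{m,\ell}\rangle_W=\langle H_{n,k},yH_{m,\ell}\rangle_W$; equivalently, one checks that the $k+1$ coefficients are the transposes of the $k-1$ coefficients.

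The main obstacle is the $k-1$ term: we must identify the coefficients of $\rho^2R$ as transpose entries of the raising matrix. The weight-transfer identity above does this, and the bandwidth of the Cholesky factor then gives the index range.
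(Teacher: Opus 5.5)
Your proof is correct and rests on the same ingredients as the paper's proof. The paper projects $yH_{n,k}^{(a,b,c,d)}$ onto $H_{m,j}^{(a,b,c,d)}$ using orthonormality and factors the integral via $t=y/\rho$, so that the three-term recurrence leaves only $j=k,k\pm1$; it then identifies the resulting $x$-integrals with entries of the raising matrices and of $\rho(J(w_R^{(a,b,c+d+2k+1)}))$, exactly as you do, whereas you substitute the recurrence directly and re-expand, which only spares you the appeal to Corollary~\ref{cor:Koornspandeg}. Do delete the aborted phrase ``nonzero only for $n-k-\deg\rho^2\le\dots$'' in the $k-1$ term, since the correct range is the $n-k\le j\le n-k+\deg\rho^2$ that you state next.
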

\begin{proof}
Since generalised Koornwinder polynomials span the space of bivariate polynomials (see Corollary~\ref{cor:Koornspandeg}), there exists a finite expansion such that 
 	\begin{equation*}
		y H_{n,k}^{(a,b,c,d)}(x,y) = \sum_{m \geq 0} \sum_{j = 0}^{m} l_{m,j}^{\,y, (n,k)} \, H_{m,j}^{(a,b,c,d)}(x,y).  
	\end{equation*}
Since the generalised Koornwinder polynomials are orthonormal, using (\ref{KoornwinderFomula})--(\ref{eq:inproddef}), (\ref{eq:jacthreeterm})  and the change of variable $t = y/\rho(x)$, we have that
 	\begin{align}
		l_{m,j}^{\,y, (n,k)} &= \langle y H_{n,k}^{(a,b,c,d)},  H_{m,j}^{(a,b,c,d)}\rangle_{W^{(a,b,c,d)}}
		=  \iint_\Omega H_{n,k}^{(a,b,c,d)}(x,y) \, H_{m,j}^{(a,b,c,d)}(x,y) \, y \, W^{(a,b,c,d)}(x,y) \, \mathrm{d}y \, \mathrm{d}x \nonumber \\
		&= \Big( \int^\beta_\alpha R_{n-k}^{(a, b, c+d+2k+1)}(x) \, R_{m-j}^{(a, b, c+d+2j+1)}(x) \, \rho(x)^{k+j+2} \, w_R^{(a, b, c+d)}(x) \, \mathrm{d}x \Big) \nonumber \\
		&\quad \quad \quad \quad \quad\cdot \, \Big( \int^{\delta}_{\gamma} \widetilde{P}_k^{(d,c)}(t) \, \widetilde{P}_j^{(d,c)}(t) \, t \, w_P^{(d,c)}(t) \, \mathrm{d}t \Big) \nonumber \\
		&= \Big( \int^\beta_\alpha R_{n-k}^{(a, b, c+d+2k+1)}(x) \, R_{m-j}^{(a, b, c+d+2j+1)}(x) \, w_R^{(a, b, c+d+k+j+2)}(x) \, \mathrm{d}x \Big) \nonumber \\
		&\quad \quad \quad \quad \quad\cdot \, \Big( \int^{\delta}_{\gamma} \widetilde{P}_k^{(d,c)}(t) \, \widetilde{P}_j^{(d,c)}(t) \, t \, w_P^{(d,c)}(t) \, \mathrm{d}t \Big) \nonumber \\
		&=
		\begin{cases}
			\delta_k^{(d,c)} \, \langle R_{n-k}^{(a, b, c+d+2k+1)}, R_{m-k-1}^{(a, b, c+d+2k+3)}\rangle_{w_R^{(a, b, c+d+2k+3)}}  \quad& \text{if } j = k+1 \\
			\delta_{k-1}^{(d,c)} \, \langle R_{n-k}^{(a, b, c+d+2k+1)}, R_{m-k+1}^{(a, b, c+d+2k-1)}\rangle_{w_R^{(a, b, c+d+2k+1)}} \quad& \text{if } j = k-1 \\
			\gamma_k^{(d,c)} \, \langle R_{n-k}^{(a, b, c+d+2k+1)}, \rho(x) \,R_{m-k}^{(a, b, c+d+2k+1)}\rangle_{w_R^{(a, b, c+d+2k+1)}} \quad& \text{if } j = k \\
			0 & \text{otherwise}
		\end{cases}  \label{eq:multycases}
	\end{align}
         	The first two inner products in (\ref{eq:multycases}) are entries of the raising matrices $\mathcal{T}_{(a,b,c+d+2k+1)}^{(a,b,c+d+2k+3)}$ and $\mathcal{T}_{(a,b,c+d+2k-1)}^{(a,b,c+d+2k+1)}$, respectively (see Section~\ref{SemiclassicalRaising}).  Since these raising matrices have bandwidths $(0, \deg\rho^2)$, for $j=k+1$ the coefficients $l_{m,j}^{\,y, (n,k)}$ can only be nonzero for $n-\deg\rho^2+1\leq m \leq n+1$.  For $j= k-1$, the coefficients can be nonzero only for $n-1\leq m \leq n+\deg\rho^2-1$.
   	In case (i), the third inner product are the entries of $\rho \left(J(w_R^{(a,b,c+d+2k+1)})\right)$, which has bandwidths $(d_0, d_0)$; hence the coefficients $l_{m,k}^{\,y, (n,k)}$ can only be nonzero  for $n-d_0 \leq m \leq n+d_0$.
	In case (ii), since $c=d$ and $\gamma_k^{(d,c)} = 0$, it follows that $l_{m,k}^{\,y, (n,k)} = 0$.
	
	The symmetry of $J_{y}^{(a,b,c,d)}$ follows from the orthonormality of the generalised Koornwinder polynomials and the fact that multiplication-by-$y$ is self-adjoint, i.e., $\langle y H_{n,k}^{(a,b,c,d)},  H_{m,j}^{(a,b,c,d)}\rangle_{W^{(a,b,c,d)}} = \langle  H_{n,k}^{(a,b,c,d)},  y H_{m,j}^{(a,b,c,d)}\rangle_{W^{(a,b,c,d)}}$.
\end{proof}

\subsection{Differentiation matrices of generalised Koornwinder polynomials}\label{sec:Kdiffmats}
We denote the weighted generalised Koornwinder polynomials by 
\begin{align}
	\mathbf{W}^{(a,b,c,d)}(x,y) := W^{(a,b,c,d)}(x,y) \, \mathbf{H}^{(a,b,c,d)}(x,y),  \label{eq:WKoorn}
\end{align}
 and	define the differentiation matrices $D_{x}^{(a,b,c,d)},  D_y^{(a,b,c,d)},  W_{x}^{(a,b,c,d)}$ and $W_y^{(a,b,c,d)}$ by
	\begin{align}
		{\partial  \over \partial x}\mathbf{H}^{(a,b,c,d)}(x,y) &= \mathbf{H}^{(a+1,b+1,c+1,d+1)}(x,y) \: D_{x}^{(a,b,c,d)}, \label{eq:PartialXdef}\\
		{\partial  \over \partial y}\mathbf{H}^{(a,b,c,d)}(x,y) &= \mathbf{H}^{(a,b, c+1,d+1)}(x,y) \: D_y^{(a,b,c,d)}, \\
		{\partial \over \partial x}\mathbf{W}^{(a,b,c,d)}(x,y) &= \mathbf{W}^{(a-1,b-1,c-1,d-1)}(x,y) \: W_{x}^{(a,b,c,d)}, \\
		{\partial \over \partial y}\mathbf{W}^{(a,b,c,d)}(x,y) &= \mathbf{W}^{(a,b,c-1,d-1)}(x,y) \: W_y^{(a,b,c,d)}. \label{eq:Partialdef}
	\end{align}
 The incrementing and decrementing of parameters here is analogous to other well-known bivariate OP families' derivatives \cite{SnowballOlver,TriangleOPs}.
\begin{proposition}\label{thm:partialy}
 The generalised Koornwinder polynomials satisfy
	\begin{equation} \label{eq:partyformula}
		{\partial  \over \partial y}H_{n,k}^{(a,b,c,d)}(x,y) = d^{(d+1,c+1)}_{k-1}\,H_{n-1,k-1}^{(a,b,c+1,d+1)}(x,y),  
	\end{equation}
 where	 $d^{(d+1,c+1)}_{k-1}$ is given in (\ref{ClassicalDerivative}), and thus $D_y^{(a,b,c,d)}$ has block-bandwidths $(-1, 1)$ and each block has bandwidths $(-1, 1)$.
\end{proposition}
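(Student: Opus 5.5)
Differentiate the product formula (\ref{KoornwinderFomula}) directly in $y$, with $x$ held fixed. Since $R_{n-k}^{(a,b,c+d+2k+1)}(x)$ and $\rho(x)^k$ do not depend on $y$, the chain rule gives
\[
{\partial \over \partial y}H_{n,k}^{(a,b,c,d)}(x,y) = R_{n-k}^{(a,b,c+d+2k+1)}(x)\,\rho(x)^{k-1}\,\Big(\frac{\mathrm{d}}{\mathrm{d}t}\widetilde{P}_k^{(d,c)}\Big)\Big(\frac{y}{\rho(x)}\Big).
\]
Applying (\ref{ClassicalDerivative}) replaces the derivative by $d^{(d+1,c+1)}_{k-1}\widetilde{P}^{(d+1,c+1)}_{k-1}(y/\rho(x))$. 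In case (ii) $c=d$, so the parameter pair stays symmetric and $w_P^{(d+1,c+1)}$ is still even, as the definition requires.

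Next, identify the result with a Koornwinder polynomial for shifted parameters. By (\ref{KoornwinderFomula}) with $(a,b,c+1,d+1)$ and indices $(n-1,k-1)$,
\[
H_{n-1,k-1}^{(a,b,c+1,d+1)} = R_{(n-1)-(k-1)}^{(a,b,(c+1)+(d+1)+2(k-1)+1)}(x)\,\rho^{k-1}\,\widetilde{P}^{(d+1,c+1)}_{k-1}(y/\rho).
\]
Here $(n-1)-(k-1)=n-k$, and $(c+1)+(d+1)+2(k-1)+1=c+d+2k+1$. So the semiclassical factor is exactly the same polynomial $R_{n-k}^{(a,b,c+d+2k+1)}$, with no connection or raising matrix needed, and this gives (\ref{eq:partyformula}).

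For $k=0$ the derivative is zero, which matches the convention that coefficients with negative indices vanish ($d_{-1}=0$). Note $H_{n-1,k-1}$ with $0\le k-1\le n-1$ is always a legitimate index pair when $1\le k\le n$.

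Sparsity then follows by reading off indices. Column $(n,k)$ of $D_y^{(a,b,c,d)}$ has a single nonzero entry, in row $(n-1,k-1)$. In block terms, block column $n$ maps only into block row $n-1$, so the block bandwidths are $(-1,1)$. Inside that block, entry $k$ maps to entry $k-1$, a strictly superdiagonal position, so each block has bandwidths $(-1,1)$.

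There is no real obstacle here. The only step needing care is the parameter bookkeeping: the exponent $c+d+2k+1$ must be invariant under $(c,d,k)\mapsto(c+1,d+1,k-1)$, and this invariance is why the $y$-derivative is so sparse, unlike $\partial_x$.
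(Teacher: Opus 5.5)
Your argument is correct, and it is more direct than the paper's. The paper proves (\ref{eq:partyformula}) with the same template it uses for $J_y^{(a,b,c,d)}$. It expands $\partial_y H_{n,k}^{(a,b,c,d)}$ in the orthonormal basis $\{H_{m,j}^{(a,b,c+1,d+1)}\}$, which is justified by the spanning property, and computes each coefficient $c_{m,j}^{\,y,(n,k)}$ as an inner product. After the change of variable $t=y/\rho$ the integral factors. The $t$-integral forces $j=k-1$ by orthogonality of $\widetilde{P}^{(d+1,c+1)}$. In the $x$-integral the weight collapses to $\rho^{2k-1}w_R^{(a,b,c+d+2)}=w_R^{(a,b,c+d+2k+1)}$, which forces $m=n-1$.

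You instead recognise the differentiated product as a Koornwinder polynomial outright, so you need neither orthogonality nor the spanning result. Your route also makes explicit the reason for the sparsity: $c+d+2k+1$ is invariant under $(c,d,k)\mapsto(c+1,d+1,k-1)$. In the paper's route that invariance is hidden inside the weight collapse of the $x$-integral.

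What the paper's route buys is uniformity. For $\partial_x$, multiplication by $y$ and the conversion matrices, the semiclassical index does change, so no pointwise identification is available. There the coefficients must be extracted through inner products and univariate operator matrices, exactly as in that template.
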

\begin{proof}
Similar to the proof of Proposition~\ref{M_y}, the result follows from calculating the expansion coefficients $c_{m,j}^{\,y, (n,k)}$, where
\begin{equation}
		{\partial  \over \partial y} H_{n,k}^{(a,b,c,d)} = \sum_{m \geq 0}\sum_{j=0}^m c_{m,j}^{\,y, (n,k)} H_{m,j}^{(a,b,c+1,d+1)}, \quad c_{m,j}^{\,y, (n,k)} = \left\langle {\partial  \over \partial y} H_{n,k}^{(a,b,c,d)},  H_{m,j}^{(a, b, c+1,d+1)} \right\rangle_{W^{(a, b, c+1,d+1)}},  
		\label{eq:partyexp}
	\end{equation}
and using (\ref{ClassicalDerivative}).
                \end{proof}

\begin{proposition}
 Let $a, b, c, d > 0$, then we have that
	\begin{align*}
		{\partial  \over \partial y}[W^{(a,b,c,d)}(x,y)\,H_{n,k}^{(a,b,c,d)}(x,y)] = - d^{(d,c)}_{k}\,W^{(a,b,c-1,d-1)}(x,y)\,H_{n+1,k+1}^{(a,b,c-1,d-1)}(x,y),
	\end{align*}
and thus $W_y^{(a,b,c,d)}$ has block-bandwidths $(1, -1)$ and each block has bandwidths $(1, -1)$.
\end{proposition}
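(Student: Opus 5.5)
The plan is to reduce everything to the classical weighted-derivative (lowering) identity for Jacobi polynomials in the variable $t=y/\rho(x)$. At fixed $x$, the $x$-dependent factors of $W^{(a,b,c,d)}H_{n,k}^{(a,b,c,d)}$ are constants with respect to $y$. First write
\[
W^{(a,b,c,d)}(x,y)=w_R^{(a,b,c+d)}(x)\,w_P^{(d,c)}\!\left(\tfrac{y}{\rho(x)}\right),
\]
which holds because $(y-\gamma\rho)^c(\delta\rho-y)^d=\rho^{c+d}(t-\gamma)^c(\delta-t)^d$. Then
\[
W^{(a,b,c,d)}H_{n,k}^{(a,b,c,d)}=w_R^{(a,b,c+d)}(x)\,R_{n-k}^{(a,b,c+d+2k+1)}(x)\,\rho(x)^k\,\big[w_P^{(d,c)}\widetilde P_k^{(d,c)}\big](y/\rho(x)).
\]
Differentiating in $y$ only touches the bracket and produces a factor $\rho^{-1}$, so we get $\rho^{k-1}\,\frac{\mathrm d}{\mathrm dt}\big[w_P^{(d,c)}\widetilde P_k^{(d,c)}\big](t)$.

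Next I invoke the classical weighted differentiation formula for orthonormal Jacobi polynomials on $[\gamma,\delta]$:
\[
\frac{\mathrm d}{\mathrm dt}\big[w_P^{(d,c)}\widetilde P_k^{(d,c)}\big]=-d_k^{(d,c)}\,w_P^{(d-1,c-1)}\widetilde P_{k+1}^{(d-1,c-1)}.
\]
This is the adjoint of (\ref{ClassicalDerivative}) with parameters shifted by one. To verify it, note that the left side equals $w_P^{(d-1,c-1)}$ times a polynomial of degree $k+1$, since $(w_P^{(d,c)})'=w_P^{(d-1,c-1)}\times(\text{degree }1)$. Integration by parts (boundary terms vanish because $c,d>0$) gives
\[
\langle \cdot,\widetilde P_j^{(d-1,c-1)}\rangle_{w_P^{(d-1,c-1)}}=-\langle \widetilde P_k^{(d,c)},\tfrac{\mathrm d}{\mathrm dt}\widetilde P_j^{(d-1,c-1)}\rangle_{w_P^{(d,c)}}=-d^{(d,c)}_{j-1}\delta_{k,j-1}.
\]
So the only coefficient is the one with $j=k+1$, and it equals $-d^{(d,c)}_k$.

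Finally I reassemble the factors. Write
\[
w_R^{(a,b,c+d)}\rho^{k-1}w_P^{(d-1,c-1)}(t)=w_R^{(a,b,c+d-2)}\,w_P^{(d-1,c-1)}(t)\,\rho^{k+1}=W^{(a,b,c-1,d-1)}\rho^{k+1}.
\]
The $x$-polynomial $R_{n-k}^{(a,b,c+d+2k+1)}$ is exactly $R_{(n+1)-(k+1)}^{(a,b,(c-1)+(d-1)+2(k+1)+1)}$, since the third parameter $c+d+2k+1$ is unchanged. Hence the result is $-d_k^{(d,c)}W^{(a,b,c-1,d-1)}H_{n+1,k+1}^{(a,b,c-1,d-1)}$. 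Mapping $(n,k)\mapsto(n+1,k+1)$ means each column of $W_y^{(a,b,c,d)}$ has a single nonzero in block row $n+1$, at position $k+1$, which gives block-bandwidths $(1,-1)$ and within-block bandwidths $(1,-1)$. In case (ii) with $c=d$ the same computation applies with $w_P^{(c)}$.

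The main obstacle is the bookkeeping in the last step: checking that the powers of $\rho$ combine to exactly $\rho^{k+1}$ and that the semiclassical parameter matches with no connection matrix needed. I would also double-check that the sign and the index of $d_k^{(d,c)}$ agree with the normalisation convention in (\ref{ClassicalDerivative}) via the integration-by-parts computation above.
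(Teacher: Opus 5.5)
Your proof is correct, but it follows a different route from the paper's. The paper works at the bivariate level. It observes that $\{W^{(a,b,c,d)}H_{n,k}^{(a,b,c,d)}\}$ is orthonormal with respect to $W^{(-a,-b,-c,-d)}$ and expands $\partial_y\bigl(W^{(a,b,c,d)}H_{n,k}^{(a,b,c,d)}\bigr)$ in the weighted basis $\{W^{(a,b,c-1,d-1)}H_{m,j}^{(a,b,c-1,d-1)}\}$. It then integrates by parts in $y$ over $\Omega$, with boundary terms vanishing since $c,d>0$, and invokes the already-proved unweighted formula of Proposition~\ref{thm:partialy} for $\partial_y H_{m,j}^{(a,b,c-1,d-1)}$. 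In effect this shows $W_y^{(a,b,c,d)} = -\bigl(D_y^{(a,b,c-1,d-1)}\bigr)^{\top}$.

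You instead push the integration by parts down to one variable. Since $\partial_y$ acts only through $t=y/\rho$, you prove the univariate Jacobi lowering identity directly from (\ref{ClassicalDerivative}). You then reassemble the separable product, checking that the powers of $\rho$ combine to $\rho^{k+1}$ and that the semiclassical parameter $c+d+2k+1$ is unchanged.

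What your route buys: it is a direct pointwise identity, so it needs neither bivariate orthogonality nor the spanning fact the paper uses implicitly. That fact is that $\partial_y\bigl(W^{(a,b,c,d)}H_{n,k}^{(a,b,c,d)}\bigr)/W^{(a,b,c-1,d-1)}$ is a polynomial and hence lies in the span of the generalised Koornwinder basis. Your degree-$(k+1)$ remark settles the analogous completeness issue in one variable.

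What the paper's route buys: its adjoint argument exposes the transpose relation between weighted and unweighted derivative matrices. It is also the template reused for $W_x^{(a,b,c,d)}$ in Appendix~\ref{AppendixB}, where $\partial_x$ acts on every factor and no clean separation like yours is available.
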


\begin{proof}
We first note that 
   \begin{align*}
		\langle \,W^{(a,b,c,d)} \, H_{n,k}^{(a,b,c,d)} , \, W^{(a,b,c,d)} \, H_{m,j}^{(a,b,c,d)} \,\rangle_{W^{(-a,-b,-c,-d)}} = \langle \,H_{n,k}^{(a,b,c,d)} , \, H_{m,j}^{(a,b,c,d)} \,\rangle_{W^{(a,b,c,d)}} = \delta_{n,m}\,\delta_{k,j},
   \end{align*}
   which shows that $\{W^{(a,b,c,d)}\,H_{n,k}^{(a,b,c,d)}\}$ are mutually orthonormal with respect to $W^{(-a,-b,-c,-d)}$. 
 Therefore, the weighted derivative has the expansion  
    \begin{equation*}
	{\partial  \over \partial y}[W^{(a,b,c,d)}(x,y)\,H_{n,k}^{(a,b,c,d)}(x,y)] = \sum_{m \geq 0}\sum_{j = 0}^{m} \bar{c}_{m,j}^{\,y, (n,k)} \,W^{(a,b,c-1,d-1)}(x,y)\,H_{m,j}^{(a,b,c-1,d-1)}(x,y).
   \end{equation*}
where, using integration-by-parts, (\ref{eq:partyformula}) and (\ref{eq:partyexp}),
    \begin{align*}
		\bar{c}_{m,j}^{\,y, (n,k)} &= \left\langle \,{\partial  \over \partial y}(W^{(a,b,c,d)}\,H_{n,k}^{(a,b,c,d)}) , \, W^{(a,b,c-1,d-1)}\,H_{m,j}^{(a,b,c-1,d-1)} \,\right\rangle_{W^{(-a,-b,1-c,1-d)}} \\
	   &=  \int_\alpha^\beta \int_{\gamma\rho(x)}^{\delta\rho(x)}\: {\partial  \over \partial y}[W^{(a,b,c,d)}(x,y)\,H_{n,k}^{(a,b,c,d)}(x,y)] \: H_{m,j}^{(a,b,c-1,d-1)}(x,y) \: \mathrm{d}y \: \mathrm{d}x  \\
	   &=  - \int_\alpha^\beta \int_{\gamma\rho(x)}^{\delta\rho(x)}\: {\partial  \over \partial y}H_{m,j}^{(a,b,c-1,d-1)}(x,y) \: [H_{n,k}^{(a,b,c,d)}(x,y)\:W^{(a,b,c,d)}(x,y)]  \: \mathrm{d}y \: \mathrm{d}x  \\
	   &= - \left\langle \,{\partial  \over \partial y}H_{m,j}^{(a,b,c-1,d-1)}, \, H_{n,k}^{(a,b,c,d)} \, \right\rangle_{W^{(a,b,c,d)}} = - d^{(d,c)}_{k}\,\delta_{n,m-1}\,\delta_{k,j-1}, 
   \end{align*}	
   and thus $\bar{c}_{m,j}^{\,y, (n,k)}$ can only be nonzero when $m=n+1$ and $j=k+1$.
  \end{proof}

The proof of the following two theorems is presented in Appendix~\ref{AppendixB}.

\begin{proposition} \label{Diff_matrix}
The generalised Koornwinder polynomials satisfy
 	\begin{align*}
		{\partial  \over \partial x}&H_{n,k}^{(a,b,c,d)}(x,y) = \sum_{m=n-\deg\rho^2-1}^{n-1}  c_{m,k}^{\,x, (n,k)} \, H_{m, k}^{(a+1,b+1,c+1,d+1)}(x, y) \\ 
		&+  \sum_{m=n-3}^{n+\deg\rho^2-3}  c_{m,k-2}^{\,x, (n,k)} \, H_{m, k-2}^{(a+1,b+1,c+1,d+1)}(x, y)  + \sum_{m=n-d_0-2}^{n+d_0-2}  c_{m,k-1}^{\,x, (n,k)} \, H_{m, k-1}^{(a+1,b+1,c+1,d+1)}(x, y).
	\end{align*}
The expressions for $c_{m,k}^{\,x, (n,k)}$ and $c_{m,k-2}^{\,x, (n,k)}$  are given in, respectively, (\ref{eq:partialxcmk}) and (\ref{remark}); in case (i), $c_{m,k-1}^{\,x, (n,k)}$ is given by (\ref{eq:partialxcmkm1}) and in case (ii), these coefficients are zero, $c_{m,k-1}^{\,x, (n,k)} = 0$.
 Therefore, $D_x^{(a,b,c,d)}$ has block-bandwidths $(\deg\rho^2 - 3, \deg\rho^2 + 1)$ and each block has bandwidths $(0,2)$.
\end{proposition}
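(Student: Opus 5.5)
I will follow the template of the proof of Proposition~\ref{M_y}: expand $\partial_x H_{n,k}^{(a,b,c,d)}$ in the orthonormal basis $\mathbf{H}^{(a+1,b+1,c+1,d+1)}$. The expansion is finite by Corollary~\ref{cor:Koornspandeg}. Its coefficients are
\[
c_{m,j}^{\,x,(n,k)}=\big\langle \partial_x H_{n,k}^{(a,b,c,d)},\,H_{m,j}^{(a+1,b+1,c+1,d+1)}\big\rangle_{W^{(a+1,b+1,c+1,d+1)}} .
\]
First I differentiate the product in (\ref{KoornwinderFomula}) with $t=y/\rho$, so that $\partial_x t=-t\rho'/\rho$. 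This gives
\[
\partial_x H_{n,k}=\big(R_{n-k}'\rho^k+kR_{n-k}\rho^{k-1}\rho'\big)\widetilde P_k(t)-R_{n-k}\rho^{k-1}\rho'\,t\,\widetilde P_k'(t),
\]
where I abbreviate $R_{n-k}=R_{n-k}^{(a,b,c+d+2k+1)}$ and $\widetilde P_k=\widetilde P_k^{(d,c)}$. Changing variables to $t$ in the inner product splits each coefficient into an $x$-integral times a $t$-integral against $w_P^{(d+1,c+1)}$, exactly as in (\ref{eq:multycases}).

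For the $t$-factors I use (\ref{ClassicalRaising}): $\widetilde P_k^{(d,c)}$ lies in the span of $\widetilde P_{k}^{(d+1,c+1)},\widetilde P_{k-1}^{(d+1,c+1)},\widetilde P_{k-2}^{(d+1,c+1)}$. I also use that $t\widetilde P_k'(t)$ is a polynomial of degree $k$, which expands in $\widetilde P^{(d+1,c+1)}_{j}$ for $j\le k$. Combined with $\widetilde P_k'\propto \widetilde P_{k-1}^{(d+1,c+1)}$ from (\ref{ClassicalDerivative}), this product is orthogonal to all $\widetilde P_j^{(d+1,c+1)}$ with $j<k-2$. The reason is that $\langle t\widetilde P^{(d+1,c+1)}_{k-1},\widetilde P^{(d+1,c+1)}_j\rangle$ vanishes unless $|j-(k-1)|\le1$, by (\ref{eq:jacthreeterm}). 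Hence only $j\in\{k,k-1,k-2\}$ survive. In case (ii) every $\widetilde P$ has definite parity, so $j=k-1$ drops out; this matches the claim $c_{m,k-1}^{\,x,(n,k)}=0$.

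For the $x$-factors I treat each $j$ separately, writing $\rho^{k}\cdot\rho^{j}\,w_R^{(a+1,b+1,c+d+2)}$-type weights as semiclassical weights. For $j=k$, the factor $kR\rho^{k-1}\rho'$ cancels against part of the $t\widetilde P'$ term, since $\langle t\widetilde P_k',\widetilde P_k^{(d+1,c+1)}\rangle$ is proportional to $k$. What remains should be a combination of $\langle R'_{n-k}, R_{m-k}^{(a+1,b+1,c+d+2k+3)}\rangle$ and $\rho'$-weighted terms. The first is controlled by $D_\eta$ (Proposition~\ref{bandwidths_diff}); raising matrices $\mathcal T$ from Section~\ref{SemiclassicalRaising} handle the weight shift. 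I will then read off the range of $m-k$ relative to $n-k$ from the bandwidths of these factors, aiming for $n-\deg\rho^2-1\le m\le n-1$.

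For $j=k-2$, the weight becomes $\rho^{2k-1}$ times a $(a+1,b+1)$ factor. The $x$-integrand is a polynomial combination of $R_{n-k}$, $R'_{n-k}$, $\rho$ and $\rho'$ against $R_{m-k+2}^{(a+1,b+1,c+d+2k-1)}$. I bound its nonzero range by degree counting on one side and by orthogonality of $R_{n-k}$ on the other; the latter requires $\sigma_S$ to absorb the $(a+1,b+1)$ shift. For $j=k-1$ in case (i), the factors are $\rho'$ and $\rho$ of degree $\le d_0$, which gives the range $n-d_0-2\le m\le n+d_0-2$.

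The main obstacle is the exact bookkeeping in the $j=k$ and $j=k-2$ terms. There the $R'$ term and the $\rho'$ term each individually have wider bandwidth than claimed. The tight bounds $\deg\rho^2+1$ above and exactly $-1$ below rely on cancellations, and on $w_R^{(a+1,b+1,\cdot)}$ absorbing $\sigma_S$. I would first try to write each combination as $\sigma_S\times$(derivative of $\rho^{k}R_{n-k}$ times a power of $\rho$), then apply the Pearson equation (\ref{PearsonEq}) via integration by parts to get an exact degree bound. Finally, the block-bandwidth statement $(\deg\rho^2-3,\deg\rho^2+1)$, with each block having bandwidths $(0,2)$, follows by collecting the ranges of $m$ for $j\in\{k,k-1,k-2\}$.
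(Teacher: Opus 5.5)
Your set-up follows the paper: you separate variables with $t=y/\rho$, use (\ref{ClassicalRaising})--(\ref{ClassicalDerivative}) to restrict to $j\in\{k,k-1,k-2\}$, and use parity to remove $j=k-1$ in case (ii). Your one-dimensional degree counting also gives the upper limits on $m$ correctly. But you have placed the difficulty in the wrong case.

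\textbf{The case $j=k$ needs no cancellation.} Nothing of the $\rho'$ term survives. The polynomial $k\widetilde P_k^{(d,c)}-t\widetilde P_k^{(d,c)\prime}$ has degree at most $k-1$, so it is orthogonal to $\widetilde P_k^{(d+1,c+1)}$. Hence $c_{m,k}^{\,x,(n,k)}$ is exactly (\ref{eq:partialxcmk}). The bandwidths $(-1,1+\deg\rho^2)$ of $\mathcal T D_\eta$ then give $n-\deg\rho^2-1\le m\le n-1$ directly.

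\textbf{The real gap is the lower limit $m\ge n-3$ for $j=k-2$.} You only say this relies on ``cancellations''. This is not a question of tightness. Write $R_{n-k}=R_{n-k}^{(a,b,c+d+2k+1)}$ and $q=R^{(a+1,b+1,c+d+2k-1)}_{m-k+2}$.
\begin{itemize}
\item Integrating $\langle R_{n-k}',q\rangle_{w_R^{(a+1,b+1,c+d+2k+1)}}$ by parts leaves a term carrying $\rho'/\rho$ relative to the weight of $R_{n-k}$. So you cannot invoke the orthogonality of $R_{n-k}$.
\item The $\rho'$-integral has the same defect.
\item So in general neither $x$-integral has a lower cut-off in $m$. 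Without one, $D_x^{(a,b,c,d)}$ would not be block-banded at all.
\end{itemize}

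\textbf{The missing ingredient is an identity.} You need
\[
\bigl\langle k\widetilde P_k^{(d,c)}-t\widetilde P_k^{(d,c)\prime},\widetilde P^{(d+1,c+1)}_{k-2}\bigr\rangle_{w_P^{(d+1,c+1)}}=(c+d+2k+1)\bigl\langle \widetilde P_k^{(d,c)},\widetilde P^{(d+1,c+1)}_{k-2}\bigr\rangle_{w_P^{(d+1,c+1)}},
\]
which follows by integrating by parts in $t$ and comparing leading coefficients.
\begin{itemize}
\item It turns the $x$-integrand into the total derivative $-s^{(d+1,c+1)}_{k-2}\,(\rho^{c+d+2k+1}R_{n-k})'\,q\,w_S^{(a+1,b+1)}$.
\item One more integration by parts, using the classical Pearson equation, gives $s^{(d+1,c+1)}_{k-2}\langle R_{n-k},\sigma_S q'+\tau_S q\rangle_{w_R^{(a,b,c+d+2k+1)}}$.
\item This pairs $R_{n-k}$ with a polynomial of degree $m-k+3$, hence $m\ge n-3$.
\end{itemize}
Your plan names the right mechanism (total derivative plus Pearson). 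However, it does not fix the exponent $c+d+2k+1$, nor prove that the two $t$-coefficients stand in this ratio, and that is the whole content of the step.

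\textbf{How the paper avoids this.} It integrates by parts in two dimensions before separating variables, writing $c^{\,x,(n,k)}_{m,\ell}=-\langle H^{(a,b,c,d)}_{n,k},p\rangle_{W^{(a,b,c,d)}}$. Here $p=\partial_x\bigl(W^{(a+1,b+1,c+1,d+1)}H^{(a+1,b+1,c+1,d+1)}_{m,\ell}\bigr)/W^{(a,b,c,d)}$ is a polynomial. It then reads off the range of $m$ for all three values of $\ell$ from (\ref{eq:case1monotokoorn}) and (\ref{eq:case2monotokoorn}), so the cancellation is automatic.
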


\begin{proposition} \label{weighted_Diff_matrix}
	Let $a, b, c, d > 0$, then we have that
 	\begin{align*} 
		&{\partial  \over \partial x}[W^{(a,b,c,d)}(x,y)\,H_{n,k}^{(a,b,c,d)}(x,y)] = \sum_{m=n+1}^{n+\deg\rho^2+1} \bar{c}_{m,k}^{\,x, (n,k)}\,W^{(a-1,b-1,c-1,d-1)}(x,y)\,H_{m,k}^{(a-1,b-1,c-1,d-1)}(x,y) \nonumber \\
		& \qquad \qquad \qquad  + \sum_{m=n-\deg\rho^2+3}^{n+3} \bar{c}_{m,k+2}^{\,x, (n,k)}\,W^{(a-1,b-1,c-1,d-1)}(x,y)\,H_{m,k+2}^{(a-1,b-1,c-1,d-1)}(x,y) \nonumber \\
		& \qquad \qquad \qquad  + \sum_{m=n-d_0+2}^{n+d_0+2} \bar{c}_{m,k+1}^{\,x, (n,k)}\,W^{(a-1,b-1,c-1,d-1)}(x,y)\,H_{m,k+1}^{(a-1,b-1,c-1,d-1)}(x,y).
	\end{align*}
Expressions for the coefficients $\bar{c}_{m,k}^{\,x, (n,k)}, \bar{c}_{m,k+2}^{\,x, (n,k)}$ and $\bar{c}_{m,k+1}^{\,x, (n,k)}$ are given in (\ref{eq:partialxwcmk})--(\ref{eq:partialxwcmkp1}).
 Hence, $W_x^{(a,b,c,d)}$ has block-bandwidths $(\deg \rho^2 + 1, \deg\rho^2 - 3)$ and each block has bandwidths $(2,0)$.
\end{proposition}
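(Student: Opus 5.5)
The plan is to reduce the statement to Proposition~\ref{Diff_matrix} by integration by parts, in the same way as the proof of the weighted $y$-derivative proposition.

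\textbf{Step 1: the weighted derivative has a finite expansion.} First I check that ${\partial \over \partial x}[W^{(a,b,c,d)}H_{n,k}^{(a,b,c,d)}]$ equals $W^{(a-1,b-1,c-1,d-1)}$ times a bivariate polynomial. Differentiating $(\beta-x)^a(x-\alpha)^b(y-\gamma\rho)^c(\delta\rho-y)^d$ with respect to $x$ leaves a factor $W^{(a-1,b-1,c-1,d-1)}$ times a function built from $\sigma_S$, $\rho'$ and the two linear-in-$y$ factors.
\begin{itemize}
\item In case (i) this cofactor is clearly a polynomial.
\item In case (ii) we have $c=d$ and $\gamma=-\delta$, so $(y-\gamma\rho)(\delta\rho-y)=\delta^2\rho^2-y^2$. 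Its $x$-derivative $\delta^2(\rho^2)'$ is a polynomial.
\end{itemize}
By Corollary~\ref{cor:Koornspandeg}, the polynomial cofactor has a finite expansion in $\mathbf{H}^{(a-1,b-1,c-1,d-1)}$.

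The family $\{W^{(a-1,b-1,c-1,d-1)}H_{m,j}^{(a-1,b-1,c-1,d-1)}\}$ is orthonormal with respect to $W^{(1-a,1-b,1-c,1-d)}$. Hence the coefficients are
\[
\bar c_{m,j}^{\,x,(n,k)}=\iint_\Omega {\partial \over \partial x}\big[W^{(a,b,c,d)}H_{n,k}^{(a,b,c,d)}\big]\,H_{m,j}^{(a-1,b-1,c-1,d-1)}\,\mathrm{d}y\,\mathrm{d}x .
\]

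\textbf{Step 2: integrate by parts.} I apply the divergence theorem on $\Omega$. This is needed rather than a plain one-dimensional integration by parts, because the $y$-limits $\gamma\rho(x),\delta\rho(x)$ depend on $x$. The boundary term is $\oint_{\partial\Omega} W^{(a,b,c,d)}H_{n,k}^{(a,b,c,d)}H_{m,j}^{(a-1,b-1,c-1,d-1)}\,n_x\,\mathrm{d}s$. It vanishes because $a,b,c,d>0$ and $W^{(a,b,c,d)}$ vanishes on every piece of $\partial\Omega$: on $x=\alpha,\beta$ and on $y=\gamma\rho,\delta\rho$. This gives
\[
\bar c_{m,j}^{\,x,(n,k)}=-\Big\langle {\partial \over \partial x}H_{m,j}^{(a-1,b-1,c-1,d-1)},\,H_{n,k}^{(a,b,c,d)}\Big\rangle_{W^{(a,b,c,d)}}=-c_{n,k}^{\,x,(m,j)},
\]
where the right-hand side is the coefficient from Proposition~\ref{Diff_matrix} with parameters $(a-1,b-1,c-1,d-1)$ and the roles of $(n,k)$ and $(m,j)$ swapped. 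Thus $W_x^{(a,b,c,d)}=-\big(D_x^{(a-1,b-1,c-1,d-1)}\big)^{\top}$. The explicit formulas (\ref{eq:partialxwcmk})--(\ref{eq:partialxwcmkp1}) then come from those in Proposition~\ref{Diff_matrix}.

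\textbf{Step 3: read off the index ranges.} Proposition~\ref{Diff_matrix}, applied to $H_{m,j}$, says $c_{n,k}^{\,x,(m,j)}$ can be nonzero only in the following three cases.
\begin{itemize}
\item If $k=j$ and $m-\deg\rho^2-1\le n\le m-1$, then $n+1\le m\le n+\deg\rho^2+1$.
\item If $k=j-2$ and $m-3\le n\le m+\deg\rho^2-3$, then $j=k+2$ and $n-\deg\rho^2+3\le m\le n+3$.
\item If $k=j-1$ and $m-d_0-2\le n\le m+d_0-2$, then $j=k+1$ and $n-d_0+2\le m\le n+d_0+2$. In case (ii) these coefficients are zero.
\end{itemize}
Since transposition swaps block-bandwidths and in-block bandwidths, $W_x^{(a,b,c,d)}$ has block-bandwidths $(\deg\rho^2+1,\deg\rho^2-3)$ and each block has bandwidths $(2,0)$.

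\textbf{Main obstacle.} I expect the main difficulty to be justifying the vanishing boundary term. The boundary $\partial\Omega$ may have corners, and in case (ii) $\rho$ may vanish at $\alpha$ or $\beta$. I would handle this by applying Green's theorem on the piecewise smooth boundary, noting that the integrand is continuous on $\overline\Omega$ and identically zero on $\partial\Omega$. Alternatively, I would write the integral as an iterated integral in $x$ after the substitution $t=y/\rho(x)$; then the boundary contributions at $t=\gamma,\delta$ and at $x=\alpha,\beta$ each carry a vanishing positive power of the weight.
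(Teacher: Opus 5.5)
Your proposal is correct and is essentially the paper's own proof: the paper also integrates by parts to get $\bar c_{m,j}^{\,x,(n,k)}=-\big\langle \partial_x H_{m,j}^{(a-1,b-1,c-1,d-1)},\,H_{n,k}^{(a,b,c,d)}\big\rangle_{W^{(a,b,c,d)}}$, and then says the coefficients and summation limits follow ``in a similar manner as for Proposition~\ref{Diff_matrix}''; your identity $W_x^{(a,b,c,d)}=-\big(D_x^{(a-1,b-1,c-1,d-1)}\big)^{\top}$ makes that last step explicit and immediate. If you actually transcribe the formulas this way, you will find that the printed (\ref{eq:partialxwcmk}) should have $\mathcal{T}^{(a,b,c+d+2k+1)}_{(a,b,c+d+2k+\eta)}$ in place of the raising matrix with parameters $(a-1,b-1,\cdot)$, and that the $(\rho\rho')$-term of $\bar c_{m,k+2}^{\,x,(n,k)}$ should carry a $+$ sign; these are typos in the paper, not gaps in your argument.
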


\subsection{Conversion matrices of generalised Koornwinder polynomials}\label{sec:Kconversion}
    To construct sparse operator matrices for the partial differential operators in the next section, we require the following conversion (raising and lowering) matrices for incrementing or decrementing  parameters of non-weighted or weighted generalised Koornwinder polynomials:
\begin{align}
	\mathbf{H}^{(a,b,c,d)}(x,y) &= \mathbf{H}^{(a+1,b+1,c,d)}(x,y) \: T_{(a,b,c,d)}^{(a+1,b+1,c,d)}, \label{eq:koornconv1} \\
	\mathbf{H}^{(a,b,c,d)}(x,y) &= \mathbf{H}^{(a,b,c+1,d+1)}(x,y) \: T_{(a,b,c,d)}^{(a,b,c+1,d+1)} ,\label{eq:koornconv2}\\
 	\mathbf{W}^{(a,b,c,d)}(x,y) &= \mathbf{W}^{(a-1,b-1,c,d)}(x,y) \: T_{W,(a,b,c,d)}^{(a-1,b-1,c,d)},\label{eq:wkoornconv1}\\
	\mathbf{W}^{(a,b,c,d)}(x,y) &= \mathbf{W}^{(a,b,c-1,d-1)}(x,y) \: T_{W,(a,b,c,d)}^{(a,b,c-1,d-1)}. \label{eq:wkoornconv2}
 \end{align}
By composing (\ref{eq:koornconv1}) and (\ref{eq:koornconv2}), we obtain $T_{(a,b,c,d)}^{(a+1,b+1,c+1,d+1)}$, which maps $\mathbf{H}^{(a,b,c,d)}$ to $\mathbf{H}^{(a+1,b+1,c+1,d+1)}$,
\begin{equation}
 T_{(a,b,c,d)}^{(a+1,b+1,c+1,d+1)} = T_{(a+1,b+1,c,d)}^{(a+1,b+1,c+1,d+1)}T_{(a,b,c,d)}^{(a+1,b+1,c,d)},  \label{eq:Tinc1}
\end{equation}
and similarly we obtain $T_{W,(a,b,c,d)}^{(a-1,b-1,c-1,d-1)}$ by composing (\ref{eq:wkoornconv1}) and (\ref{eq:wkoornconv2}),
\begin{equation}
T_{W,(a,b,c,d)}^{(a-1,b-1,c-1,d-1)} =    T_{W,(a-1,b-1,c,d)}^{(a-1,b-1,c-1,d-1)}T_{W,(a,b,c,d)}^{(a-1,b-1,c,d)}. \label{eq:TWdec1}
\end{equation}
  \begin{proposition} \label{structure_conversion_mat}
	The conversion matrices are sparse, with banded-block-banded structure. More specifically:
	\begin{itemize}
		\item $T_{(a,b,c,d)}^{(a+1,b+1,c,d)}$ has block-bandwidths $(0,2)$ and each block has bandwidths $(0,0)$.
		\item $T_{(a,b,c,d)}^{(a,b,c+1,d+1)}$ has block-bandwidths $(\deg \rho^2 - 2,\deg \rho^2)$ and each block has bandwidths $(0,2)$.
 		\item $T_{W,(a,b,c,d)}^{(a-1,b-1,c,d)}$ has block-bandwidths $(2,0)$ and each block has bandwidths $(0,0)$.
		\item $T_{W,(a,b,c,d)}^{(a,b,c-1,d-1)}$ has block-bandwidths $(\deg\rho^2, \deg\rho^2 - 2)$ and each block has bandwidths $(2,0)$.
 	\end{itemize}
\end{proposition}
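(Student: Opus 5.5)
The plan is to compute every entry of each conversion matrix as an inner product, using orthonormality. The integral then factorises into an $x$-integral and a $t=y/\rho(x)$-integral, exactly as in the proofs of Theorem~\ref{thm:genKoornwinder} and Proposition~\ref{M_y}. Each factor is then recognised as an entry of a univariate operator matrix whose bandwidths are already known.

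\textbf{The non-weighted raising in $(a,b)$.} Since $\mathbf{H}^{(a+1,b+1,c,d)}$ is orthonormal, the entry of $T_{(a,b,c,d)}^{(a+1,b+1,c,d)}$ in row $(m,j)$ and column $(n,k)$ is $\langle H_{n,k}^{(a,b,c,d)},H_{m,j}^{(a+1,b+1,c,d)}\rangle_{W^{(a+1,b+1,c,d)}}$. Under the change of variable $t=y/\rho$, the $t$-factor is $\langle \widetilde P_k^{(d,c)},\widetilde P_j^{(d,c)}\rangle_{w_P^{(d,c)}}=\delta_{k,j}$, which already makes each block diagonal. With $j=k$, the $x$-factor is $\langle R^{(a,b,c+d+2k+1)}_{n-k},R^{(a+1,b+1,c+d+2k+1)}_{m-k}\rangle_{w_R^{(a+1,b+1,c+d+2k+1)}}$. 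This is an entry of the raising matrix (\ref{SemiclassicalRaisingfomula}) with $(\lambda_1,\lambda_2,\lambda_3,\lambda_4)=(1,1,0,0)$, whose bandwidths are $(0,2)$. Hence the entry can be nonzero only for $n-2\le m\le n$, which gives block-bandwidths $(0,2)$.

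\textbf{The non-weighted raising in $(c,d)$.} Here the target weight is $W^{(a,b,c+1,d+1)}=W^{(a,b,c,d)}\rho^2(t-\gamma)(\delta-t)$.
\begin{itemize}
\item The $t$-factor is $\langle \widetilde P_k^{(d,c)},\widetilde P_j^{(d+1,c+1)}\rangle_{w_P^{(d+1,c+1)}}=T(w_P^{(d,c)})_{[j,k]}$. By (\ref{ClassicalRaising}) it is nonzero only for $k-2\le j\le k$, which gives the block bandwidths $(0,2)$.
\item Collecting powers of $\rho$, the $x$-factor is $\int R^{(a,b,c+d+2k+1)}_{n-k}R^{(a,b,c+d+2j+3)}_{m-j}\,\rho^{k+j+1}\,w_R^{(a,b,c+d+1)}\,\mathrm{d}x$. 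I would treat the three admissible values of $j$ separately:
\begin{itemize}
\item For $j=k$, the factor is an entry of $\mathcal{T}^{(a,b,c+d+2k+3)}_{(a,b,c+d+2k+1)}$, which has bandwidths $(0,\deg\rho^2)$. This forces $n-\deg\rho^2\le m\le n$.
\item For $j=k-1$, both polynomials are orthonormal with respect to the same weight $w_R^{(a,b,c+d+2k+1)}$. The factor is therefore $\delta_{m,n-1}$. In case (ii) this term is absent anyway, because $c=d$ and $T(w_P^{(c,c)})$ has zero first superdiagonal by parity.
\item For $j=k-2$, the factor is an entry of $\mathcal{T}^{(a,b,c+d+2k+1)}_{(a,b,c+d+2k-1)}$, now transposed. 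This gives $n-2\le m\le n-2+\deg\rho^2$.
\end{itemize}
\end{itemize}
Taking the union over the three cases gives $-\deg\rho^2\le m-n\le \deg\rho^2-2$, i.e.\ block-bandwidths $(\deg\rho^2-2,\deg\rho^2)$.

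\textbf{The weighted matrices.} As in the proof for $W_y$, the family $\{W^{(a,b,c,d)}H_{n,k}^{(a,b,c,d)}\}$ is orthonormal with respect to $W^{(-a,-b,-c,-d)}$. So the entry of $T_{W,(a,b,c,d)}^{(a-1,b-1,c,d)}$ in row $(m,j)$ and column $(n,k)$ is
\[
\langle W^{(a,b,c,d)}H_{n,k}^{(a,b,c,d)},W^{(a-1,b-1,c,d)}H_{m,j}^{(a-1,b-1,c,d)}\rangle_{W^{(1-a,1-b,-c,-d)}}=\langle H_{n,k}^{(a,b,c,d)},H_{m,j}^{(a-1,b-1,c,d)}\rangle_{W^{(a,b,c,d)}}.
\]
This is precisely the $((n,k),(m,j))$ entry of $T_{(a-1,b-1,c,d)}^{(a,b,c,d)}$. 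Hence $T_{W,(a,b,c,d)}^{(a-1,b-1,c,d)}=\big(T_{(a-1,b-1,c,d)}^{(a,b,c,d)}\big)^{\top}$, and likewise $T_{W,(a,b,c,d)}^{(a,b,c-1,d-1)}=\big(T_{(a,b,c-1,d-1)}^{(a,b,c,d)}\big)^{\top}$. Transposition swaps both the block-bandwidths and the within-block bandwidths, which yields the last two bullets.

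\textbf{Expected obstacle.} The main care point is the bookkeeping in the second bullet. One has to track the exponent of $\rho$ correctly so that each $x$-integral matches the right semiclassical raising matrix, including which index is the row index and hence which side of the band applies. One also has to confirm that the within-block structure, which is governed by $j$ relative to $k$, decouples from the block structure, which is governed by $m$ relative to $n$.
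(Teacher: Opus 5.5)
Your overall strategy is the paper's: you write the coefficients as inner products, factorise via $t=y/\rho$, and read them off as entries of univariate raising and multiplication matrices. Your transpose identity $T_{W,(a,b,c,d)}^{(a-1,b-1,c,d)}=\big(T_{(a-1,b-1,c,d)}^{(a,b,c,d)}\big)^{\top}$ is correct. It is a tidier way to get the last two bullets than the paper's separate derivation of (\ref{eq:wkoornexp})--(\ref{eq:wkoornconvexp}), and it agrees with the paper's coefficients, e.g.\ (\ref{eq:wexpcoeffs}).

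However, your $j=k-1$ case in the second bullet is wrong. Since $W^{(a,b,c+1,d+1)}=w_R^{(a,b,c+d+2)}(x)\,w_P^{(d+1,c+1)}(t)$ and $\mathrm{d}y=\rho\,\mathrm{d}t$, the $x$-factor carries the weight $\rho^{k+j+1}w_R^{(a,b,c+d+2)}=w_R^{(a,b,c+d+k+j+3)}$. That is one power of $\rho$ more than the $\rho^{k+j+1}w_R^{(a,b,c+d+1)}$ you wrote. Your $j=k$ and $j=k-2$ conclusions are right only with the corrected weight; with yours they would not be entries of the raising matrices you cite. For $j=k-1$, however, you used your written weight. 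With the correct weight $w_R^{(a,b,c+d+2k+2)}=\rho\,w_R^{(a,b,c+d+2k+1)}$, the factor is
\[
\big\langle R^{(a,b,c+d+2k+1)}_{n-k},\,\rho\,R^{(a,b,c+d+2k+1)}_{m-k+1}\big\rangle_{w_R^{(a,b,c+d+2k+1)}}=\Big(\rho\big(J(w_R^{(a,b,c+d+2k+1)})\big)\Big)_{[n-k,\,m-k+1]},
\]
not $\delta_{m,n-1}$. Multiplied by the $t$-factor $-t^{(d+1,c+1)}_{k-1}$, this is exactly the paper's $r^{(n,k)}_{m,k-1}$ in (\ref{eq:expcoeffsr2}).

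In case (i), $\rho\big(J(w_R^{(a,b,c+d+2k+1)})\big)$ has bandwidths $(d_0,d_0)$, so these coefficients can be nonzero for $n-1-d_0\le m\le n-1+d_0$. Your union step therefore needs one more check. This interval lies inside $[n-\deg\rho^2,\,n+\deg\rho^2-2]=[n-2d_0,\,n+2d_0-2]$ precisely when $d_0\ge 1$, which holds (indeed $d_0\ge 2$ under the paper's standing assumption). With that correction, the block-bandwidths $(\deg\rho^2-2,\deg\rho^2)$ and within-block bandwidths $(0,2)$ follow as you say. Your parity remark correctly removes the $j=k-1$ term in case (ii), and the fourth bullet then follows from your transpose argument unchanged.
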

\noindent An outline of the proof of Proposition~\ref{structure_conversion_mat} is given in Appendix~\ref{AppendixC}.
 
\subsection{Sparse representations of differential operators}\label{sec:Laplace}

By composing the above multiplication, partial differentiation and conversion matrices associated with generalised Koornwinder polynomials, one can construct sparse matrix representations for linear differential operators.  

In our numerical experiments, we shall use the following sparse matrix representations of the Laplacian ($\Delta$) and biharmonic $(\Delta^2)$ operators, denoted by, respectively, $\Delta_{W,(1,1,1,1)}^{(1,1,1,1)}$ and ${{}_2}\Delta_{W,(2,2,2,2)}^{(2,2,2,2)}$:
\begin{equation*}
\Delta \mathbf{W}^{(1,1,1,1)} = \mathbf{H}^{(1,1,1,1)}\Delta_{W,(1,1,1,1)}^{(1,1,1,1)}, \qquad \Delta^2\, \mathbf{W}^{(2,2,2,2)} = \mathbf{H}^{(2,2,2,2)}\, {{}_2}\Delta_{W,(2,2,2,2)}^{(2,2,2,2)}.
\end{equation*}
We let $\Delta$ and $\Delta^2$ act on the weighted generalised Koornwinder polynomials $\mathbf{W}^{(1,1,1,1)}$ and $\mathbf{W}^{(2,2,2,2)}$, respectively, since these basis functions ensure the imposition of zero Dirichlet and Neumann (in the case of the biharmonic equation) boundary conditions on $\partial\Omega$, which for simplicity are the only types of boundary conditions we shall impose in our numerical experiments.  The following proposition is an immediate consequence of the results in sections~\ref{sec:Kdiffmats} and~\ref{sec:Kconversion}:
\begin{proposition}
The matrix Laplacian 
\begin{align} \label{Laplacian_Operator}
\Delta_{W,(1,1,1,1)}^{(1,1,1,1)} = D_x^{(0,0,0,0)}\,W_x^{(1,1,1,1)} 
+ T_{(0,0,1,1)}^{(1,1,1,1)} \,D_y^{(0,0,0,0)}\,T_{W,(1,1,0,0)}^{(0,0,0,0)}\,W_y^{(1,1,1,1)},
\end{align}
has block-bandwidths $(2\deg\rho^2 - 2, 2\deg\rho^2 - 2)$ and each block has bandwidths $(2,2)$.  The sparse matrix representation of the biharmonic operator is
\begin{align} \label{Biharmonic_Operator}
_2\Delta_{W, (2,2,2,2)}^{(2,2,2,2)}= \Delta_{(0,0,0,0)}^{(2,2,2,2)} \, \Delta_{W, (2,2,2,2)}^{(0,0,0,0)},
\end{align}
where
\begin{eqnarray*}
\Delta_{W, (2,2,2,2)}^{(0,0,0,0)}&=& W_x^{(1,1,1,1)}\, W_x^{(2,2,2,2)} + T_{W, (1,1,0,0)}^{(0,0,0,0)} \, W_y^{(1,1,1,1)} \, T_{W, (2,2,1,1)}^{(1,1,1,1)} \, W_y^{(2,2,2,2)}, \\
   \Delta_{(0,0,0,0)}^{(2,2,2,2)}&=& D_x^{(1,1,1,1)}\, D_x^{(0,0,0,0)} + T_{(1,1,2,2)}^{(2,2,2,2)}\, D_y^{(1,1,1,1)}\, T_{(0,0,1,1)}^{(1,1,1,1)}\, D_y^{(0,0,0,0)},
\end{eqnarray*}
 and $_2\Delta_{W, (2,2,2,2)}^{(2,2,2,2)}$ has block-bandwidths $(4\deg\rho^2 - 4, 4\deg\rho^2 - 4)$ with each block having bandwidths $(4,4)$.
\end{proposition}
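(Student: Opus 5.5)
The formulas (\ref{Laplacian_Operator}) and (\ref{Biharmonic_Operator}) will be verified by applying the definitions (\ref{eq:PartialXdef})--(\ref{eq:Partialdef}) and (\ref{eq:koornconv1})--(\ref{eq:wkoornconv2}) in sequence. The sparsity claims then follow from the block-bandwidth rule: if block matrices have block-bandwidths $(\sigma_1,\mu_1)$ and $(\sigma_2,\mu_2)$, their product has block-bandwidths at most $(\sigma_1+\sigma_2,\mu_1+\mu_2)$. The same rule applies within blocks, provided block structure is respected.

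\textbf{The Laplacian.} For the $x$-part, $\partial_x \mathbf{W}^{(1,1,1,1)} = \mathbf{W}^{(0,0,0,0)} W_x^{(1,1,1,1)}$. Since $W^{(0,0,0,0)}=1$, we have $\mathbf{W}^{(0,0,0,0)}=\mathbf{H}^{(0,0,0,0)}$, and differentiating again gives $\partial_x^2\mathbf{W}^{(1,1,1,1)}=\mathbf{H}^{(1,1,1,1)}D_x^{(0,0,0,0)}W_x^{(1,1,1,1)}$.

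For the $y$-part, $\partial_y\mathbf{W}^{(1,1,1,1)}=\mathbf{W}^{(1,1,0,0)}W_y^{(1,1,1,1)}$. We then lower by (\ref{eq:wkoornconv1}) to $\mathbf{W}^{(0,0,0,0)}=\mathbf{H}^{(0,0,0,0)}$, differentiate in $y$ to reach $\mathbf{H}^{(0,0,1,1)}$, and raise with $T_{(0,0,1,1)}^{(1,1,1,1)}$ from (\ref{eq:koornconv1}).

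Adding the two parts gives (\ref{Laplacian_Operator}). The biharmonic identity is obtained the same way: first apply $\Delta$ to $\mathbf{W}^{(2,2,2,2)}$, landing in $\mathbf{W}^{(0,0,0,0)}=\mathbf{H}^{(0,0,0,0)}$; then apply $\Delta$ again, landing in $\mathbf{H}^{(2,2,2,2)}$. The conversions needed are (\ref{eq:koornconv1}), (\ref{eq:koornconv2}) and (\ref{eq:wkoornconv1})--(\ref{eq:wkoornconv2}), along with the weighted/unweighted derivative relations.

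\textbf{Bandwidths of the Laplacian.} Write $r=\deg\rho^2$.

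\emph{First term.} From Propositions~\ref{Diff_matrix} and \ref{weighted_Diff_matrix}, $D_x$ has block-bandwidths $(r-3,r+1)$ and $W_x$ has $(r+1,r-3)$. Their product $D_xW_x$ therefore has block-bandwidths $(2r-2,2r-2)$. Within blocks the bandwidths are $(0,2)+(2,0)=(2,2)$.

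\emph{Second term.} The factors have the following bandwidths:
\begin{itemize}
\item $T_{(0,0,1,1)}^{(1,1,1,1)}$: block $(0,2)$, inner $(0,0)$;
\item $D_y$: block $(-1,1)$, inner $(-1,1)$;
\item $T_{W,(1,1,0,0)}^{(0,0,0,0)}$: block $(2,0)$, inner $(0,0)$;
\item $W_y$: block $(1,-1)$, inner $(1,-1)$.
\end{itemize}
Summing, the product has block-bandwidths $(2,2)$ and inner bandwidths $(0,0)$. Both are dominated by the first term when $r\ge 2$, which holds under the standing assumption $d_\eta\ge 2+\eta$.

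The biharmonic claims follow identically: each Laplacian-type factor has block $(2r-2,2r-2)$ and inner $(2,2)$, so their product has $(4r-4,4r-4)$ and $(4,4)$.

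\textbf{Main obstacle.} The block-bandwidth addition rule is immediate. The within-block bandwidth rule is subtler, because the blocks are rectangular, of size $(n+1)\times(m+1)$, and a product of block matrices sums many block products $A_{n,p}B_{p,m}$ with different block sizes. I would handle this by using the indexing convention in which a block entry is indexed by $k$ (the second Koornwinder index) directly. Bandwidth within a block then means a constraint on $j-k$. That constraint composes additively regardless of the block sizes, because every operator above maps $H_{n,k}$ to combinations of $H_{m,j}$ whose allowed values of $j-k$ are given explicitly in the propositions. Under this convention the inner-bandwidth sums are justified directly.
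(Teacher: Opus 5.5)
Your derivation of both operator identities and your bandwidth count for the Laplacian are correct, and they follow the paper's route: the paper calls the proposition an immediate consequence of the bandwidths in Sections~\ref{sec:Kdiffmats} and~\ref{sec:Kconversion}. Your remark that inner bandwidths add because every block is indexed by the absolute index $k$ is also sound.

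The gap is the biharmonic step. You claim that each factor $\Delta_{(0,0,0,0)}^{(2,2,2,2)}$ and $\Delta_{W,(2,2,2,2)}^{(0,0,0,0)}$ has block-bandwidths $(2r-2,2r-2)$ and inner bandwidths $(2,2)$. That is false. These factors are not of the mixed form $D_xW_x$ that made the Laplacian symmetric; each contains two derivatives of the same kind. Your own addition rule gives the following:
\begin{itemize}
\item $W_x^{(1,1,1,1)}W_x^{(2,2,2,2)}$ has block-bandwidths $(2r+2,2r-6)$ and inner bandwidths $(4,0)$.
\item The chain $T_{W,(1,1,0,0)}^{(0,0,0,0)}W_y^{(1,1,1,1)}T_{W,(2,2,1,1)}^{(1,1,1,1)}W_y^{(2,2,2,2)}$ has block-bandwidths $(6,-2)$ and inner $(2,-2)$.
\item Hence $\Delta_{W,(2,2,2,2)}^{(0,0,0,0)}$ has $(2r+2,2r-6)$ and $(4,0)$.
\item Symmetrically, $D_x^{(1,1,1,1)}D_x^{(0,0,0,0)}$ gives $(2r-6,2r+2)$ and $(0,4)$, and the $y$-chain gives $(-2,6)$ and $(-2,2)$.
\item Hence $\Delta_{(0,0,0,0)}^{(2,2,2,2)}$ has $(2r-6,2r+2)$ and $(0,4)$.
\end{itemize}
These are not mere overestimates. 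The outermost lower block diagonal of $W_x^{(1,1,1,1)}W_x^{(2,2,2,2)}$ is reached only along the path $(n,k)\to(n+r+1,k)\to(n+2r+2,k)$. Its entry is therefore a product of two outermost $k\to k$ coefficients from Proposition~\ref{weighted_Diff_matrix}, which do not vanish in general. The $y$-chain cannot cancel it, because $6<2r+2$ under the standing assumption.

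The stated result still holds, because the corrected asymmetric bandwidths sum to the same totals: $\bigl((2r-6)+(2r+2),\,(2r+2)+(2r-6)\bigr)=(4r-4,4r-4)$ and $(0+4,\,4+0)=(4,4)$. The biharmonic uses the same multiset of $D_x$ and $W_x$ factors as two copies of $D_xW_x$, only distributed differently between the two factors. Equivalently, expand the product into its four chains:
\begin{itemize}
\item $D_xD_xW_xW_x$ gives $(4r-4,4r-4)$ with inner $(4,4)$;
\item the two mixed chains each give $(2r,2r)$ with inner $(2,2)$;
\item the pure $y$-chain gives $(4,4)$ with inner $(0,0)$.
\end{itemize}
All are dominated by the first when $r\ge 2$. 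Replace your ``follows identically'' sentence with this computation and the argument is complete.
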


We shall also need a sparse matrix representation, denoted by $L_W$, of the following variable-coefficient Helmholtz operator 
\begin{equation*}
\mathcal{L} \mathbf{W}^{(1,1,1,1)} = \mathbf{H}^{(1,1,1,1)} L_W, \qquad \mathcal{L} = \Delta  + k^2 v(x,y), \quad k \in \mathbb{R},
\end{equation*}
where
\begin{equation}
L_W  =	\Delta_{W,(1,1,1,1)}^{(1,1,1,1)} + k^2 \, T_{(0,0,0,0)}^{(1,1,1,1)} \, v(J_x^{(0,0,0,0)}, J_y^{(0,0,0,0)}) \, T_{W,(1,1,1,1)}^{(0,0,0,0)},  \label{eq:varHelmsyst}
\end{equation}
and \( v(J_x^{(0,0,0,0)}, J_y^{(0,0,0,0)}) \) is computed via the bivariate Clenshaw algorithm with matrix inputs~\cite{TriangleOPs}.  In the numerical experiment in Section~\ref{Helmholtz_Example}, $v(x,y) =  1 - \left[3(x-0.2)^2+2(y-0.4)^2\right]$; using Propositions~\ref{M_y} and~\ref{structure_conversion_mat} and (\ref{eq:Tinc1})--(\ref{eq:TWdec1}) to add up the matrix bandwidths, we find that (\ref{eq:varHelmsyst}) has  block-bandwidths $(4\deg\rho^2-2, 4\deg\rho^2-2)$ and each block has bandwidths $(4,4)$.

Figure~\ref{fig:Laplacian_Biharmonic_sparsity} shows examples of the sparse matrix representations of differential operators derived in this section. While many entries are non-zero in these particular truncations, the number of non-zero entries only grows linearly with the truncation size ($\mathcal{O}(N^2)$ non-zero entries for an $\mathcal{O}(N^2) \times \mathcal{O}(N^2)$ truncation) and hence as $N$ becomes large these will be sparse matrices.

\begin{figure}[htbp]
  \centering
   \begin{minipage}[t]{0.31\textwidth}
    \centering
    \makebox[\linewidth][c]{\parbox[c][1.8em][c]{\linewidth}{\centering\scriptsize Laplacian}}
    \vspace{0.3ex}
    \includegraphics[width=\linewidth]{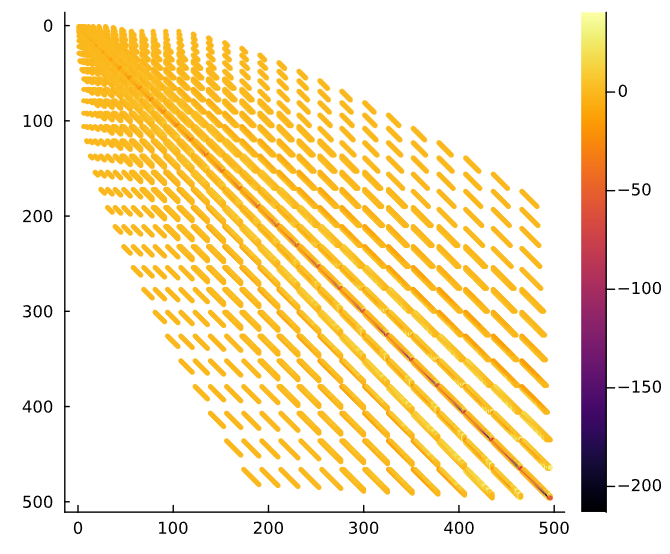}
  \end{minipage}
 \begin{minipage}[t]{0.33\textwidth}
    \centering
    \makebox[\linewidth][c]{\parbox[c][1.8em][c]{\linewidth}{\centering\scriptsize Variable-coefficient Helmholtz}}
    \vspace{0.3ex}
    \includegraphics[width=\linewidth]{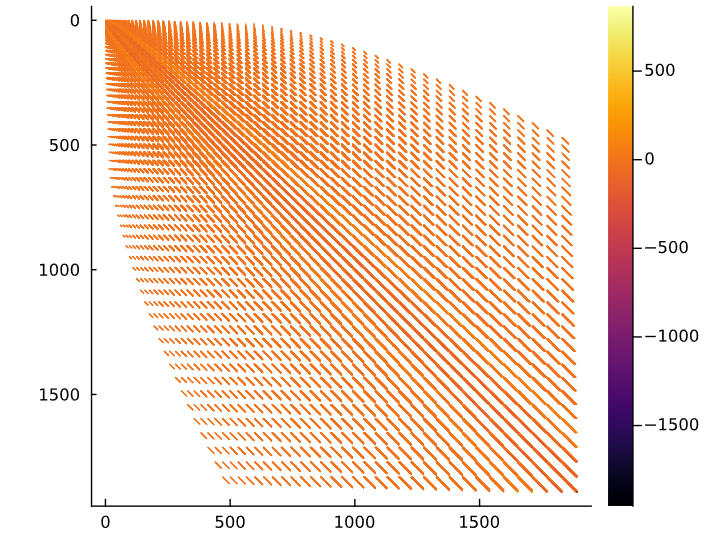}
\end{minipage}
 \begin{minipage}[t]{0.33\textwidth}
    \centering
    \makebox[\linewidth][c]{\parbox[c][1.8em][c]{\linewidth}{\centering\scriptsize Biharmonic}}
    \vspace{0.3ex}
    \includegraphics[width=\linewidth]{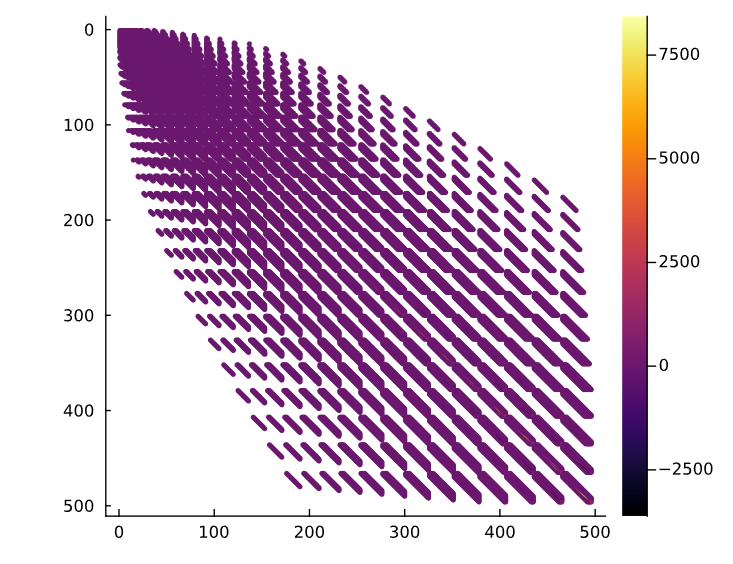}
  \end{minipage}
  \caption{Sparsity pattern of the differential operator matrices.  
   Left: the Laplace operator matrix~(\ref{Laplacian_Operator}) corresponding to the domain described in Section~\ref{Poisson_Example}; it has block-bandwidths $(2\deg\rho^2-2, 2\deg\rho^2-2)=(12,12)$ and each block has bandwidths $(2,2)$. 
  Centre: the variable-coefficient Helmholtz operator matrix~(\ref{eq:varHelmsyst}) corresponding to the domain described in Section~\ref{Helmholtz_Example}; it has block-bandwidths $(4\deg\rho^2-2, 4\deg\rho^2-2)=(30,30)$ and each block has bandwidths $(4,4)$.
  Right: the biharmonic  operator matrix~(\ref{Biharmonic_Operator}) corresponding to the domain described in Section~\ref{Biharmonic_Example}; it has block-bandwidths $(4\deg\rho^2-4, 4\deg\rho^2-4)=(12,12)$ and each block has bandwidths $(4,4)$. }
  \label{fig:Laplacian_Biharmonic_sparsity}
\end{figure}

  	We compute the  multiplication, partial differentiation and conversion matrices for generalised Koornwinder polynomials with optimal (linear) complexity in the number of Koornwinder basis functions. 
	Specifically, the $\frac{(N+1)(N+2)}{2} \times \frac{(N+1)(N+2)}{2}$ principal finite sections of these operator matrices associated with the Koornwinder basis functions $\left\lbrace H_{n,k} : 0 \leq n \leq N,  0 \leq k \leq n \right\rbrace$  are computed in $\mathcal{O}((\deg\rho^2) N^2 )$ complexity and the Laplace, biharmonic and variable-coefficient Helmholtz operator matrices discussed in this section are computed in $\mathcal{O}\left((\deg\rho^2)^2 N^2 \right)$ complexity.

\section{Fast transforms for generalised Koornwinder polynomials}\label{sec:fasttransforms}

 To implement the spectral method in the next section,  we need to approximate the expansion coefficients of a given function $f(x,y)$ in the generalised Koornwinder basis, given by
\begin{equation}
\mathbf{f_H} := \iint_{\Omega}  {\mathbf{H}^{(a,b,c,d)}}^\top f(x,y)  W^{(a,b,c,d)} \mathrm{d}y \,\mathrm{d}x := \Big(f_{0,0} \;\Big|\; f_{1,0} \:\: f_{1,1} \;\Big|\; \cdots \;\Big|\; f_{n,0}\: \cdots\: f_{n,n} \;\Big|\; \cdots \Big)^\top.  \label{eq:fHdef}
\end{equation}
One approach is to approximate these  coefficients via quadrature by extending the method in \cite{SnowballOlver} to generalised Koornwinder polynomials.  However, to compute via quadrature the coefficients in the first $N$ blocks of (\ref{eq:fHdef}), which has $\mathcal{O}(N^2)$ coefficients, has $\mathcal{O}(N^4)$ complexity.  Instead, here we show that by expressing the generalised Koornwinder polynomials in a tensor product Chebyshev basis, we can use fast transforms to map function values on a tensor product grid to approximate expansion coefficients and vice versa in $\mathcal{O}(N^3)$ complexity\footnote{The computational complexity of the map from function values to approximate expansion coefficients and the inverse map can be further reduced to quasi-optimal $\mathcal{O}(N^2 \log N)$ complexity, i.e., almost linear complexity in the number of basis functions,
by performing Givens rotations and using the Butterfly algorithm~\cite{spherical_harmonic}.}.

\subsection{Maps between generalised Koornwinder and tensor product Chebyshev expansion coefficients}

We define the tensor product Chebyshev basis as follows, with $t = y/\rho$,
       \begin{equation*}
\mathbf{T}(x) \otimes \mathbf{T}(t) = \Big( T_0(x) \mathbf{T}(t) \: \Big\vert\: T_1(x) \mathbf{T}(t) \: \Big\vert \: \cdots \Big)
\end{equation*}
where
 $\mathbf{T}(x)$ and $\mathbf{T}(t)$ 
are quasi-matrices of shifted Chebyshev polynomials on the intervals $[\alpha, \beta]$ and $[\gamma, \delta]$, respectively, and $\otimes$ denotes the Kronecker product. 
We have the following relations:
\begin{align}
	\mathbf{S}^{(a,b)} = \mathbf{T}(x)  R_{T,x}, \qquad \mathbf{\widetilde{P}}^{(d,c)} = \mathbf{T}(t) R_{T,t},
\end{align}
where $R_{T,x}$ and $R_{T,t}$ are the upper-triangular conversion matrices as computed via the fast Chebyshev--Jacobi transform \cite{Jac_Cheb}.  

We shall require the following operator matrix.
 \begin{lemma}
The operator matrix $Q_{\rho}^{(a,b,c+d+2k+1)}$ defined by
\begin{align*}
	&\rho(x)^k \, \mathbf{R}^{(a,b,c+d+2k+1)} = \mathbf{S}^{(a,b)} \, Q_{\rho}^{(a,b,c+d+2k+1)},   
 \end{align*}
can be expressed as
\begin{equation}
Q_{\rho}^{(a,b,c+d+2k+1)} = \begin{cases}
\left(\mathcal{C}^{(a,b,c+d+1)}\right)^{-1} \, Q_0 \cdots Q_{k-4} Q_{k-2}, & k \text{ even},\\
{\mathcal{C}^{(a,b,1)}}^\top \mathcal{C}^{(a,b,1)} \left(\mathcal{C}^{(a,b,c+d+3)}\right)^{-1} \, Q_1 \cdots Q_{k-4} Q_{k-2},  & k \text{ odd},
\end{cases}  \label{eq:Qrho}
\end{equation}
where $Q_{\ell}$ denotes the orthogonal matrix obtained from the QR factorisation of the operator matrix $\rho^2\left(   J(w_{R}^{(a,b,c+d+2\ell+1)})\right)$.
\end{lemma}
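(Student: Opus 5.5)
The plan is to reduce the formula to a one-step identity that lowers $k$ by two,
\begin{equation}
\rho^2\,\mathbf{R}^{(a,b,c+d+2k+1)} = \mathbf{R}^{(a,b,c+d+2k-3)}\,Q_{k-2}, \qquad k\geq 2, \label{eq:planstep}
\end{equation}
and then iterate it down to $k=0$ or $k=1$. To prove (\ref{eq:planstep}), I will first record the lowering relation dual to (\ref{eq:semiclassraise}). Expanding $\rho^2\mathbf{R}^{(a,b,c+d+2\ell+1)}$ in the orthonormal basis $\mathbf{R}^{(a,b,c+d+2\ell-1)}$ via (\ref{eq:linopmat}) gives coefficient matrix
\[
\langle \mathbf{R}^{(a,b,c+d+2\ell-1)},\rho^2\mathbf{R}^{(a,b,c+d+2\ell+1)}\rangle_{w_R^{(a,b,c+d+2\ell-1)}} = \langle \mathbf{R}^{(a,b,c+d+2\ell-1)},\mathbf{R}^{(a,b,c+d+2\ell+1)}\rangle_{w_R^{(a,b,c+d+2\ell+1)}} = {\mathcal{R}_\ell^{(a,b,c,d)}}^{\top}.
\]
Hence $\rho^2\mathbf{R}^{(a,b,c+d+2\ell+1)}=\mathbf{R}^{(a,b,c+d+2\ell-1)}{\mathcal{R}_\ell^{(a,b,c,d)}}^\top$. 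Applying this twice gives
\[
\rho^4\mathbf{R}^{(a,b,c+d+2k+1)}=\mathbf{R}^{(a,b,c+d+2k-3)}\,{\mathcal{R}_{k-1}}^{\top}{\mathcal{R}_{k}}^{\top},
\]
where I drop the superscripts $(a,b,c,d)$ on $\mathcal{R}_\ell$.

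Next I identify ${\mathcal{R}_{k-1}}^\top{\mathcal{R}_k}^\top$ with $MQ_{k-2}$, where
\[
M:=\rho^2\bigl(J(w_R^{(a,b,c+d+2k-3)})\bigr)=Q_{k-2}U
\]
is the QR factorisation and $U$ is upper triangular with positive diagonal. By (\ref{eq:raisechol}), $M={\mathcal{R}_{k-1}}^\top\mathcal{R}_{k-1}$. By the similarity relation (\ref{eq:semiclassjackg0}),
\[
\rho^2\bigl(J(w_R^{(a,b,c+d+2k-1)})\bigr)=\mathcal{R}_{k-1}M\mathcal{R}_{k-1}^{-1}=\mathcal{R}_{k-1}{\mathcal{R}_{k-1}}^\top,
\]
and this also equals ${\mathcal{R}_k}^\top\mathcal{R}_k$ by (\ref{eq:raisechol}). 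Therefore
\[
M^\top M=M^2={\mathcal{R}_{k-1}}^\top\bigl(\mathcal{R}_{k-1}{\mathcal{R}_{k-1}}^\top\bigr)\mathcal{R}_{k-1}=(\mathcal{R}_k\mathcal{R}_{k-1})^\top(\mathcal{R}_k\mathcal{R}_{k-1}).
\]
We also have $M^\top M=U^\top U$. By uniqueness of the Cholesky factor with positive diagonal, $\mathcal{R}_k\mathcal{R}_{k-1}=U$, so ${\mathcal{R}_{k-1}}^\top{\mathcal{R}_k}^\top=U^\top=MQ_{k-2}$, using $M=M^\top=U^\top Q_{k-2}^\top$. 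Substituting and using (\ref{eq:polymult}) in the form $\mathbf{R}^{(a,b,c+d+2k-3)}M=\rho^2\mathbf{R}^{(a,b,c+d+2k-3)}$ gives
\[
\rho^4\mathbf{R}^{(a,b,c+d+2k+1)}=\rho^2\,\mathbf{R}^{(a,b,c+d+2k-3)}Q_{k-2}.
\]
Dividing by $\rho^2>0$ on $(\alpha,\beta)$ yields (\ref{eq:planstep}).

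Multiplying (\ref{eq:planstep}) by $\rho^{k-2}$ gives $\rho^k\mathbf{R}^{(a,b,c+d+2k+1)}=\rho^{k-2}\mathbf{R}^{(a,b,c+d+2(k-2)+1)}Q_{k-2}$, and induction on $k$ in steps of two reduces everything to the base cases. For even $k$ the base case is $\mathbf{R}^{(a,b,c+d+1)}=\mathbf{S}^{(a,b)}(\mathcal{C}^{(a,b,c+d+1)})^{-1}$ from (\ref{ConnMatSR}). For odd $k$ the base case is
\[
\rho\,\mathbf{R}^{(a,b,c+d+3)}=\rho\,\mathbf{S}^{(a,b)}(\mathcal{C}^{(a,b,c+d+3)})^{-1}=\mathbf{S}^{(a,b)}\rho\bigl(J(w_S^{(a,b)})\bigr)(\mathcal{C}^{(a,b,c+d+3)})^{-1}.
\]
Here $\rho(J(w_S^{(a,b)}))={\mathcal{C}^{(a,b,1)}}^\top\mathcal{C}^{(a,b,1)}$ by \cite[Lemma 2.8]{Gutleb}, applied to the modification $\rho$ of $w_S^{(a,b)}$, exactly as in (\ref{eq:raisechol}). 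This step requires $\rho$ to be a polynomial, i.e.\ case (i); in case (ii) it holds only up to the machine-precision polynomial surrogate for $\rho$, which is the point I would flag.

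The main obstacle is the identification ${\mathcal{R}_{k-1}}^\top{\mathcal{R}_k}^\top=MQ_{k-2}$, which needs three things. First, the QR factor must be normalised with positive diagonal of $U$ so that Cholesky uniqueness applies. Second, the infinite-dimensional manipulations must be justified, namely associativity of the banded products and invertibility of the triangular factors. The cleanest way to handle this is to work with the banded matrices entrywise, where all sums are finite. Third, I must check that the index conventions match, namely that $Q_\ell$ comes from $\rho^2(J(w_R^{(a,b,c+d+2\ell+1)}))$, which for $\ell=k-2$ is the weight index $c+d+2k-3$ used above.
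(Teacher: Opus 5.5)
Your proof is correct. Its skeleton is the same as the paper's: reduce the lemma to $\rho^2\,\mathbf{R}^{(a,b,c+d+2k+1)}=\mathbf{R}^{(a,b,c+d+2k-3)}Q_{k-2}$ and iterate down to $k=0$ or $k=1$.

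\textbf{Where the routes differ.} The difference lies in how you justify that one-step identity.

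\emph{Paper's route.} The paper quotes \cite[Lemma 2.8]{Gutleb} in its QR form. Write $M=\rho^2\bigl(J(w_R^{(a,b,c+d+2k-3)})\bigr)=Q_{k-2}U$. That lemma says the triangular factor $U$ is the raising matrix $\mathcal{T}_{(a,b,c+d+2k-3)}^{(a,b,c+d+2k+1)}$ for the modification $\rho^4$. The identity is then one line: $\rho^2\mathbf{R}^{(a,b,c+d+2k+1)}=\rho^2\mathbf{R}^{(a,b,c+d+2k-3)}U^{-1}=\mathbf{R}^{(a,b,c+d+2k-3)}MU^{-1}=\mathbf{R}^{(a,b,c+d+2k-3)}Q_{k-2}$.

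\emph{Your route.} You derive this QR fact from the Cholesky characterisation (\ref{eq:raisechol}) alone. You use the lowering relations $\rho^2\mathbf{R}^{(a,b,c+d+2\ell+1)}=\mathbf{R}^{(a,b,c+d+2\ell-1)}{\mathcal{R}_\ell^{(a,b,c,d)}}^\top$, the similarity (\ref{eq:semiclassjackg0}), and uniqueness of positive-diagonal Cholesky factors. This gives $U=\mathcal{R}_k\mathcal{R}_{k-1}$, which is exactly the paper's $\mathcal{T}_{(a,b,c+d+2k-3)}^{(a,b,c+d+2k+1)}$, and you then divide out $\rho^2$.

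\emph{What each buys.} Your version is self-contained relative to what the paper states. It also makes explicit two things the paper uses tacitly: the positive-diagonal normalisation of the QR factor, and the finiteness of the sums in the infinite products. The paper's version is shorter.

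\emph{A shortcut for yours.} Since $\mathcal{R}_k\mathcal{R}_{k-1}$ is the raising matrix for the modification $\rho^4$, it is (Section~\ref{SemiclassicalRaising}) the Cholesky factor of $\rho^4\bigl(J(w_R^{(a,b,c+d+2k-3)})\bigr)=M^2=M^\top M=U^\top U$. So the similarity step is unnecessary. Writing $\mathbf{R}^{(a,b,c+d+2k+1)}=\mathbf{R}^{(a,b,c+d+2k-3)}U^{-1}$ also avoids dividing by $\rho^2$.

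\textbf{The case (ii) restriction is not needed.} For odd $k$ in case (ii), the paper does not use the polynomial-modification lemma to get $\rho\,\mathbf{S}^{(a,b)}=\mathbf{S}^{(a,b)}{\mathcal{C}^{(a,b,1)}}^\top\mathcal{C}^{(a,b,1)}$. It uses (\ref{eq:linopmat}) and (\ref{ConnMatSR}): $\langle\mathbf{S}^{(a,b)},\rho\,\mathbf{S}^{(a,b)}\rangle_{w_S^{(a,b)}}=\langle\mathbf{S}^{(a,b)},\mathbf{S}^{(a,b)}\rangle_{w_R^{(a,b,1)}}={\mathcal{C}^{(a,b,1)}}^\top\mathcal{C}^{(a,b,1)}$. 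This holds for any $\rho$ positive on $[\alpha,\beta]$. In case (ii) the matrix is merely no longer banded, and each column of $\rho\,\mathbf{S}^{(a,b)}$ is a convergent infinite expansion. Every entry of ${\mathcal{C}^{(a,b,1)}}^\top\mathcal{C}^{(a,b,1)}(\mathcal{C}^{(a,b,c+d+3)})^{-1}Q_1\cdots Q_{k-2}$ is still a finite sum, because the later factors all have finite lower bandwidth. So the formula holds exactly in case (ii), not just up to the polynomial surrogate for $\rho$.
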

\begin{proof}
 From~\cite[Lemma 2.8]{Gutleb}, it follows that the raising matrix $\mathcal{T}_{(a,b,c+d+2k - 3)}^{(a,b,c+d+2k+1)}$ is the upper-triangular factor of the QR factorisation of $\rho^2\left(   J(w_{R}^{(a,b,c+d+2k-3)})\right)$, i.e., 
\begin{equation*}
\rho^2\left( J(w_{R}^{(a,b,c+d+2k-3)}) \right) = Q_{k-2}\,\mathcal{T}_{(a,b,c+d+2k - 3)}^{(a,b,c+d+2k+1)}.
\end{equation*}
Hence, for $k$ even, 
\begin{eqnarray*}
\rho^k\, \mathbf{R}^{(a,b,c+d+2k+1)} &=& \rho^k\,\mathbf{R}^{(a,b,c+d+2k-3)}\left(\mathcal{T}_{(a,b,c+d+2k - 3)}^{(a,b,c+d+2k+1)}\right)^{-1} \\
& =& \rho^{k-2}\,\mathbf{R}^{(a,b,c+d+2k-3)}\rho^2\left( J(w_{R}^{(a,b,c+d+2k-3)}) \right)\left(\mathcal{T}_{(a,b,c+d+2k - 3)}^{(a,b,c+d+2k+1)}\right)^{-1} \\
&=& \rho^{k-2}\,\mathbf{R}^{(a,b,c+d+2k-3)}Q_{k-2} \\
&=& \rho^{k-4}\,\mathbf{R}^{(a,b,c+d+2k-7)}Q_{k-4} Q_{k-2} \\
&\vdots &  \\
&=& \mathbf{R}^{(a,b,c+d+1)}Q_{0} \cdots Q_{k-4} Q_{k-2} \\
&=& \mathbf{S}^{(a,b)} \left(\mathcal{C}^{(a,b,c+d+1)}\right)^{-1} Q_{0} \cdots Q_{k-4} Q_{k-2}.
\end{eqnarray*}
For $k$ odd, the expression for $Q_{\rho}^{(a,b,c+d+2k+1)}$ can be derived similarly and by also using the fact that $\rho\, \mathbf{S}^{(a,b)} =\mathbf{S}^{(a,b)}\, {\mathcal{C}^{(a,b,1)}}^\top \mathcal{C}^{(a,b,1)} $, which follows from (\ref{eq:linopmat}) and (\ref{ConnMatSR}). 
\end{proof}

The coefficients in $\mathbf{f_H}$ can be rearranged into the following matrix:
\[
  \begin{pmatrix}
f_{0,0} & f_{1,1} & f_{2,2} & \cdots & f_{k,k} & \cdots\\
f_{1,0} & f_{2,1} & f_{3,2} & \cdots &  f_{k+1,k} & \cdots\\
 \vdots  & \vdots  &  \vdots & \ddots & \vdots & \ddots \\
f_{k,0} & f_{k+1,1} & f_{k+2,2} & \cdots&  f_{2k,k} & \cdots\\
\vdots  & \vdots  & \vdots  & \ddots & \vdots & \ddots 
\end{pmatrix} = \Big( \mathbf{f}_0 \; \; \mathbf{f}_1 \;  \cdots  \; \mathbf{f}_k \; \;\cdots \Big).
\]

Since $\widetilde{P}_k(t) = \widetilde{\mathbf{P}}(t)\mathbf{e}_k = \mathbf{T}(t)R_{T,t}\,\mathbf{e}_k$, where $\mathbf{e}_k$ is the infinite column vector with $1$ as its $(k+1)$-st entry and zeros elsewhere,
 \begin{eqnarray*}
\widetilde{P}_k \,\mathbf{T}(x) &=&  \Big( \widetilde{P}_k(t) T_0(x) \: \Big\vert\: \widetilde{P}_k(t) T_1(x) \: \Big\vert \: \cdots \Big) \\
&=& \Big(  T_0(x)\mathbf{T}(t)R_{T,t}\,\mathbf{e}_k \: \Big\vert\:  T_1(x)\mathbf{T}(t)R_{T,t}\,\mathbf{e}_k \: \Big\vert \: \cdots \Big) \\
&=& \Big(  \mathbf{T}(x) \otimes \mathbf{T}(t) \Big)\Big(  I \otimes R_{T,t}\,\mathbf{e}_k  \Big).
  \end{eqnarray*}

 The following shows how the expansion coefficients of a function in the generalised Koornwinder  basis can be used to obtain the tensor product Chebyshev coefficients,
\begin{eqnarray}
f(x,y)&=& {\mathbf{H}^{(a,b,c,d)}} \mathbf{f_H}  = \sum_{k = 0}^{\infty}\sum_{n = k}^{\infty} f_{n,k} H_{n,k}^{(a,b,c,d)} = \sum_{k = 0}^{\infty}\sum_{n = k}^{\infty} f_{n,k}R_{n-k}^{(a,b,c+d+2k+1)}\rho^k\widetilde{P}_k \nonumber \\
& =& \sum_{k = 0}^{\infty}\sum_{m = 0}^{\infty} \widetilde{P}_k \rho^k\, f_{m+k,k}R_{m}^{(a,b,c+d+2k+1)} = \sum_{k = 0}^{\infty} \widetilde{P}_k \rho^k\,\mathbf{R}^{(a,b,c+d+2k+1)}\mathbf{f_k}\nonumber \\
&=& \sum_{k = 0}^{\infty} \widetilde{P}_k \,\mathbf{T}(x)R_{T,x} \, Q_{\rho}^{(a,b,c+d+2k+1)}\mathbf{f_k} \nonumber \\
&=& \Big(  \mathbf{T}(x) \otimes \mathbf{T}(t) \Big) \underbrace{\sum_{k = 0}^{\infty} \Big(  I \otimes R_{T,t}\,\mathbf{e}_k  \Big) R_{T,x}\,Q_{\rho}^{(a,b,c+d+2k+1)}\mathbf{f_k}}_{=\mathbf{f}_{\mathbf{T}^2}} \label{Kronecker1} \\
&=& \Big(  \mathbf{T}(x) \otimes \mathbf{T}(t) \Big)\,\mathbf{f}_{\mathbf{T}^2}. \nonumber
\end{eqnarray}
Conversely, given $\mathbf{f}_{\mathbf{T}^2}$, the expansion coefficients in the tensor product Chebyshev basis, the Koornwinder coefficients can be obtained via
             \begin{align}
	\mathbf{f}_k = { Q_{\rho}^{(a,b,c+d+2k+1)} }^{-1}   R_{T,x}^{-1} \Big( I \otimes {\mathbf{e}^\top_k} {R_{T,t}^{-1}} \Big) \mathbf{f}_{\mathbf{T}^2}, \qquad k = 0, 1, \ldots. \label{Kronecker2}
\end{align}

The finite-dimensional versions of (\ref{Kronecker1}) and (\ref{Kronecker2}) are realised as follows. We let the Koornwinder coefficients $f_{n,k}$, $0 \leq n \leq N, 0 \leq k \leq n$ be the entries of the column vectors $\mathbf{f}_k \in \mathbb{R}^{N+1}$, for $k = 0, \ldots, N$, where coefficients outside of the index set $\lbrace (n, k) : 0 \leq n \leq N, 0 \leq k \leq n  \rbrace$ are set equal to zero.  Then applying $Q_{\rho}^{(a,b,c+d+2k+1)}$, as defined in (\ref{eq:Qrho}), and its inverse to vectors of length $N+1$ (via Householder reflectors to multiply by the orthogonal matrices and LU factorisation to invert the connection matrices) can be accomplished in $\mathcal{O}((\deg\rho^2)^2 N^2)$ complexity in case(i) and $\mathcal{O}(((\deg\rho^2)^2 +m) N^2)$ complexity in case (ii).  Each summand in (\ref{Kronecker1}) takes the form
\begin{equation*}
\left(I \otimes \mathbf{v}\right) \mathbf{y} = \left(
\begin{array}{c}
y_0 \mathbf{v} \\
\hline 
y_1 \mathbf{v} \\
\hline 
\vdots \\
\hline
y_N \mathbf{v}
\end{array}
\right),
\end{equation*}
where $\mathbf{v, y} \in \mathbb{R}^{N+1}$ and $y_0, \ldots, y_N$ are the entries of $\mathbf{y}$ and thus $\left(I \otimes \mathbf{v}\right) \mathbf{y}$ and  $\mathbf{f}_{\mathbf{T}^2} \in \mathbb{R}^{(N+1)^2}$.  The finite-dimensional version of (\ref{Kronecker2}) requires computing products of the form $\left( I \otimes \mathbf{w}^{\top}\right) \mathbf{z} = \left( \mathbf{w}^{\top} \mathbf{z}_0, \: \mathbf{w}^{\top} \mathbf{z}_1,\: \cdots, \: \mathbf{w}^{\top} \mathbf{z}_N \right)^{\top}$, where $\mathbf{w} \in \mathbb{R}^{N+1}$ and $\mathbf{z}_0, \ldots, \mathbf{z}_N \in \mathbb{R}^{N+1}$ are the (block) entries of $\mathbf{z} \in \mathbb{R}^{(N+1)^2}$, hence $\left( I \otimes \mathbf{w}^{\top} \right) \mathbf{z}$ and  $\mathbf{f}_k \in \mathbb{R}^{N+1}$.  Hence, computing the finite-dimensional versions of (\ref{Kronecker1}) (with $N+1$ terms in the summation) and (\ref{Kronecker2}) (for $\mathbf{f}_k$, $k = 0, \ldots, N$) has $\mathcal{O}\left(N^3\right)$ complexity.

Recall from Section~\ref{sec:Laplace} that to impose zero Dirichlet and, in the case of the biharmonic equation, Neumann boundary conditions, the solution is expanded in weighted bases $\mathbf{W}^{(a,b,c,d)} = W^{(a,b,c,d)}\mathbf{H}^{(a,b,c,d)}$, with $W^{(a,b,c,d)}$ defined in (\ref{eq:Wdef}).  Similar to the derivation of (\ref{Kronecker1}), it can be shown that for a function $f$ with an expansion in the weighted basis $f(x,y) =  \mathbf{W}^{(a,b,c,d)} \mathbf{f_{H}}$, its tensor product Chebyshev coefficients are given by 
\begin{equation}
	\mathbf{f}_{\mathbf{T}^2} = \sum_{k=0}^{\infty}  \left[ I \otimes \left(R_{T,t}\, w_P^{(d,c)}\left(J(w_P^{(d,c)})\right) \mathbf{e}_k\right) \right] R_{T,x}\, w_R^{(a,b,c+d)}\left(J(w_S^{(a,b)}) \right) Q_{\rho}^{(a,b,c+d+2k+1)} \, \mathbf{f}_k,  \label{eq:kron3}
\end{equation}
where $w_P^{(d,c)}$ and $w_R^{(a,b,c+d)}$ are defined in (\ref{eq:wRwPdef}).  Similar to the case for (\ref{Kronecker1}), the finite-dimensional version of (\ref{eq:kron3}) for computing $(N+1)^2$ tensor product Chebyshev coefficients has $\mathcal{O}\left(N^3 \right)$ complexity.

\subsection{Maps between tensor product Chebyshev coefficients and function values on a tensor product grid}

The function values $f(x_m, t_n)$, $0 \leq m, n \leq N$  in the $(x, t)$-plane on the tensor product Chebyshev grid $(x_m, t_n)$, where
\begin{equation*}
x_m = \frac{\beta + \alpha}{2} + \frac{\beta - \alpha}{2} \cos\left(\frac{(2m+1)}{(N+1)}\frac{\pi}{2}  \right), \qquad
t_n = \frac{ \delta + \gamma }{2} + \frac{\delta - \gamma}{2} \cos\left(\frac{(2n+1)}{(N+1)}\frac{\pi}{2}  \right),
\end{equation*}
which correspond in the $(x,y)$-plane to the function values $f(x_m,t_n\,\rho(x_m))$, can be mapped to $(N+1)^2$  entries of the vector of approximate tensor product Chebyshev coefficients $\mathbf{f_{T^2}}$ in $\mathcal{O}\left(N^2\log N\right)$ complexity via the discrete cosine transform (DCT) and the inverse map can be computed using the inverse discrete cosine transform (iDCT).   
          Figure~\ref{fig:sample_grid} shows the mapped tensor product Chebyshev grids \((x_m, t_n\,\rho(x_m))\) in the $(x,y)$-plane  with \(N=50\) on the domains shown in Figure~\ref{fig:approx_solutions_Poisson_Helmholtz}.

\begin{figure}[h]
  \centering
  
   \begin{minipage}[b]{0.4\textwidth}
    \centering
    \includegraphics[width=\linewidth]{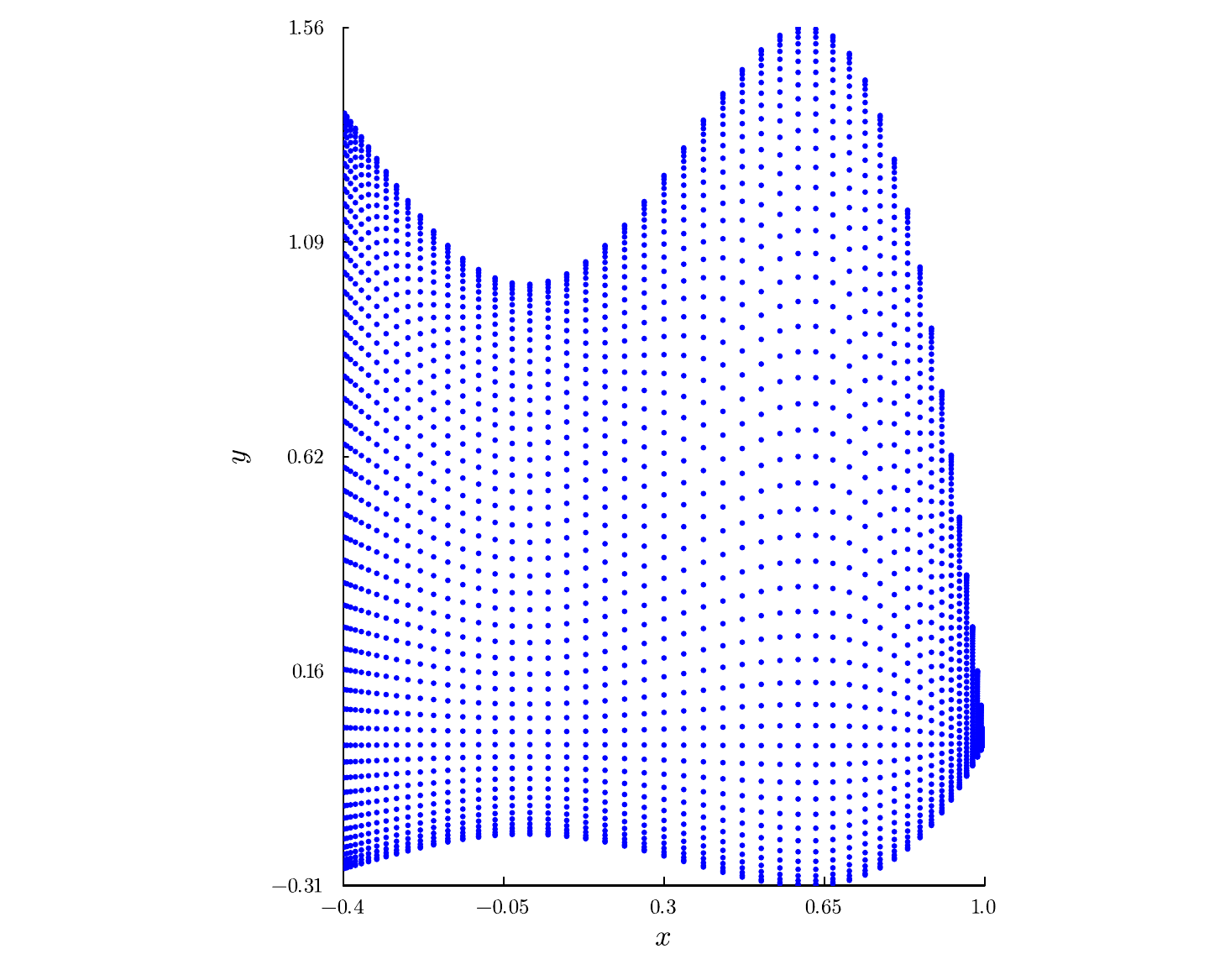}
  \end{minipage}
   \begin{minipage}[b]{0.5\textwidth}
    \centering
	\raisebox{-1cm}{
     \includegraphics[width=\linewidth]{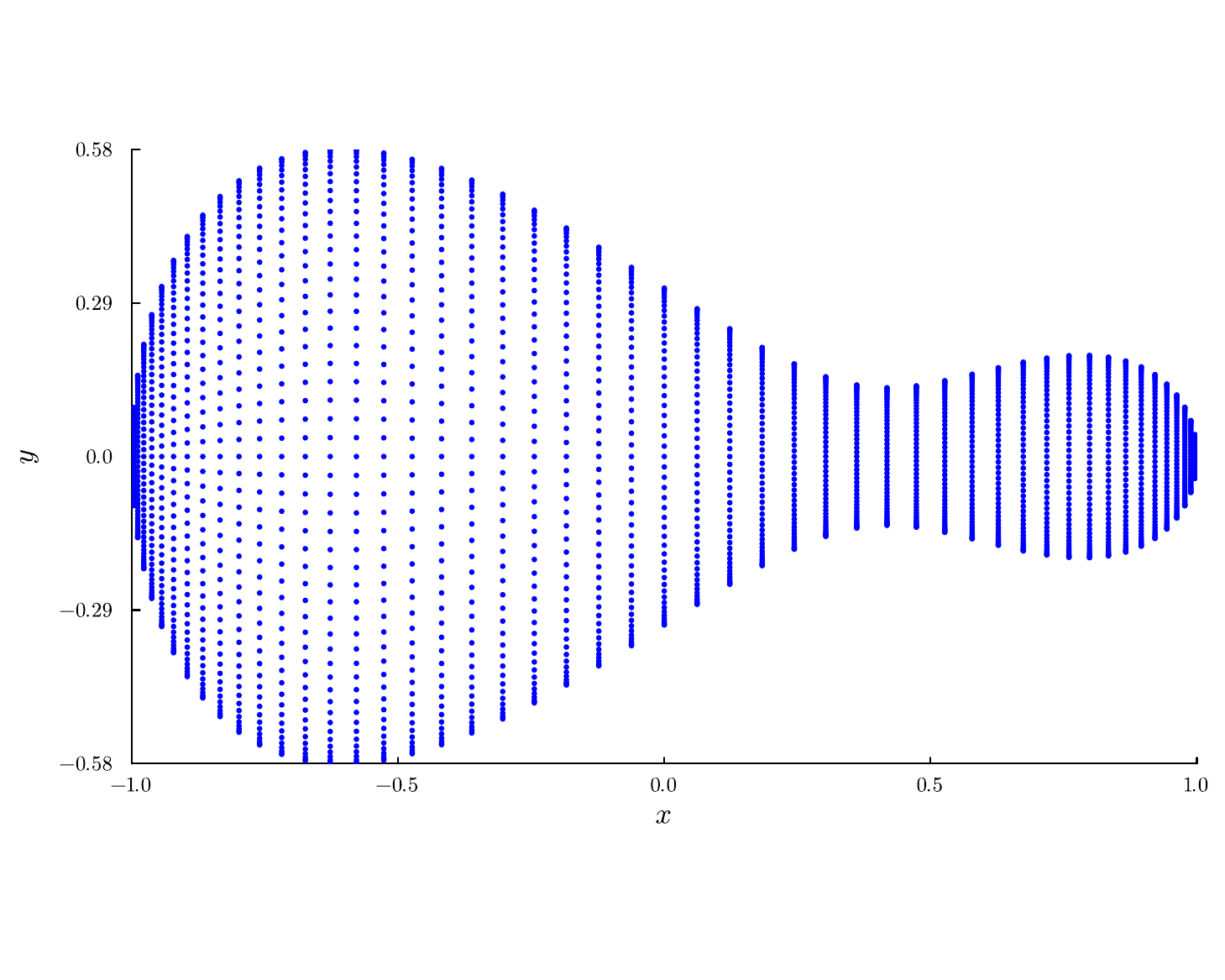} }
  \end{minipage}
  \caption{Tensor product Chebyshev grids \((x_m, t_n\,\rho(x_m))\) with \(N=50\) on the domains shown in Figure~\ref{fig:approx_solutions_Poisson_Helmholtz}.}
  \label{fig:sample_grid}
\end{figure}

To summarise, in the spectral method in the next section, we shall map between function values and Koornwinder coefficients and as follows:
\begin{equation}
\left\lbrace f(x_m,t_n\,\rho(x_m)) :  0 \leq m, n \leq N  \right\rbrace \quad  \xLeftrightarrow[\text{iDCT}]{\text{DCT}}\quad \mathbf{f_{T^2}} \quad  \xLeftrightarrow[\text{(\ref{eq:kron3})}]{\text{(\ref{Kronecker2})}}\quad \mathbf{f_{H}}.  \label{eq:maps}
\end{equation}

\section{Sparse spectral method for solving PDEs}\label{Koornwinder_Spectral_Methods}
           We now have all the building blocks for solving PDEs on generalised Koornwinder domains
 with zero Dirichlet conditions:
\begin{align*}
    \mathcal{L} u(x,y) = f(x,y) \:\:\text{in} \:\:\Omega, \quad  u(x,y) = 0 \:\:\text{on} \:\:\partial \Omega,
\end{align*}
where $\mathcal{L}$ is a linear differential operator,  $\Omega$ is the generalised Koornwinder domain defined in (\ref{eq:Omega})  
 and the solution is expanded in a weighted basis, defined in (\ref{eq:WKoorn}),
\begin{equation}
u(x,y) = \mathbf{W}^{(a,b,c,d)}(x,y)\mathbf{u}, \qquad \mathbf{u} = \left( \begin{array}{c}
\mathbf{u}_0 \\
\hline 
\mathbf{u}_1 \\
\hline
\mathbf{u}_2 \\
\hline
\vdots
\end{array}
\right), \qquad \mathbf{u}_n =  \left( \begin{array}{c}
{u}_{n,0} \\
{u}_{n,1} \\
\vdots \\
{u}_{n,n} 
\end{array}
\right) \in \mathbb{R}^{n+1}.  \label{eq:Kcoeffsblocks}
\end{equation} 
The  steps of the sparse spectral method for approximating the solution $u$  
 are provided in Algorithm~\ref{alg:Spectral methods}.
We consider the Poisson equation and a variant (the screened Poisson equation), an inhomogeneous variable-coefficient Helmholtz equation and the biharmonic equation, thereby demonstrating the versatility of the proposed approach.

\begin{algorithm}
\caption{Sparse spectral method with generalised Koornwinder polynomial basis functions $H^{(a,b,c,d)}_{n,k}$, for $0 \leq n \leq N$, $0 \leq k \leq n$}
\label{alg:Spectral methods}
\begin{algorithmic}[1]
\State \textbf{Fast transforms.} Obtain the Koornwinder coefficients of $f$, $\mathbf{f_H}$, using the transforms in (\ref{eq:maps}).  This step has complexity  $\mathcal{O}(N^3)$ and can be theoretically reduced to $\mathcal{O}(N^2 \log(N))$. 
 \State \textbf{Discretisation.}  Construct the sparse matrix representation $L_{W}$ of the differential operator $\mathcal{L}$ applied to $u(x,y)$ in $\mathcal{O}\left( (\deg\rho^2)^2 N^2\right)$ complexity.  

\State \textbf{Solve.} Perform LU factorisation combined with nested dissection to solve the sparse $M\times M$ linear system $L_{W} \,\mathbf{u} = \mathbf{f_H}$, where $M = (N+1)(N+2)/2$, in $\mathcal{O}\left( (\deg\rho^2)^3 N^3\right)$ complexity
 (using the UMFPACK library via \textsc{Julia}).

\State \textbf{Fast inverse transforms.} Map the solution's Koornwinder coefficients $\mathbf{u}$ to solution values $u(x_m, t_n \rho(x_m))$ using the transforms in (\ref{eq:maps}).    
The computational complexity is $\mathcal{O}(N^3)$ and can be theoretically reduced to $\mathcal{O}(N^2 \log(N))$.
     \end{algorithmic}
\end{algorithm}

\subsection{Poisson equation} \label{Poisson_Example_smoothdomain}
We consider the Poisson problem
\begin{align*}
    \Delta  u(x,y) = f(x,y) \:\:\text{in} \:\:\Omega, \quad  u(x,y) = 0 \:\:\text{on} \:\:\partial \Omega,
\end{align*}
where $\Omega$ is the non-convex smooth domain shown in the right frame of Figure~\ref{fig:approx_solutions_Poisson_Helmholtz}, defined by (\ref{eq:Omega}) with \( (\alpha, \beta, \gamma, \delta) = (-1, 1, -1, 1) \) and \(\rho(x) = \sqrt{(1-x^2)(0.1 - 0.4x + 0.5x^2)} \), hence $\Omega$ is the region $y^2 \leq \rho^2$.  As shown in Appendix~\ref{Generalised_Koornwinder_polynomials_smooth_domains}, on smooth domains, we use the one-parameter family of generalised Koornwinder polynomials $\mathbf{H}^{(a)}(x,y):= \mathbf{H}^{(0,0,a,a)}(x,y)$ and weighted polynomials $\mathbf{W}^{(a)}(x,y):= \mathbf{W}^{(0,0,a,a)}(x,y)$, where $W^{(a)}(x,y) = (\rho^2 - y^2)^a$.  We expand the solution in the weighted basis $\mathbf{W}^{(1)}$ to satisfy the zero Dirichlet boundary condition, hence setting $u = \mathbf{W}^{(1)}\mathbf{u}$, we have from (\ref{Laplacian_Operator_degenerate}) that $\Delta u = \Delta \mathbf{W}^{(1)}\mathbf{u} = \mathbf{H}^{(1)}\Delta_{W,(1)}^{(1)} \mathbf{u}$, where $\Delta_{W,(1)}^{(1)}$ is a sparse matrix Laplacian with block-bandwidths $(4,4)$ and each block has bandwiths $(2, 2)$ (see Proposition~\ref{prop:smoothoperator}). Since $\Delta u$ is expressed in the $\mathbf{H}^{(1)}$-basis, we express $f(x,y)$ in the same basis as  $f = \mathbf{H}^{(1)} \mathbf{f_H}$ and hence the Poisson problem is equivalent to the infinite linear system
\[
	\Delta_{W,(1)}^{(1)} \,\mathbf{u} = \mathbf{f_H},
\]
whose solution we approximate  by following the steps in Algorithm~\ref{alg:Spectral methods}.

To test the spectral method, and as an indication of its accuracy, we consider two Poisson problems with the following exact solutions: $u_1 := W^{(1)}(x,y) \,e^{-100(x^2+y^2)}$ and $u_2:= u_1\cos\left(120(x+y)^2\right)$, for which the right-hand side (RHS) functions are $f(x,y) := \Delta u_1$ and $\Delta u_2$.
   To estimate the error, we generate a uniform $750\times 750$ grid on a rectangle containing $\Omega$ and then compute the error at the grid points inside $\Omega$ for the approximate solutions obtained via Algorithm~\ref{alg:Spectral methods}.  On this grid, the $\infty$-norm absolute and relative errors for $u_1$ are, respectively, $\approx 1.4\cdot 10^{-12}$ and $1.4\cdot10^{-11}$ and for $u_2$, they are $\approx 1.8\cdot 10^{-11}$ and $1.8\cdot10^{-10}$. 
  
In Figure~\ref{fig:Poisson_BowlingPin_solution}, we plot the norms $\| \mathbf{u}_n \|_{\infty}$ of the generalised Koornwinder coefficients (see (\ref{eq:Kcoeffsblocks})) of $u_1$ (orange), $u_2$ (purple) and solutions to Poisson problems for which we do not have closed-form solutions for $0 \leq n \leq N$ with $N = 1000$, which are obtained by solving sparse $M \times M$ linear systems with $M = (N+1)(N+2)/2 = 501,501$. 

Since the solutions $u_1$ and $u_2$ are rapidly decaying entire functions that vanish to all orders below the level of machine precision on $\partial \Omega$ (the boundary $\partial\Omega$ is essentially invisible to these solutions), we observe super-exponential decay of their coefficients in Figure~\ref{fig:Poisson_BowlingPin_solution}.  We find that the coefficients of the other solutions in Figure~\ref{fig:Poisson_BowlingPin_solution} decay exponentially at roughly the rate\footnote{For Poisson problems on smooth domains with entire RHS functions, we have found that the rate of exponential decay of the solution coefficients is determined by $\partial\Omega$, however the precise relationship between the rate of decay and $\partial\Omega$ is not known to us.} $\mathcal{O}(e^{-0.13n})$.  Figure~\ref{fig:Poisson_BowlingPin_solution} shows that for RHS functions that are small or vanish on $\partial\Omega$ (red, orange, purple), the decay of the coefficients plateau at levels below machine precision (i.e., roughly $10^{-16}$), while for RHS functions that do not vanish on $\Omega$ (green, blue), the coefficients do not plateau.  Presumably this is due to rounding errors.

\begin{figure}[h]
  \centering
\makebox[\linewidth][c]{\parbox[c][1.8em][c]{\linewidth}{\centering\footnotesize Decay of expansion coefficients of solutions to the Poisson equation}}
               \includegraphics[width=0.85\linewidth]{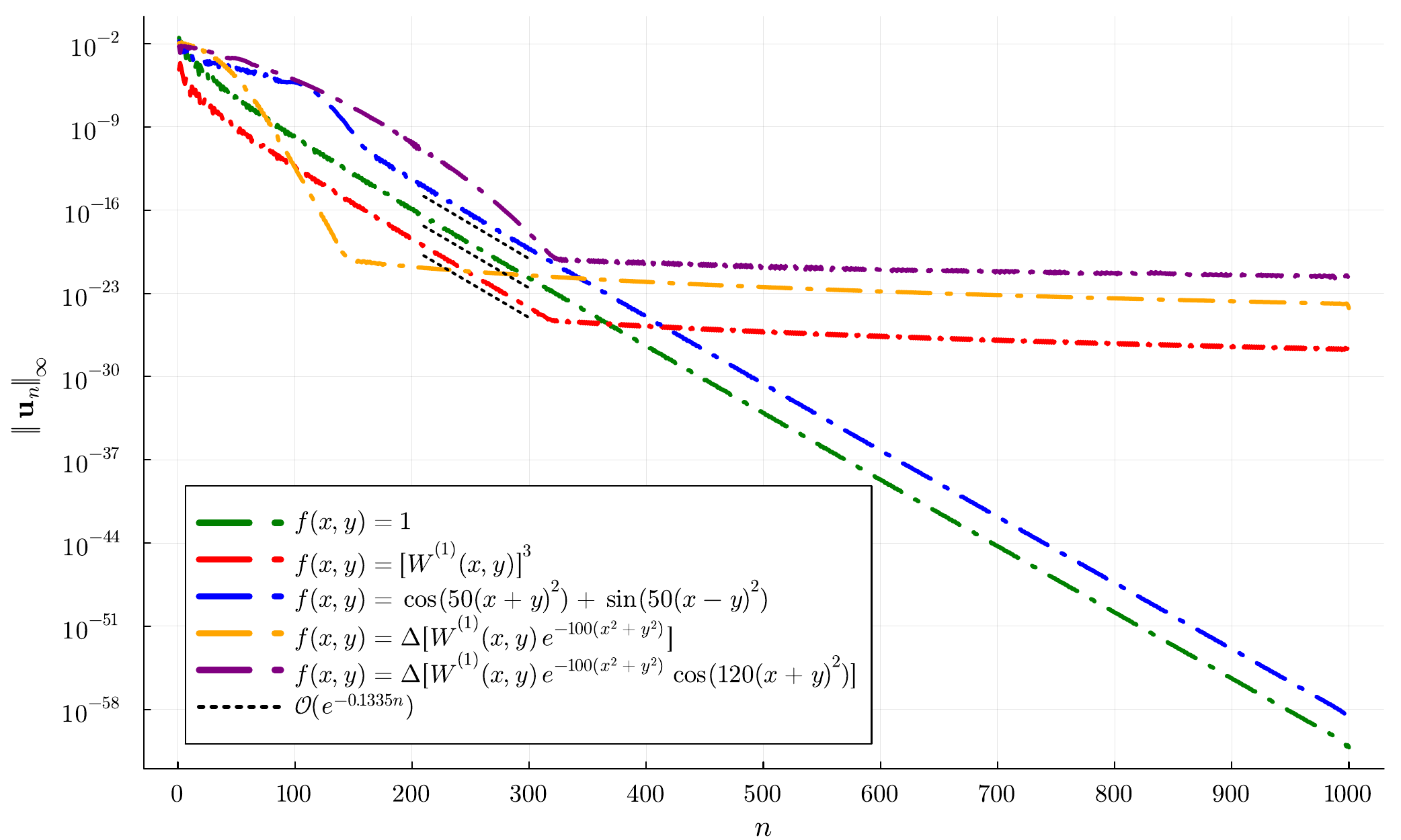}
     \caption{ 
 	The norms of each block of calculated coefficients of the approximate solution to the Poisson equation in section \ref{Poisson_Example_smoothdomain} for various RHS functions.  
 	}
  \label{fig:Poisson_BowlingPin_solution}
   \end{figure}

\subsection{Screened Poisson equation} \label{Poisson_Example}
We consider the screened Poisson equation
\begin{align*}
   \left( \Delta  - k^2\right) u(x,y) = f(x,y) \:\:\text{in} \:\:\Omega, \quad  u(x,y) = 0 \:\:\text{on} \:\:\partial \Omega,
\end{align*}
where $k=100$, $\Omega$ is the piecewise smooth region shown on the left in Figure~\ref{fig:Screened_Poisson_fish_solution} and is defined by (\ref{eq:Omega}) with  \( (\alpha, \beta, \gamma, \delta) = (0, 1, -0.3, 0.3) \) and \(\rho(x) = \sqrt{(0.5 + x)(1 + 6x^2 - 20x^4 + 15x^6)}\).  
 Expanding the solution in the weighted basis, $u = \mathbf{W}^{(1,1,1,1)}\mathbf{u}$, the operator $\mathcal{L} = \Delta - k^2$   applied to $u$ is expressed in the $\mathbf{H}^{(1,1,1,1)}$-basis, $\mathcal{L}u = \mathcal{L} \mathbf{W}^{(1,1,1,1)}\mathbf{u} = \mathbf{H}^{(1,1,1,1)}L_W \mathbf{u}$, hence $f$ is expanded as $f = \mathbf{H}^{(1,1,1,1)} \mathbf{f_H}$ and thus we solve $L_W \mathbf{u} = \mathbf{f_{H}}$, where
\begin{equation}
L_W \mathbf{u} :=	\left(\Delta_{W,(1,1,1,1)}^{(1,1,1,1)} - k^2 \, T_{(0,0,0,0)}^{(1,1,1,1)}  \, T_{W,(1,1,1,1)}^{(0,0,0,0)}\right) \,\mathbf{u} = \mathbf{f_H},  \label{eq:ScreenedPossionsyst}
\end{equation}
and the operator matrices on the left are defined in (\ref{Laplacian_Operator}), (\ref{eq:Tinc1}) and (\ref{eq:TWdec1}).
The left frame of Figure~\ref{fig:Laplacian_Biharmonic_sparsity} shows the sparse structure of the Laplace operator matrix $\Delta_{W,(1,1,1,1)}^{(1,1,1,1)}$ for this problem,
and the computed solution corresponding to the RHS function $f(x,y) = W^{(1,1,1,1)}(x,y)\, e^x$ is plotted in the left of Figure~\ref{fig:Screened_Poisson_fish_solution}.
           
As in the previous section, we estimate the error on a uniform grid for two known solutions to the screened Poisson equation, namely $u=v_1 := W^{(1,1,1,1)}(x,y) \,e^{-100(x^2+y^2)}$ and $u=v_2 := v_1\cos\left(300(x+y)^2  \right)$.  We find that the infinity-norm absolute and relative errors on the grid for $v_1$ are $\approx 1.7\cdot 10^{-17}$ and $8.2\cdot 10^{-15}$, respectively and for $v_2$, they are $\approx 6.6\cdot 10^{-15}$ and $3.3\cdot 10^{-12}$.

Due to the presence of corner singularities~\cite{grisvard2011elliptic},  in the right frame of Figure~\ref{fig:Screened_Poisson_fish_solution}  the coefficients of the solutions to the screened Poisson equation decay algebraically, at a rate which we determined numerically to be approximately $\mathcal{O}(n^{-4.5})$ as $n \to \infty$ (see the dotted line in Figure~\ref{fig:Screened_Poisson_fish_solution}).   

\begin{figure}[h]
  \centering
  
   \begin{minipage}[b]{0.28\textwidth}
    \centering
    \includegraphics[width=\linewidth]{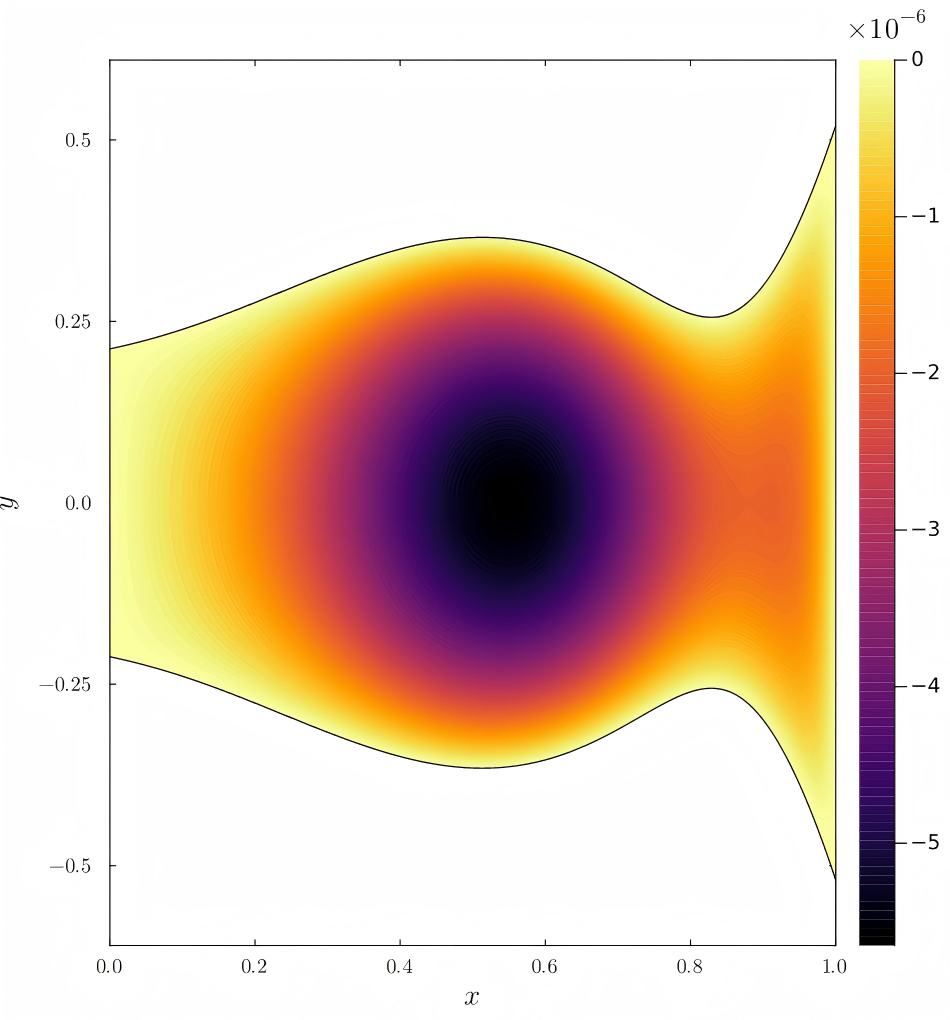}
  \end{minipage}
  \hspace{0.005\textwidth}
  \begin{minipage}[b]{0.7\textwidth}
    \centering
	\makebox[\linewidth][c]{\parbox[c][1.8em][c]{\linewidth}{\centering\scriptsize Decay of expansion coefficients of solutions to the screened Poisson equation}}
    \includegraphics[width=\linewidth]{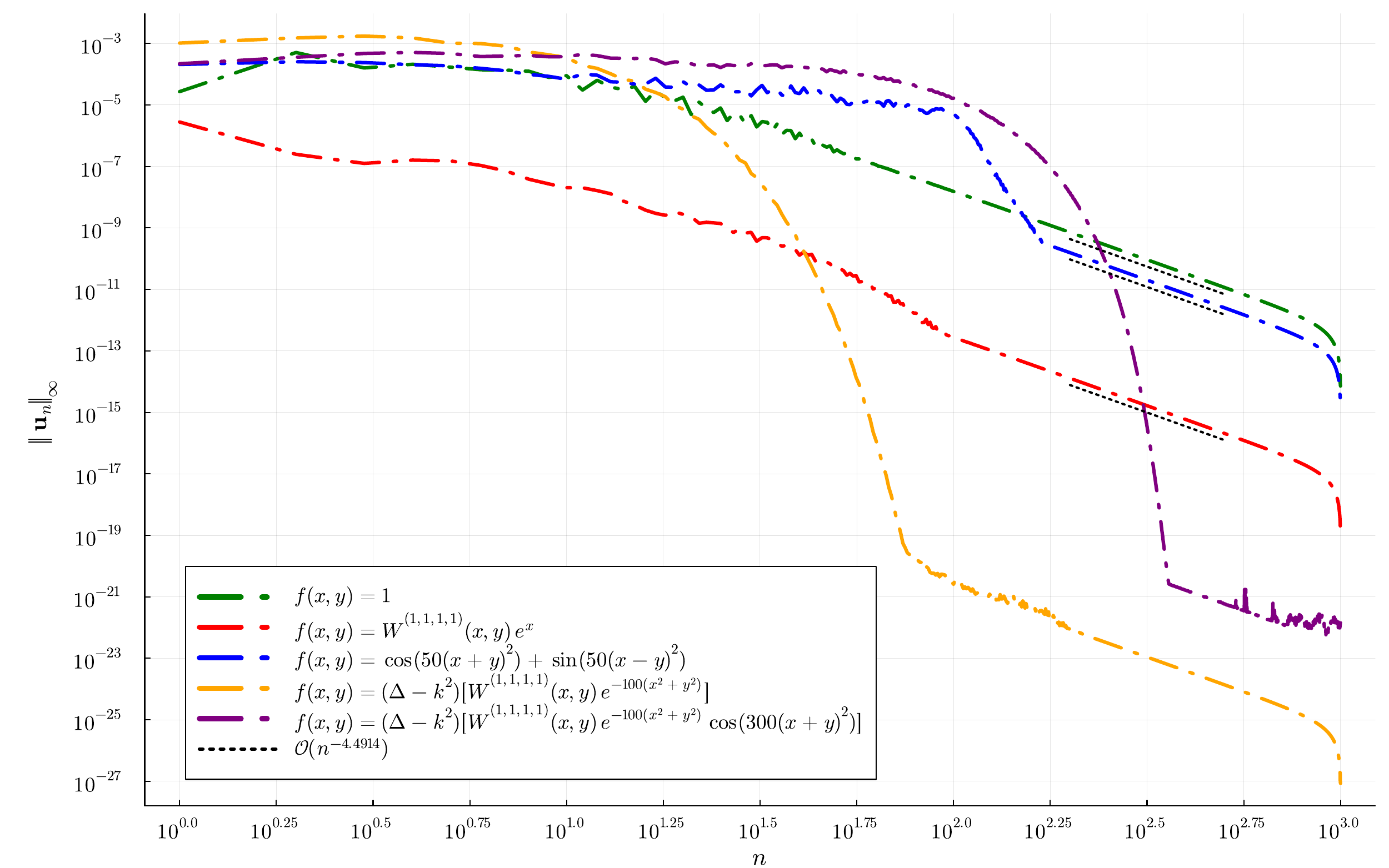}
  \end{minipage}

  \caption{Left: The approximate solution to \((\Delta - k^2)u = f\), subject to zero Dirichlet conditions, with \(f(x,y) = W^{(1,1,1,1)}(x,y)\, e^x\). Right: The norms of each block of calculated coefficients of the approximate solution to the screened Poisson equation for various RHS functions.  Note that Figure~\ref{fig:Poisson_BowlingPin_solution} is on a log-linear set of axes whereas this plot is on a log-log scale. }  
  \label{fig:Screened_Poisson_fish_solution}
\end{figure}

\subsection{Inhomogeneous variable-coefficient Helmholtz equation} \label{Helmholtz_Example}
Now we consider a variable-coefficient Helmholtz equation with zero Dirichlet conditions, i.e.,
\begin{align}
   \left( \Delta   + k^2 v(x,y)\right) u(x,y)= f(x,y) \:\:\text{in} \:\:\Omega, \quad  u(x,y) = 0 \:\:\text{on} \:\:\partial \Omega, \label{eq:vHelm}
\end{align}
where \(k=100\), $v(x,y) =  1 - \left[3(x-0.2)^2+2(y-0.4)^2\right]$, $\Omega$ is shown in the left frame of Figure~\ref{fig:approx_solutions_Poisson_Helmholtz}  and is defined by (\ref{eq:Omega}) with \( (\alpha, \beta, \gamma, \delta) = (-0.4, 1, -0.2, 1) \) and \(\rho(x) = 1+3x^2-4x^4\).  Expanding the solution in the weighted basis, $u = \mathbf{W}^{(1,1,1,1)}\mathbf{u}$, the operator $\mathcal{L} = \Delta + k^2v(x,y)$   applied to $u$ is expressed in the $\mathbf{H}^{(1,1,1,1)}$-basis, $\mathcal{L}u = \mathcal{L} \mathbf{W}^{(1,1,1,1)}\mathbf{u} = \mathbf{H}^{(1,1,1,1)}L_W \mathbf{u}$, hence $f$ is expanded as $f = \mathbf{H}^{(1,1,1,1)} \mathbf{f_H}$ and thus we solve $L_W \mathbf{u} = \mathbf{f_{H}}$, where
   and the operator matrices on the left are defined in (\ref{Laplacian_Operator}), (\ref{eq:Tinc1}), (\ref{eq:TWdec1}) and \( v(J_x^{(0,0,0,0)}, J_y^{(0,0,0,0)}) \) can be obtained via the bivariate Clenshaw algorithm with matrix inputs~\cite{TriangleOPs}.  A finite section of the operator matrix $L_W$ is shown in the middle frame of Figure~\ref{fig:Laplacian_Biharmonic_sparsity}, 
and the computed solution corresponding to the RHS function $f(x,y) = W^{(1,1,1,1)}(x,y)\,e^{x}$ is plotted on the left in Figure~\ref{fig:approx_solutions_Poisson_Helmholtz}.

As in the previous sections, we estimate the error on a uniform grid for two known solutions to the (\ref{eq:vHelm}), namely $u=w_1 := W^{(1,1,1,1)}(x,y) \,e^{-100(x^2+y^2)}$ and $u=w_2 := w_1\cos\left(100(x+y)^2  \right)$.
  We find that the infinity-norm absolute and relative errors on the grid for $w_1$ are $\approx 1.1\cdot 10^{-13}$ and $1.4\cdot 10^{-12}$, respectively and for $w_2$, they are $\approx 2.7\cdot 10^{-12}$ and $3.2\cdot 10^{-11}$.  Figure~\ref{fig:Helmholtz_solution} shows the norms of the coefficients of $w_1$ (black), $w_2$ (purple) and for other solutions for which we do not have exact solutions available.  

\begin{figure}[h]
  \centering
  \makebox[\linewidth][c]{\parbox[c][1.8em][c]{\linewidth}{\centering\scriptsize Decay of expansion coefficients of solutions to the variable-coefficient Helmholtz equation}}
       \includegraphics[width=0.85\linewidth]{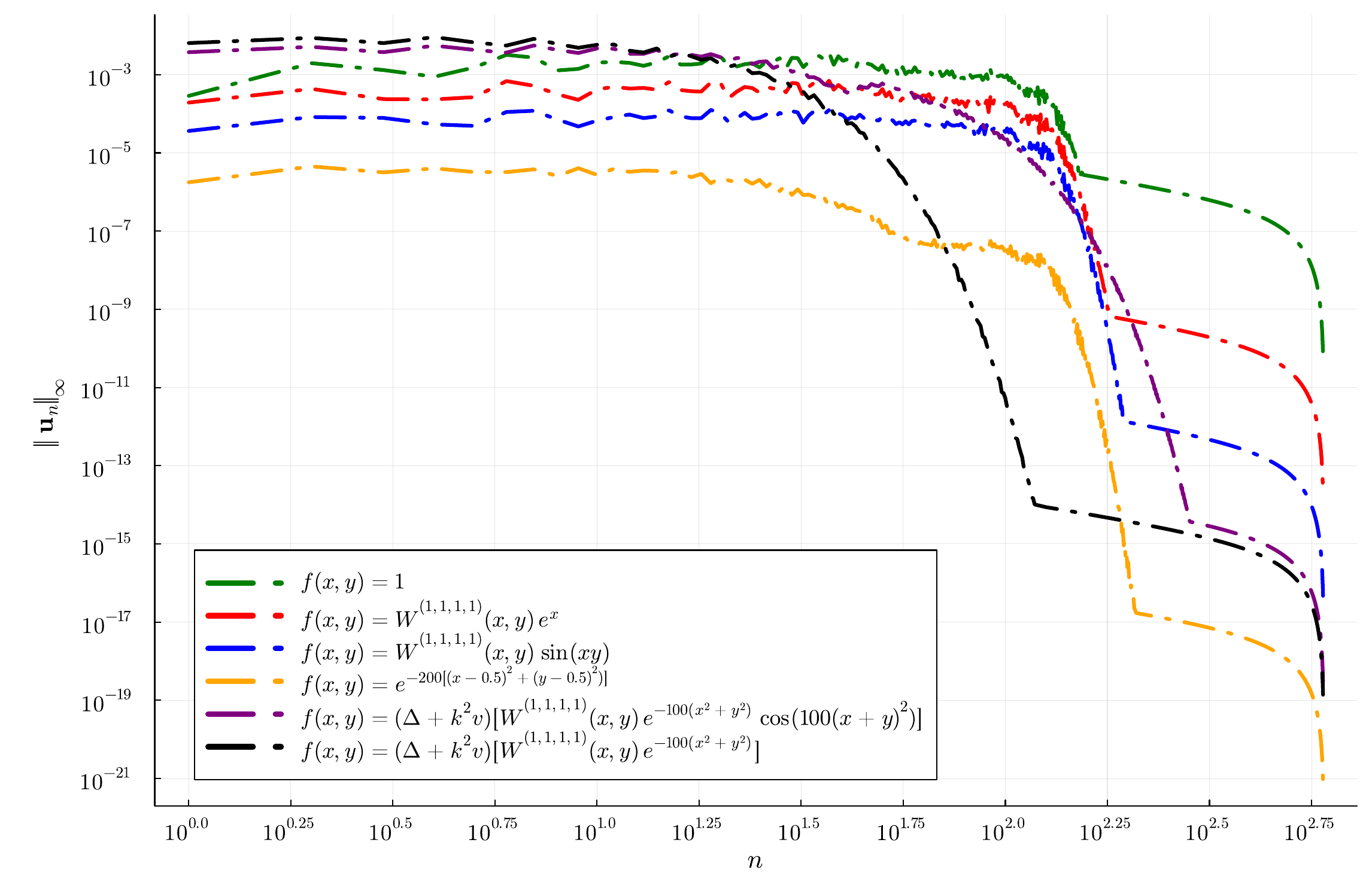}
             \caption{ 
 	The norms of each block of calculated coefficients of the approximate solution to the variable-coefficient Helmholtz equation in section \ref{Helmholtz_Example} for various RHS functions.}
  \label{fig:Helmholtz_solution}
\end{figure}

\subsection{Biharmonic equation} \label{Biharmonic_Example}
We consider the biharmonic equation with zero Dirichlet and Neumann conditions,
\begin{align*}
      \Delta^2 u = f \:\:\text{in} \:\:\Omega, \qquad  u = 0, {\partial u  \over \partial n}=0\:\:\text{on} \:\:\partial \Omega,
\end{align*}
where $\Omega$ is shown in the left frame of Figure~\ref{fig:biharmonic_EX} and is defined by (\ref{eq:Omega}) with \( (\alpha, \beta, \gamma, \delta) = (0, 0.8, 0, 1) \) and \(\rho(x) = 0.6x^2 + 0.2\).  Setting $u = \mathbf{W}^{(2,2,2,2)}\mathbf{u}$, we have from  (\ref{Biharmonic_Operator}) that $\Delta^2 u = \Delta^2 \mathbf{W}^{(2,2,2,2)}\mathbf{u} =  \mathbf{H}^{(2,2,2,2)}{} _2\Delta_{W, (2,2,2,2)}^{(2,2,2,2)} \mathbf{u}$.  Setting $f = \mathbf{H}^{(2,2,2,2)}\mathbf{f_{H}}$, we need to solve 
\begin{equation}
	_2\Delta_{W, (2,2,2,2)}^{(2,2,2,2)} \,\mathbf{u} = \mathbf{f_H}.  \label{eq:biharmsyst}
\end{equation}
A finite section of the operator matrix $_2\Delta_{W, (2,2,2,2)}^{(2,2,2,2)}$ is shown in the right frame of Figure~\ref{fig:Laplacian_Biharmonic_sparsity}. 
Following the steps of Algorithm~\ref{alg:Spectral methods}, we obtain the solution corresponding to the RHS function $f(x,y) = 10^4 \sin(30\pi x) \cos(30\pi y)$ on the left in Figure~\ref{fig:biharmonic_EX}.
            
As in the previous sections, we estimate the error on a uniform grid for two known solutions to the biharmonic equation, namely $u=s_1 := W^{(2,2,2,2)}(x,y) \,e^{-100(x^2+y^2)}$ and $u=s_2 := s_1\cos\left(300(x+y)^2  \right)$.  We find that the infinity-norm absolute and relative errors on the grid for $s_1$ are $\approx 4.3\cdot 10^{-21}$ and $4.3\cdot 10^{-14}$, respectively and for $s_2$, they are $\approx 1.8\cdot 10^{-19}$ and $1.8\cdot 10^{-12}$.  Figure~\ref{fig:biharmonic_EX} shows the norms of the coefficients of $s_1$ (black), $s_2$ (purple) and for other solutions for which we do not have exact solutions available.  

\begin{figure}[h]
  \centering
  
   \begin{minipage}[b]{0.28\textwidth}
    \centering
    \includegraphics[width=\textwidth]{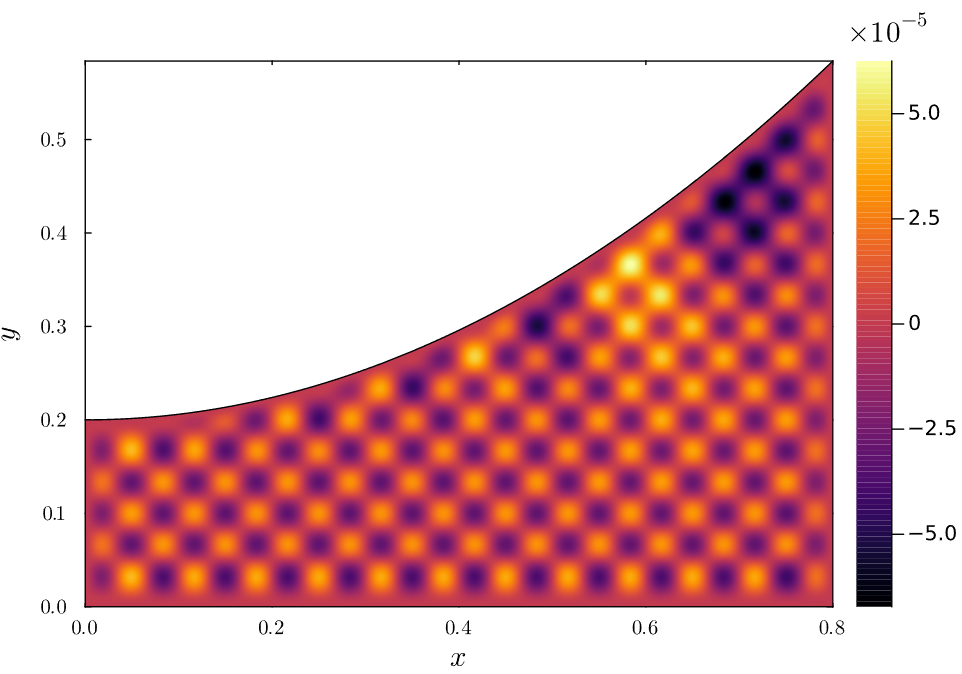}
  \end{minipage}
  \hspace{0.005\textwidth}
  \begin{minipage}[b]{0.7\textwidth}
    \centering
	\makebox[\linewidth][c]{\parbox[c][1.8em][c]{\linewidth}{\centering\scriptsize Decay of expansion coefficients of solutions to the biharmonic equation}}
    \includegraphics[width=\linewidth]{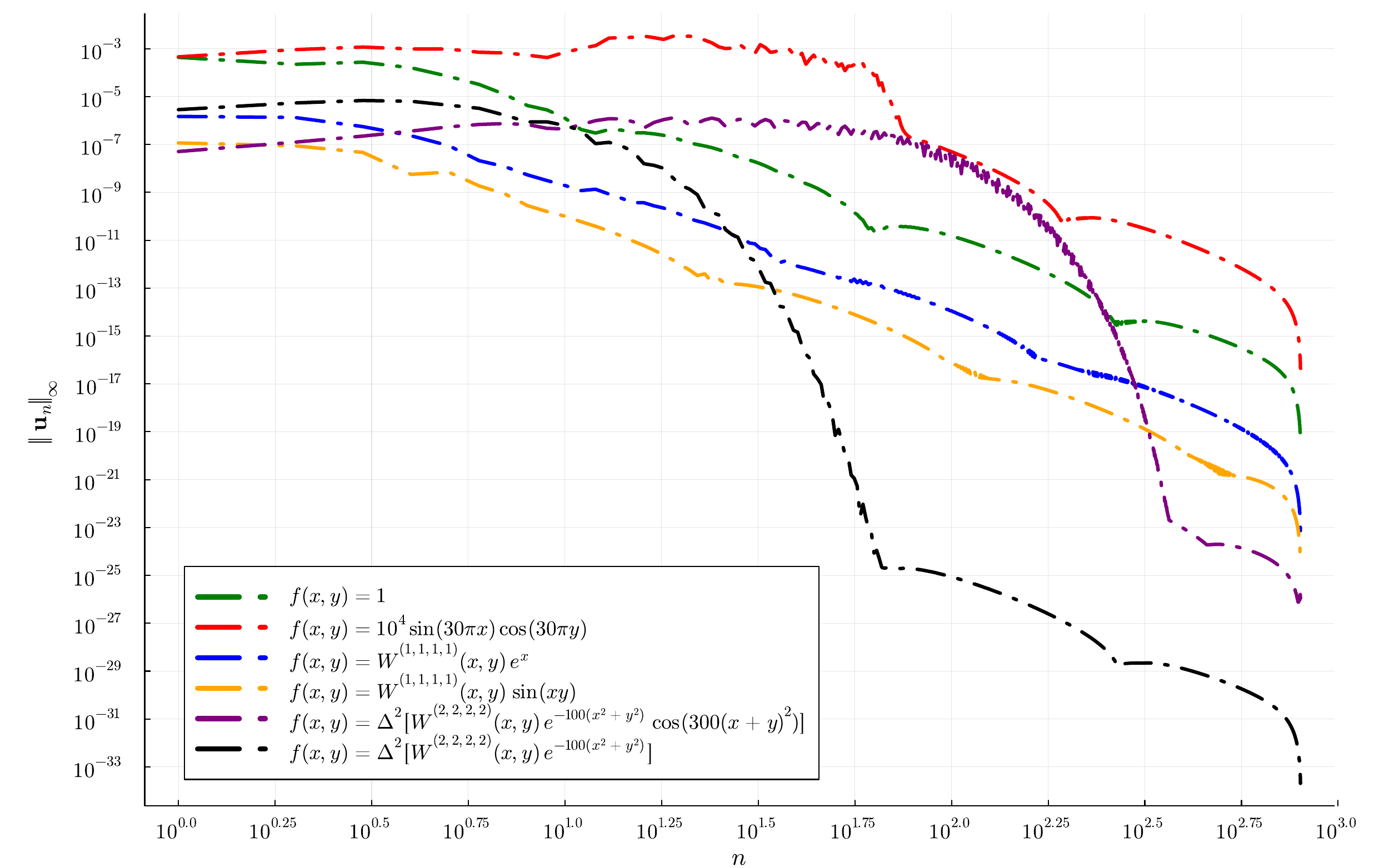}
  \end{minipage}

        \caption{Left: The approximate solution to \(\Delta^2 u = f\), subject to zero Dirichlet and Neumann conditions, with \(f(x,y) = 10^4 \sin(30\pi x) \cos(30\pi y)\). Right: the norms of each block of calculated coefficients of the approximate solution to the biharmonic equation for various RHS functions. }
   \label{fig:biharmonic_EX}
\end{figure}

\section{Conclusions and future work}

We describe fast algorithms using generalised Koornwinder polynomials,  
which allow us to develop a sparse spectral method with high accuracy and rapid convergence for solving general linear PDEs on complicated 2D geometries bounded by algebraic curves.  To the best of our knowledge, this is the only known global spectral method to obtain sparse matrices for PDEs with polynomial coefficients on such geometries.
 This work serves as a stepping stone to constructing sparse spectral methods on 3D geometries bounded by algebraic surfaces, such as star domains and surfaces of revolution associated with the generalised Koornwinder domains discussed in this paper.
Another extension is to piece together semiclassical elements and develop a sparse spectral element method, for which some foundational work has already been done in \cite{VandenHeuvel}.
These methods have potential applications in numerical weather
prediction \cite{weather}, acoustic and elastic wave propagation \cite{komatitsch2000simulation} and medical imaging \cite{marty2024transcranial}.

  \bibliographystyle{abbrvurl}
\bibliography{references}

\appendix 
\section{Generalised Koornwinder polynomials on smooth domains} \label{Generalised_Koornwinder_polynomials_smooth_domains}

It can be shown that for a generalised Koornwinder domain (\ref{eq:Omega}) to be smooth, $\rho$ must take the form
\begin{equation*}
\rho = \sqrt{(x-\alpha)(\beta - x)\phi(x)}, \qquad x \in [\alpha, \beta],
\end{equation*}
where $\phi$ is a polynomial satisfying $\phi(x) > 0$ for $x \in [\alpha, \beta]$, and thus the domain is
\begin{equation*}
\Omega = \left\lbrace (x,y) \in \mathbb{R}^2 \: :\: y^2 \leq \delta^2\rho^2 = \delta^2(x-\alpha)(\beta - x)\phi(x) \right\rbrace, \qquad \delta > 0.
\end{equation*}
For any other generalised Koornwinder domain,
   $\partial \Omega$ has corners at $(x, y) = (\alpha,\gamma\rho(\alpha))$, $(\alpha,\delta\rho(\alpha))$ or $(\beta,\gamma\rho(\beta))$, $(\beta,\delta\rho(\beta))$.

We require that the generalised Koornwinder polynomials on $\Omega$ be orthogonal with respect to the weight
\begin{eqnarray*}
W^{(a)}(x,y) = (\delta^2\rho^2 - y^2)^a = \rho^{2a}\left(\delta^2 - \frac{y^2}{\rho^2}\right)^a = w_R^{(0,0,2a)}(x)w_P^{(a,a)}\left(\frac{y}{\rho}\right), \\
  \end{eqnarray*}
where $a > -1$, the univariate weights $ w_R^{(0,0,2a)}$ and $w_P^{(a,a)}$ are defined in (\ref{eq:wRwPdef}) and they are supported on $[\alpha, \beta]$ and $[-\delta, \delta]$, respectively.   It now follows from Theorem~\ref{thm:genKoornwinder} that the one-parameter family of generalised Koornwinder polynomials
\begin{equation}\label{KoornwinderFomulaDegenerate}
	H_{n,k}^{(a)}(x,y) := R_{n-k}^{(0,0,2a+2k+1)}(x) \, \rho(x)^k \, \widetilde{P}_k^{(a,a)}\left(\frac{y}{\rho(x)}\right), \quad (x,y) \in \Omega, \quad 0\leq k \leq n, n \geq 0, 
\end{equation}
form an orthonormal basis with respect to the inner product
\begin{equation*}
	\langle f, \, g \rangle_{W^{(a)}} := \iint_\Omega f(x,y) \, g(x,y) \,W^{(a)}(x,y).
\end{equation*}
Note that the one-parameter family of Koornwinder polynomials (\ref{KoornwinderFomulaDegenerate}) corresponds to the four-parameter family $H_{n,k}^{(a,b,c,d)}$, defined in  (\ref{KoornwinderFomula}), with $a = 0 =b$ and $c = d := a$,  i.e., $H_{n,k}^{(a)} := H_{n,k}^{(0,0,a,a)}$.  Similarly, for the general weight $W^{(a,b,c,d)}$ defined in (\ref{eq:Wdef}) with $\gamma := -\delta$, we have that $W^{(a)}(x,y):= W^{(0,0,a,a)}(x,y)$.  Hence, on a smooth domain, the four-parameter family of generalised Koornwinder polynomials reduces to a one-parameter family of polynomials.

                                  The quasi-matrix of generalised Koornwinder polynomials on $\Omega$ is denoted by $\mathbf{H}^{(a)}(x,y)$ and the weighted generalised Koornwinder polynomials are defined by
\begin{align*}
	\mathbf{W}^{(a)}(x,y) := W^{(a)}(x,y) \, \mathbf{H}^{(a)}(x,y).
\end{align*}
 The differentiation matrices $D_{x}^{(a)},  D_y^{(a)},  W_{x}^{(a)}$ and $W_y^{(a)}$ are defined by
	\begin{align*}
		{\partial  \over \partial x}\mathbf{H}^{(a)}(x,y) &= \mathbf{H}^{(a+1)}(x,y) \: D_{x}^{(a)}, \quad
		{\partial  \over \partial y}\mathbf{H}^{(a)}(x,y) = \mathbf{H}^{(a+1)}(x,y) \: D_y^{(a)}, \\
		{\partial \over \partial x}\mathbf{W}^{(a)}(x,y) &= \mathbf{W}^{(a-1)}(x,y) \: W_{x}^{(a)},  \quad 
		{\partial \over \partial y}\mathbf{W}^{(a)}(x,y) = \mathbf{W}^{(a-1)}(x,y) \: W_y^{(a)}.
	\end{align*}
	Using these differentiation matrices, we construct the following matrix Laplacian, which we shall use in our numerical experiments
 \begin{equation}
\Delta \mathbf{W}^{(1)}(x,y) = \mathbf{H}^{(1)}(x,y)\Delta_{W,(1)}^{(1)}, \qquad \Delta_{W,(1)}^{(1)} = D_x^{(0)}\,W_x^{(1)} 
+ D_y^{(0)}\,W_y^{(1)}   \label{Laplacian_Operator_degenerate}
\end{equation}

\begin{proposition}\label{prop:smoothoperator}
	The above differentiation matrices and matrix Laplacian are sparse, with the following bandwidths: 
	\begin{itemize}
		\item $D_{x}^{(a)}$ has block-bandwidths $(\deg \rho^2 - 3,\deg \rho^2 - 1)$ and each block has bandwidths $(0,2)$.
		\item $D_y^{(a)}$ has block-bandwidths $(-1,1)$ and each block has bandwidths $(-1,1)$.
		\item $W_{x}^{(a)}$ has block-bandwidths $(\deg \rho^2 - 1,\deg \rho^2 - 3)$ and each block has bandwidths $(2,0)$.
		\item $W_y^{(a)}$ has block-bandwidths $(1,-1)$ and each block has bandwidths $(1,-1)$.
		\item $\Delta_{W,(1)}^{(1)}$ has block-bandwidths $(2\deg\rho^2 - 4, 2\deg\rho^2 - 4)$ and each block has bandwidths $(2,2)$.
	\end{itemize}
\end{proposition}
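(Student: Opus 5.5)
The plan is to follow the proofs of Propositions~\ref{thm:partialy} and~\ref{Diff_matrix}, specialised to the one-parameter family $H^{(a)}_{n,k}=H^{(0,0,a,a)}_{n,k}$. The one feature that changes the bandwidths is that $\rho^2=(x-\alpha)(\beta-x)\phi$ vanishes at both endpoints. The $y$-statements need no new work. The identity (\ref{eq:partyformula}) holds verbatim with $c=d=a$, giving $\partial_y H^{(a)}_{n,k}=d^{(a+1,a+1)}_{k-1}H^{(a+1)}_{n-1,k-1}$. The weighted identity likewise gives $\partial_y[W^{(a)}H^{(a)}_{n,k}]=-d^{(a,a)}_kW^{(a-1)}H^{(a-1)}_{n+1,k+1}$. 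These yield the bandwidths $(-1,1)$ and $(1,-1)$ for $D^{(a)}_y$ and $W^{(a)}_y$.

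For $D^{(a)}_x$ I will compute the coefficients $c_{m,j}=\langle\partial_xH^{(a)}_{n,k},H^{(a+1)}_{m,j}\rangle_{W^{(a+1)}}$. With $t=y/\rho$,
\[
\partial_xH^{(a)}_{n,k}=R'_{n-k}\rho^k\widetilde P_k(t)+R_{n-k}\rho^{k-1}\rho'\bigl(k\widetilde P_k(t)-t\widetilde P_k'(t)\bigr),
\]
where $R_{n-k}=R^{(0,0,2a+2k+1)}_{n-k}$ and $\widetilde P_k=\widetilde P^{(a,a)}_k$.
\begin{itemize}
\item $t$-integral. By parity and (\ref{ClassicalRaising})--(\ref{ClassicalDerivative}), both $\widetilde P_k$ and $k\widetilde P_k-t\widetilde P_k'$ expand in $\widetilde P^{(a+1,a+1)}_j$ only for $j\in\{k,k-2\}$. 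This gives the three-term block structure and, after reindexing, the inner bandwidths $(0,2)$.
\item $x$-integral, case $j=k$. Using $\rho^{2k+1}\rho'=\tfrac12(\rho^2)'\rho^{2k}$, the integrand is $R^{(0,0,2a+2k+3)}_{m-k}\,\rho^{2a+2k+1}q$ with $q=\rho^2R'_{n-k}+\tfrac{c_k}{2}(\rho^2)'R_{n-k}$ and $\deg q=n-k-1+\deg\rho^2$.
\item Integration by parts, the step that uses smoothness. Moving the derivative off $R_{n-k}$ produces no boundary terms, because $\rho^{2a+2k+3}$ vanishes at $\alpha$ and $\beta$. (For a domain with corners the factor $(\beta-x)(x-\alpha)$ must instead come from raising $a,b$.) The result is
\[
\int R_{n-k}\,\rho^{2a+2k+1}\bigl[\rho^2R'_{m-k}+(\cdot)(\rho^2)'R_{m-k}\bigr]\,\mathrm{d}x .
\]
\item Bounds. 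Orthogonality of $R_{n-k}$ under $\rho^{2a+2k+1}$ then forces $m\ge n+1-\deg\rho^2$. Comparing degrees in the original form bounds $m$ from above.
\item Case $j=k-2$. This is treated identically with the shifted weight exponents, and the index shifts produce the offsets $-3$ and $-1$.
\end{itemize}

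The weighted matrices come from adjointness rather than a separate computation. By the argument in the proof of the weighted $\partial_y$ proposition (orthonormality of $W^{(a)}H^{(a)}$ under $W^{(-a)}$, plus integration by parts with the boundary weight vanishing on $\partial\Omega$), $W^{(a+1)}_x=-(D^{(a)}_x)^\top$ and $W^{(a+1)}_y=-(D^{(a)}_y)^\top$. Transposition swaps the lower and upper block-bandwidths and the inner bandwidths, which gives the stated bandwidths for $W^{(a)}_x$ and $W^{(a)}_y$. For the Laplacian (\ref{Laplacian_Operator_degenerate}) I add bandwidths of products. $D^{(0)}_xW^{(1)}_x$ has block-bandwidths $(2\deg\rho^2-4,2\deg\rho^2-4)$ with inner bandwidths $(2,2)$. $D^{(0)}_yW^{(1)}_y$ is block-diagonal, lying inside this pattern. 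No conversion matrices are needed because $\partial_y$ already maps $H^{(0)}\to H^{(1)}$.

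The main obstacle is the sharp outer block-bandwidth of $D^{(a)}_x$, namely the value $\deg\rho^2-3$ rather than the crude $\deg\rho^2-1$ that naive degree counting of $q$ gives. I will try to exploit the factor $\rho^2$ in $q$ and in the integrated-by-parts form simultaneously, checking that the top-degree coefficient cancels via the specific constant $c_k$ coming from the Jacobi derivative identities. I will also carefully track the index shifts between the $j=k$ and $j=k-2$ contributions. Failing an exact cancellation, I would verify the bound by explicitly computing the coefficient formulas analogous to (\ref{eq:partialxcmk}) and (\ref{remark}) in Appendix~\ref{AppendixB}.
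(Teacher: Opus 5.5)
You have the right overall structure, but the block-bandwidth argument for $D^{(a)}_x$ has a real gap in the $j=k-2$ blocks.

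\textbf{What is sound.} Your treatment of $\partial_y$ is fine. So are the adjointness identities $W^{(a+1)}_x=-(D^{(a)}_x)^\top$ and $W^{(a+1)}_y=-(D^{(a)}_y)^\top$, the Laplacian count, and the $j=k$ blocks of $D^{(a)}_x$. In the $j=k$ case your constant $c_k$ is in fact $0$: $k\widetilde P_k-t\widetilde P_k'$ has degree at most $k-2$. So $m\le n-1$ is immediate, and integration by parts gives $m\ge n+1-\deg\rho^2$.

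\textbf{Where the argument breaks.} The gap is the case $j=k-2$, which you dismiss as ``treated identically''. Write $R:=R^{(0,0,2a+2k+1)}_{n-k}$ and $S:=R^{(0,0,2a+2k-1)}_{m-k+2}$. Let $A$ and $B$ be the inner products, in $w_P^{(a+1,a+1)}$, of $\widetilde P^{(a,a)}_k$ and of $k\widetilde P^{(a,a)}_k-t\widetilde P^{(a,a)\prime}_k$ with $\widetilde P^{(a+1,a+1)}_{k-2}$. Then
\[
c_{m,k-2}=\int_\alpha^\beta S\,\rho^{2a+2k-1}\Bigl[A\rho^2R'+\tfrac{B}{2}(\rho^2)'R\Bigr]\,\mathrm{d}x .
\]
Orthogonality of $S$ gives $m\le n+\deg\rho^2-3$ directly. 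So the lower block-bandwidth $\deg\rho^2-3$ needs no cancellation, and your stated ``main obstacle'' is misplaced.

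What actually fails is the other side. Moving the derivative off $R$ leaves
\[
c_{m,k-2}=-A\int_\alpha^\beta R\,S'\rho^{2a+2k+1}\,\mathrm{d}x+\bigl(B-(2a+2k+1)A\bigr)\int_\alpha^\beta R\,S\,\rho'\rho^{2a+2k}\,\mathrm{d}x .
\]
The second integrand is $\rho^{2a+2k+1}$ times the non-polynomial $S\rho'/\rho$, so orthogonality of $R$ gives no lower bound on $m$. Without such a bound you cannot exclude $j=k-2$ entries with $n-m>\deg\rho^2-1$, i.e.\ you cannot prove the upper block-bandwidth.

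\textbf{The missing identity.} You need $B=(2a+2k+1)A$. It follows by integrating $\langle t\widetilde P^{(a,a)\prime}_k,\widetilde P^{(a+1,a+1)}_{k-2}\rangle_{w_P^{(a+1,a+1)}}$ by parts in $t$, which gives $-(k+2a+1)A$. Then $c_{m,k-2}=-A\langle R,S'\rangle_{\rho^{2a+2k+1}}$, which vanishes unless $m\ge n-1$. So the $j=k-2$ entries lie in $n-1\le m\le n+\deg\rho^2-3$. Together with $n+1-\deg\rho^2\le m\le n-1$ from $j=k$, this gives exactly $(\deg\rho^2-3,\deg\rho^2-1)$.

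\textbf{How the paper avoids it.} The paper's route (the argument of Appendix~\ref{AppendixB}) integrates by parts in $x$ over $\Omega$ against the full bivariate weight. Since $\partial_xW^{(a+1)}=(a+1)\delta^2(\rho^2)'W^{(a)}$, one gets $c_{m,\ell}=-\langle H^{(a)}_{n,k},p\rangle_{W^{(a)}}$ with the polynomial $p=(\delta^2\rho^2-y^2)\,\partial_xH^{(a+1)}_{m,\ell}+(a+1)\delta^2(\rho^2)'H^{(a+1)}_{m,\ell}$. Its $y^{\ell+2}$-coefficient is a multiple of $S'$, and (\ref{eq:case2monotokoorn}) then gives $n\le m+1$; this encodes the identity automatically.

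Your fallback of reading bandwidths off explicit formulas such as (\ref{remark}) would not close the gap either. Those formulas contain inverses of triangular raising matrices, and the paper relies on the bandwidth result to justify computing only a banded portion of them.
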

The entries of these matrices can be derived using the same approach used to derive the entries of the differentiation matrices in (\ref{eq:PartialXdef})--(\ref{eq:Partialdef}).

\section{Proof of Proposition~\ref{bandwidths_diff}} \label{AppendixA}
\begin{proof}

    Integrating by parts and using the semiclassical Pearson equation (\ref{PearsonEq})--(\ref{eq:sigtauR}) (but with $c+d+1$ replaced with $c+d+2k+1$), we have that	
\begin{eqnarray}
& & \left(D_\eta(w^{(a,b,c+d+2k+1)}_{R})\right)_{[m,n]} \nonumber \\
& =&  \left\langle R_{n}^{(a,b,c+d+2k+1)}{}',  R_{m}^{(a+1,b+1,c+d+2k+2+\eta)}     \right\rangle_{w_R^{(a+1,b+1,c+d+2k+2+\eta)}} \label{eq:scdmip1} \\
& = & \int_{\alpha}^{\beta}   R_{n}^{(a,b,c+d+2k+1)}{}'\, R_{m}^{(a+1,b+1,c+d+2k+2+\eta)} w^{(a+1,b+1,c+d+2k+2+\eta)}_{R} \,\mathrm{d}x \nonumber \\
& =& -\int_{\alpha}^{\beta}   R_{n}^{(a,b,c+d+2k+1)} \frac{\mathrm{d}}{\mathrm{d}x}\left( R_{m}^{(a+1,b+1,c+d+2k+2+\eta)} w^{(a+1,b+1,c+d+2k+2+\eta)}_{R}\right) \,\mathrm{d}x \nonumber \\
& =& -\int_{\alpha}^{\beta}   R_{n}^{(a,b,c+d+2k+1)} \frac{\mathrm{d}}{\mathrm{d}x}\left( R_{m}^{(a+1,b+1,c+d+2k+2+\eta)} \sigma_{R} w^{(a,b,c+d+2k+1)}_{R}\right) \,\mathrm{d}x\nonumber \\
& =& -\int_{\alpha}^{\beta}   R_{n}^{(a,b,c+d+2k+1)}  R_{m}^{(a+1,b+1,c+d+2k+2+\eta)}{}' \sigma_{R} w^{(a,b,c+d+2k+1)}_{R}\,\mathrm{d}x \nonumber \\
&&  - \int_{\alpha}^{\beta}   R_{n}^{(a,b,c+d+2k+1)}  R_{m}^{(a+1,b+1,c+d+2k+2+\eta)} \tau_{R} w^{(a,b,c+d+2k+1)}_{R} \,\mathrm{d}x \nonumber \\
& = & -\left\langle R_{n}^{(a,b,c+d+2k+1)}, p(x) \right\rangle_{w_R^{(a,b,c+d+2k+1)}}  \label{eq:scdmip2}
\end{eqnarray}
where
\begin{equation*}
p(x) = R_{m}^{(a+1,b+1,c+d+2k+2+\eta)}{}' \sigma_{R} +  R_{m}^{(a+1,b+1,c+d+2k+2+\eta)} \tau_{R}.
\end{equation*}
By orthogonality, (\ref{eq:scdmip1}) can only be nonzero if $m \leq \deg R_{n}^{(a,b,c+d+2k+1)}{}' =  n -1$ and (\ref{eq:scdmip2}) can only be nonzero if $n \leq \deg p(x) = m + 1 + d_{\eta}$.  Hence, $\left(D_\eta(w^{(a,b,c+d+2k+1)}_{R})\right)_{[m,n]}$ can only be nonzero if  $n- 1 - d_{\eta}  \leq m \leq n - 1$ and therefore $D_\eta(w^{(a,b,c+d+2k+1)}_{R})$ has bandwidths $(-1, 1 + d_\eta)$.
\end{proof}

\section{Proof of Propositions~\ref{Diff_matrix} and~\ref{weighted_Diff_matrix}} \label{AppendixB}
\begin{proof}
In case (i), it follows from (\ref{eq:case1koorntomono}) that $H_{n,k}^{(a,b,c,d)}$ is a degree-$k$ polynomial in $y$ and it has the leading-order monomial expansion
\begin{equation*}
H_{n,k}^{(a,b,c,d)} = \widetilde{c}_0x^{n+k(d_0-1)} + \widetilde{c}_1x^{n+(k-1)(d_0-1)-1}y + \widetilde{c}_2x^{n+(k-2)(d_0-1)-2}y^2 + \cdots + c_kx^{n-k}y^k + \cdots,
\end{equation*}
hence, by (\ref{eq:case1monotokoorn}), $\partial_x H_{n,k}^{(a,b,c,d)}$ has the following expansion in the $\left\lbrace H_{m,\ell}^{(a+1,b+1,c+1,d+1)} \right\rbrace$-basis
 \begin{equation}
 {\partial  \over \partial x} H_{n,k}^{(a,b,c,d)}(x,y) = \sum_{\ell = 0}^{k} \sum_{m=\ell}^{n+(k-\ell)(d_0-1)-1} c_{m,\ell}^{\,x, (n,k)} \,H_{m,\ell}^{(a+1,b+1,c+1,d+1)}(x,y).  
 \label{eq:case1partialxexp}
\end{equation}
         In case (ii), a similar argument using (\ref{eq:case2koorntomono})--(\ref{eq:case2monotokoorn}) implies  that
\begin{equation}
{\partial  \over \partial x} H_{n,k}^{(a,b,c,d)}(x,y) = \sum_{{\substack{\ell = 0\\ \mathrm{{mod}(\ell,2)\,=\,\mathrm{mod}(k,2)}}}}^{k} \sum_{m=\ell}^{n+(k-\ell)(d_1-2)/2-1} c_{m,\ell}^{\,x, (n,k)} \,H_{m,\ell}^{(a+1,b+1,c+1,d+1)}(x,y). 
 \label{eq:case2partialxexp}
\end{equation}
          	Applying the change of variable $t = y/\rho$ and using (\ref{ClassicalRaising})--(\ref{ClassicalDerivative}), the expansion coefficients in (\ref{eq:case1partialxexp}) and (\ref{eq:case2partialxexp}) are given by
	\begin{align}
		c_{m,\ell}^{\,x, (n,k)} &= \left\langle \,{\partial  \over \partial x} H_{n,k}^{(a,b,c,d)} , \, H_{m,\ell}^{(a+1,b+1,c+1,d+1)} \,\right\rangle_{W^{(a+1,b+1,c+1,d+1)}} \nonumber\\
	   &= \Big( \int_\alpha^\beta \: R_{n-k}^{(a,b,c+d+2k+1)\:\prime}(x) \: \rho(x)^{k+\ell+1} \:R_{m-\ell}^{(a+1,b+1,c+d+2\ell+3)}(x) \: w^{(a+1,b+1,c+d+2)}_R(x) \: \mathrm{d}x \Big) \nonumber \\
	   &\quad \quad \quad \quad \quad \quad \cdot \: \Big( \int_{\gamma}^{\delta} \: \widetilde{P}_k^{(d,c)}(t) \: \widetilde{P}_\ell^{(d+1,c+1)}(t) \: w^{(d+1,c+1)}_P(t) \: \mathrm{d}t \Big) \nonumber \\
	   & + \Big( \int_\alpha^\beta \: R_{n-k}^{(a,b,c+d+2k+1)}(x) \: \rho(x)^{k+\ell}\:\rho^\prime(x) \:R_{m-\ell}^{(a+1,b+1,c+d+2\ell+3)}(x) \: w^{(a+1,b+1,c+d+2)}_R(x) \: \mathrm{d}x \Big)  \nonumber \\
	   &\quad \quad \quad \quad \quad \quad \cdot \: \Big( \int_{\gamma}^{\delta} \: \left[k\widetilde{P}_k^{(d,c)}(t) - t\, \widetilde{P}_k^{(d,c)\:\prime}(t) \right] \: \widetilde{P}_\ell^{(d+1,c+1)}(t) \: w^{(d+1,c+1)}_P(t) \: \mathrm{d}t \Big) \label{eq:2ndterm} \\
    	   &= \langle \,R_{n-k}^{(a,b,c+d+2k+1)\:\prime}, \, \rho^{k+\ell+1} \:R_{m-\ell}^{(a+1,b+1,c+d+2\ell+3)} \, \rangle_{w^{(a+1,b+1,c+d+2)}_R} \nonumber \\ 
	   &\quad \quad  \cdot \: \langle r^{(d+1,c+1)}_{k}\,\widetilde{P}^{(d+1,c+1)}_{k} - t^{(d+1,c+1)}_{k-1}\,\widetilde{P}^{(d+1,c+1)}_{k-1} -s^{(d+1,c+1)}_{k-2}\,\widetilde{P}^{(d+1,c+1)}_{k-2},\, \widetilde{P}_\ell^{(d+1,c+1)} \rangle_{w^{(d+1,c+1)}_P} \label{eq:jacip1}\\
	   &+ k\, \langle \,R_{n-k}^{(a,b,c+d+2k+1)}, \, \rho^{k+\ell}\:\rho^\prime \:R_{m-\ell}^{(a+1,b+1,c+d+2\ell+3)} \, \rangle_{w^{(a+1,b+1,c+d+2)}_R} \nonumber \\ 
	   &\quad \quad  \cdot \: \langle r^{(d+1,c+1)}_{k}\,\widetilde{P}^{(d+1,c+1)}_{k} - t^{(d+1,c+1)}_{k-1}\,\widetilde{P}^{(d+1,c+1)}_{k-1} -s^{(d+1,c+1)}_{k-2}\,\widetilde{P}^{(d+1,c+1)}_{k-2},\, \widetilde{P}_\ell^{(d+1,c+1)} \rangle_{w^{(d+1,c+1)}_P} \label{eq:jacip2}\\
	   &- d_{k-1}^{(d+1,c+1)}\, \langle \,R_{n-k}^{(a,b,c+d+2k+1)}, \, \rho^{k+\ell}\:\rho^\prime \:R_{m-\ell}^{(a+1,b+1,c+d+2\ell+3)} \, \rangle_{w^{(a+1,b+1,c+d+2)}_R} \nonumber \\ 
	   &\quad \quad  \cdot \: \langle \delta_{k-1}^{(d+1,c+1)}\,\widetilde{P}_{k}^{(d+1,c+1)} + \gamma_{k-1}^{(d+1,c+1)} \,\widetilde{P}_{k-1}^{(d, c)} + \delta_{k-2}^{(d+1,c+1)}\,\widetilde{P}_{k-2}^{(d+1,c+1)},\, \widetilde{P}_\ell^{(d+1,c+1)} \rangle_{w^{(d+1,c+1)}_P}.  \label{eq:jacip3}
   \end{align}	
The inner products (\ref{eq:jacip1})--(\ref{eq:jacip3}) (and thus also $c_{m,\ell}^{\,x, (n,k)}$) can be  nonzero only for $\ell = k, k-1, k-2$.  In case (ii), since $k$ and $\ell$ must have the same parity, see (\ref{eq:case2partialxexp}),  $c_{m,\ell}^{\,x, (n,k)} = 0$ if $\ell = k - 1$.
   
   When $\ell=k$, the integral (\ref{eq:2ndterm}) is zero since $k\widetilde{P}_k^{(d,c)}(t) - t\, \widetilde{P}_k^{(d,c)\:\prime}(t)$ is a polynomial of degree at most $k-1$.
     Thus, (\ref{eq:jacip1})--(\ref{eq:jacip3}) simplifies to
   \begin{align}
	c_{m,k}^{\,x, (n,k)} &= r_k^{(d+1,c+1)}\,\langle \,R_{n-k}^{(a,b,c+d+2k+1)\:\prime}, \, R_{m-k}^{(a+1,b+1,c+d+2k+3)} \, \rangle_{w^{(a+1,b+1,c+d+2k+3)}_R}. \nonumber 
     \end{align}
Using (\ref{DMdef}) and (\ref{SemiclassicalRaisingfomula}), it follows that this inner product are entries of the following matrix
\begin{eqnarray*}
& & \left\langle \mathbf{R}^{(a+1,b+1,c+d+2k+3)}, \frac{\mathrm{d}}{\mathrm{d}x} \mathbf{R}^{(a,b,c+d+2k+1)}  \right\rangle_{w^{(a+1,b+1,c+d+2k+3)}_R} \\
&&  = \left\langle \mathbf{R}^{(a+1,b+1,c+d+2k+3)}, \mathbf{R}^{(a+1,b+1,c+d+2k+2+\eta)}\,D_\eta(w^{(a,b,c+d+2k+1)}_R) \right\rangle_{w^{(a+1,b+1,c+d+2k+3)}_R}  \\
&& = \left\langle \mathbf{R}^{(a+1,b+1,c+d+2k+3)}, \mathbf{R}^{(a+1,b+1,c+d+2k+3)}\mathcal{T}_{(a+1,b+1,c+d+2k+2+\eta)}^{(a+1,b+1,c+d+2k+3)}\,D_\eta(w^{(a,b,c+d+2k+1)}_R) \right\rangle_{w^{(a+1,b+1,c+d+2k+3)}_R} \\
&& = \mathcal{T}_{(a+1,b+1,c+d+2k+2+\eta)}^{(a+1,b+1,c+d+2k+3)}\,D_\eta(w^{(a,b,c+d+2k+1)}_R).
\end{eqnarray*} 
Hence, 
   \begin{align}
	c_{m,k}^{\,x, (n,k)} &= \Big( r_k^{(d+1,c+1)}\,\mathcal{T}_{(a+1,b+1,c+d+2k+2+\eta)}^{(a+1,b+1,c+d+2k+3)}\,{D_{\eta}(w^{(a,b,c+d+2k+1)}_{R})}  \Big)_{[m-k,n-k]}.  \label{eq:partialxcmk}
   \end{align}   

When $\ell=k-2$, (\ref{eq:jacip1})--(\ref{eq:jacip3})  reduce to
   \begin{align} 
	&c_{m,k-2}^{\,x, (n,k)} = -\, s_{k-2}^{(d+1, c+1)}\,\langle \,R_{n-k}^{(a,b,c+d+2k+1)\:\prime}, \, \rho^2\,R_{m-k+2}^{(a+1,b+1,c+d+2k-1)} \, \rangle_{w^{(a+1,b+1,c+d+2k-1)}_R} \nonumber\\
				  &\quad - (k\,s_{k-2}^{(d+1, c+1)}+\delta_{k-2}^{(d+1,c+1)}\,d_{k-1}^{(d+1, c+1)})\,\langle \,R_{n-k}^{(a,b,c+d+2k+1)}, \, \rho\rho' \,R_{m-k+2}^{(a+1,b+1,c+d+2k-1)} \, \rangle_{w^{(a+1,b+1,c+d+2k-1)}_R} \label{eq:km2ip2}
      \end{align}
The first inner product is computed by first lowering (via the inverse of a raising matrix) and then differentiating  the OPs $\left\lbrace R_{n}^{(a,b,c+d+2k+1)} \right\rbrace$: using (\ref{eq:semiclassjacdef}), (\ref{DMdef}) and (\ref{SemiclassicalRaisingfomula}), we have   
          \begin{eqnarray*}
& & \left\langle \rho^2 \mathbf{R}^{(a+1,b+1,c+d+2k-1)}, \frac{\mathrm{d}}{\mathrm{d}x} \mathbf{R}^{(a,b,c+d+2k+1)}  \right\rangle_{w^{(a+1,b+1,c+d+2k-1)}_R} \\
&&  = \left\langle \mathbf{R}^{(a+1,b+1,c+d+2k-1)}\rho^2 \left(J(w^{(a+1,b+1,c+d+2k-1)}_{R})\right), \right. \\
&&  \qquad\left. \frac{\mathrm{d}}{\mathrm{d}x} \mathbf{R}^{(a,b,c+d+2k-2-\eta)}\,\left(\mathcal{T}_{(a,b, c+d+2k-2-\eta)}^{(a,b,c+d+2k+1)}\right)^{-1}\ \right\rangle_{w^{(a+1,b+1,c+d+2k-1)}_R}   \\
&&  = \left\langle \mathbf{R}^{(a+1,b+1,c+d+2k-1)}\rho^2 \left(J(w^{(a+1,b+1,c+d+2k-1)}_{R})\right), \right. \\
&&  \qquad\left. \mathbf{R}^{(a+1,b+1,c+d+2k-1)}\,D_\eta(w^{(a,b,c+d+2k-2-\eta)}_R)\left(\mathcal{T}_{(a,b, c+d+2k-2-\eta)}^{(a,b,c+d+2k+1)}\right)^{-1}\ \right\rangle_{w^{(a+1,b+1,c+d+2k-1)}_R} \\
&& = \rho^2 \left(J(w^{(a+1,b+1,c+d+2k-1)}_{R})\right) D_\eta(w^{(a,b,c+d+2k-2-\eta)}_R)\left(\mathcal{T}_{(a,b, c+d+2k-2-\eta)}^{(a,b,c+d+2k+1)}\right)^{-1}.
\end{eqnarray*} 
As we shall show, we only need to compute a banded portion of this matrix and thus we use Lemma~\ref{thm:MatrixProduct} to compute its required entries in linear complexity.  

The second inner product  (\ref{eq:km2ip2}) is computed by first lowering the third parameter and then raising the first and second parameters of the OPs $\left\lbrace R_{n}^{(a,b,c+d+2k+1)} \right\rbrace$:
\begin{eqnarray*}
& & \left\langle \rho\rho' \mathbf{R}^{(a+1,b+1,c+d+2k-1)}, \mathbf{R}^{(a,b,c+d+2k+1)}  \right\rangle_{w^{(a+1,b+1,c+d+2k-1)}_R} \\
&& = (\rho\rho') \left(J(w^{(a+1,b+1,c+d+2k-1)}_{R})\right) \mathcal{T}_{(a,b, c+d+2k+1)}^{(a+1,b+1,c+d+2k+1)}\left(\mathcal{T}_{(a,b, c+d+2k-1)}^{(a,b,c+d+2k+1)}\right)^{-1},
\end{eqnarray*} 
where $(\rho\rho')(\cdot) \equiv \rho(\cdot)\rho'(\cdot)$, and again the entries of this matrix are computed in linear complexity using Lemma~\ref{thm:MatrixProduct}.

Hence, (\ref{eq:km2ip2}) can be expressed as
  \begin{align}
	&c_{m,k-2}^{\,x, (n,k)} = 
				   \Big(-s_{k-2}^{(d+1,c+1)}\, \rho^2 \left(J(w^{(a+1,b+1,c+d+2k-1)}_{R})\right) \,D_{\eta}(w^{(a,b,c+d+2k-2-\eta)}_{R})\,\left(\mathcal{T}_{(a,b, c+d+2k-2-\eta)}^{(a,b,c+d+2k+1)}\right)^{-1} \nonumber \\
				  &\quad - (k\,s_{k-2}^{(d+1, c+1)}+\delta_{k-2}^{(d+1,c+1)}\,d_{k-1}^{(d+1, c+1)})\,(\rho \rho')\left(J(w^{(a+1,b+1,c+d+2k-1)}_{R})\right) \,\mathcal{T}_{(a,b,c+d+2k-1)}^{(a+1,b+1,c+d+2k-1)}\nonumber\\
				  &\qquad \qquad \cdot  \,\left(\mathcal{T}_{(a,b, c+d+2k-1)}^{(a,b,c+d+2k+1)}\right)^{-1}\Big)_{[m-k+2,n-k]}.  \label{remark}
   \end{align}

When $\ell=k-1$, $c_{m,k-1}^{\,x, (n,k)} = 0$ in case (ii), hence we only need to consider case (i), for which (\ref{eq:jacip1})--(\ref{eq:jacip3}) reduce to
   \begin{align}
		&c_{m,k-1}^{\,x, (n,k)} = -t_{k-1}^{(d+1,c+1)}\,\langle \,R_{n-k}^{(a,b,c+d+2k+1)\:\prime}, \, \rho\,R_{m-k+1}^{(a+1,b+1,c+d+2k+1)} \, \rangle_{w^{(a+1,b+1,c+d+2k+1)}_R}\nonumber \\
		&\quad - (k\,t_{k-1}^{(d+1, c+1)}+\gamma_{k-1}^{(d+1,c+1)}\,d_{k-1}^{(d+1, c+1)})\,\langle \,R_{n-k}^{(a,b,c+d+2k+1)}, \, \rho' \,R_{m-k+1}^{(a+1,b+1,c+d+2k+1)} \, \rangle_{w^{(a+1,b+1,c+d+2k+1)}_R}. \nonumber
     \end{align}
Similar to the derivation of (\ref{remark}), we can express this in terms of entries of univariate operator matrices as follows,
 \begin{align}
		&c_{m,k-1}^{\,x, (n,k)} =  \Big(-t_{k-1}^{(d+1,c+1)}\,\rho \left(J(w^{(a+1,b+1,c+d+2k+1)}_{R})\right) \,D_{0}(w^{(a,b,c+d+2k)}_{R})\,\left(\mathcal{T}_{(a,b,c+d+2k)}^{(a,b,c+d+2k+1)}\right)^{-1} \nonumber  \\
		&\quad - (k\,t_{k-1}^{(d+1, c+1)}+\gamma_{k-1}^{(d+1,c+1)}\,d_{k-1}^{(d+1, c+1)})\,\rho' \left(J(w^{(a+1,b+1,c+d+2k+1)}_{R})\right) \,\mathcal{T}_{(a,b,c+d+2k+1)}^{(a+1,b+1,c+d+2k+1)}\Big)_{[m-k+1,n-k]},  \label{eq:partialxcmkm1}
   \end{align}
where the first term is computed using Lemma~\ref{thm:MatrixProduct}. 
 
To determine the values of $m$ for which the coefficients $c_{m,\ell}^{\,x, (n,k)}$ are nonzero and thereby determine the block-bandwidths of $D_x^{(a,b,c,d)}$, we integrate by parts and arrive at
   \begin{align}
	&c_{m,\ell}^{\,x, (n,k)} = \left\langle \,{\partial  \over \partial x} H_{n,k}^{(a,b,c,d)} , \, H_{m,\ell}^{(a+1,b+1,c+1,d+1)} \,\right\rangle_{W^{(a+1,b+1,c+1,d+1)}} \label{DerivativeXCoeff1}\\
				  &= 
 \iint_{\Omega}				  {\partial  \over \partial x}H_{n,k}^{(a,b,c,d)}\: H_{m,\ell}^{(a+1,b+1,c+1,d+1)}\:W^{(a+1,b+1,c+1,d+1)}\: \mathrm{d}x   \: \mathrm{d}y \nonumber\\
				  &= -\iint_{\Omega} H_{n,k}^{(a,b,c,d)}\: [W^{(1,1,1,1)}\:{\partial  \over \partial x}H_{m,\ell}^{(a+1,b+1,c+1,d+1)}]\:W^{(a,b,c,d)}\: \mathrm{d}x   \: \mathrm{d}y \nonumber\\
				  &\quad +(a+1)\iint_{\Omega} H_{n,k}^{(a,b,c,d)}\: [W^{(0,1,1,1)}\:H_{m,\ell}^{(a+1,b+1,c+1,d+1)}]\:W^{(a,b,c,d)}\: \mathrm{d}x   \: \mathrm{d}y \nonumber\\
				  &\quad -(b+1)\iint_{\Omega} H_{n,k}^{(a,b,c,d)}\: [W^{(1,0,1,1)}\:H_{m,\ell}^{(a+1,b+1,c+1,d+1)}]\:W^{(a,b,c,d)}\: \mathrm{d}x   \: \mathrm{d}y \nonumber\\
				  &\quad +\gamma(c+1)\iint_{\Omega} H_{n,k}^{(a,b,c,d)}\:(\delta \rho - y)  \rho'\:W^{(1,1,0,0)}\:H_{m,\ell}^{(a+1,b+1,c+1,d+1)}]\:W^{(a,b,c,d)}\: \mathrm{d}x   \: \mathrm{d}y \nonumber\\
				  &\quad -\delta(d+1)\iint_{\Omega} H_{n,k}^{(a,b,c,d)}\: (y-\gamma \rho)\rho'\:W^{(1,1,0,0)}\:H_{m,\ell}^{(a+1,b+1,c+1,d+1)}]\:W^{(a,b,c,d)}\: \mathrm{d}x   \: \mathrm{d}y \nonumber\\
				  &=
  -\left\langle\, H_{n,k}^{(a,b,c,d)},p(x,y) \right\rangle_{W^{(a,b,c,d)}}, \label{DerivativeXCoeff2}
   \end{align}
where
\begin{eqnarray*}
 p(x,y) &=& W^{(1,1,1,1)}\:{\partial  \over \partial x}H_{m,\ell}^{(a+1,b+1,c+1,d+1)} - \left[(a+1)\:W^{(0,1,1,1)} - (b+1)\:W^{(1,0,1,1)} \right]\:H_{m,\ell}^{(a+1,b+1,c+1,d+1)}   \\
& & - \left[ \left(\gamma(c+1)(\delta \rho - y) -\delta(d+1)  (y-\gamma \rho)  \right)\rho'\:W^{(1,1,0,0)}\right]\:H_{m,\ell}^{(a+1,b+1,c+1,d+1)}.
\end{eqnarray*}

In case (i), it follows from (\ref{eq:case1koorntomono}) and (\ref{eq:Wdef}) that $p(x, y)$ is a degree-$(\ell+2)$ polynomial in $y$ and it has the leading-order monomial expansion
\begin{eqnarray*}
 p(x,y) &=& \widetilde{c}_0x^{m+\ell(d_0-1)+2d_0 + 1} + \widetilde{c}_1x^{m+\ell(d_0-1)+d_0 + 1}y + \widetilde{c}_2x^{m+\ell(d_0-1)+ 1}y^2 + \cdots + \\
& &  \widetilde{c}_\ell x^{m+2d_0+1-\ell}y^\ell + \widetilde{c}_{\ell+1} x^{m+d_0+1-\ell}y^{\ell+1} + \widetilde{c}_{\ell+2} x^{m+1-\ell}y^{\ell+2} + \cdots,
\end{eqnarray*}
and hence by (\ref{eq:case1monotokoorn}), $p(x, y)$ has an expansion in the $\left\lbrace H^{(a,b,c,d)}_{n,k} \right\rbrace$-basis given by
\begin{equation}
p(x,y) = \sum_{j = 0}^{\ell + 2} \:\sum_{i = j}^{m + (\ell - j)(d_0 -1)+2d_0 +1}  c_{i,j}H_{i,j}^{(a,b,c,d)}. \label{eq:case1pexp}
\end{equation}

Recall that in case (i), the coefficients (\ref{DerivativeXCoeff1}) can only be nonzero for $\ell = k-2, k-1, k$.  Setting $\ell = k - 2$ in (\ref{eq:case1pexp}) it follows that the inner product (\ref{DerivativeXCoeff2}) can only be nonzero if $j = k$ in (\ref{eq:case1pexp}) and $m + (\ell - j)(d_0 -1)+2d_0 +1 =  m + 3 \geq n$;  setting $\ell = k - 2$ in (\ref{eq:case1partialxexp}), it follows that $m \leq n + 2d_0 - 3$.  Thus, $c_{m,k-2}^{\,x, (n,k)}$, whose expression is given in (\ref{remark}),  can only be nonzero for $n - 3 \leq m \leq n + 2d_0 - 3$ and hence the summation limits of the second summation in Proposition~\ref{Diff_matrix}.  Setting $\ell = k - 1$,  a similar argument shows that   $c_{m,k-1}^{\,x, (n,k)}$ can only be nonzero for $n - d_0 -2 \leq m \leq n + d_0 - 2$ and setting $\ell = k$, it follows that $c_{m,k}^{\,x, (n,k)}$ can only be nonzero for $n - 2d_0 -1 \leq m \leq n - 1$. 
                      
In case (ii), with $p(x, y)$ defined as above but with $c = d$ and $\gamma = -\delta$, one can deduce from the leading-order monomial expansion of $p(x, y)$ and (\ref{eq:case2monotokoorn}) that it has the following expansion
   \begin{equation}
	p(x,y) = \sum_{\substack{j = 0 \\ \mathrm{mod}(j,2)\,=\,\mathrm{mod}(\ell,2)}}^{\ell+2} \: \sum_{i=j}^{m+(\ell-j)(d_1-2)/2+d_1+1} c_{i,j} \,H_{i,j}^{(a,b,c,d)}(x,y).  \label{eq:case2pexp}
\end{equation}

Recall that in case (ii), the coefficients (\ref{DerivativeXCoeff1}) can only be nonzero for $\ell = k-2, k$.  Setting $\ell = k - 2$ in (\ref{eq:case2pexp}) it follows that the inner product (\ref{DerivativeXCoeff2}) can only be nonzero if $j = k$ in (\ref{eq:case2pexp}) and $m + (\ell - j)(d_1 -2)/2+d_1 +1 =  m + 3 \geq n$;  setting $\ell = k - 2$ in (\ref{eq:case2partialxexp}), it follows that $m \leq n + d_1 - 3$ and thus $c_{m,k-2}^{\,x, (n,k)}$  can only be nonzero for $n - 3 \leq m \leq n + d_1 - 3$.  By a similar argument, $c_{m,k}^{\,x, (n,k)}$  can only be nonzero for $n - d_1 -1 \leq m \leq n -1$.  This completes the proof of Proposition~\ref{Diff_matrix}.

To prove Proposition~\ref{weighted_Diff_matrix}, 
 we note that the weighted derivative has the following expansion 
    \begin{equation*}
		{\partial  \over \partial x}[W^{(a,b,c,d)}(x,y)\,H_{n,k}^{(a,b,c,d)}(x,y)] = \sum_{m \geq 0}\sum_{j = 0}^{m} \bar{c}_{m,j}^{\,x, (n,k)} \,W^{(a-1,b-1,c-1,d-1)}(x,y)\,H_{m,j}^{(a-1,b-1,c-1,d-1)}(x,y).
   \end{equation*}
Using the fact that $\left\lbrace W^{(a-1,b-1,c-1,d-1)} H_{m,j}^{(a-1,b-1,c-1,d-1)}  \right\rbrace$ are orthonormal with respect to $\langle \cdot, \cdot \rangle_{W^{(1-a,1-b,1-c,1-d)}}$ and integrating by parts, the expansion coefficients are given by
   \begin{align}
	   \bar{c}_{m,j}^{\,x, (n,k)} &= \left\langle \,{\partial  \over \partial x}(W^{(a,b,c,d)}\,H_{n,k}^{(a,b,c,d)}) , \, W^{(a-1,b-1,c-1,d-1)}\,H_{m,j}^{(a-1,b-1,c-1,d-1)} \,\right\rangle_{W^{(1-a,1-b,1-c,1-d)}} \nonumber\\
	   	   &=  \iint_{\Omega}\: {\partial  \over \partial x}[W^{(a,b,c,d)}(x,y)\,H_{n,k}^{(a,b,c,d)}(x,y)] \: H_{m,j}^{(a-1,b-1,c-1,d-1)}(x,y) \: \mathrm{d}x \: \mathrm{d}y \nonumber \\
	   &=  - \iint_{\Omega} \: {\partial  \over \partial x}H_{m,j}^{(a-1,b-1,c-1,d-1)}(x,y) \: [H_{n,k}^{(a,b,c,d)}(x,y)\:W^{(a,b,c,d)}(x,y)]  \: \mathrm{d}y \: \mathrm{d}x \nonumber \\
	   &= - \left\langle \,{\partial  \over \partial x}H_{m,j}^{(a-1,b-1,c-1,d-1)}, \, H_{n,k}^{(a,b,c,d)} \, \right\rangle_{W^{(a,b,c,d)}}. \nonumber 
      \end{align}	
 The following expressions for the nonzero expansion coefficients and the limits of summation in Proposition~\ref{weighted_Diff_matrix} can be derived in a similar manner as for Proposition~\ref{Diff_matrix}
	\begin{align}
		 \bar{c}_{m,k}^{\,x, (n,k)} =& \Big( - r_k^{(d,c)}\,\mathcal{T}_{(a-1,b-1,c+d+2k+\eta)}^{(a-1,b-1,c+d+2k+1)}\,{D_{\eta}(w^{(a-1,b-1,c+d+2k-1)}_{R})}  \Big)_{[n-k,m-k]},  
		\label{eq:partialxwcmk}\\
 \bar{c}_{m,k+2}^{\,x, (n,k)} = &  \Big(s_{k}^{(d,c)}\,\rho^2 \left(J(w^{(a,b,c+d+2k+1)}_{R})\right) \,D_{\eta}(w^{(a-1,b-1,c+d+2k-\eta)}_{R})\,\left(\mathcal{T}_{(a-1,b-1,c+d+2k-\eta)}^{(a-1,b-1,c+d+2k+3)}\right)^{-1} \nonumber \\
			& - [(k+2)s_{k}^{(d, c)}+\delta_{k}^{(d,c)}d_{k+1}^{(d, c)}](\rho \rho')\left(J(w^{(a,b,c+d+2k+1)}_{R})\right)\cdot \nonumber\\
&			 \mathcal{T}_{(a-1,b-1,c+d+2k+1)}^{(a,b,c+d+2k+1)} \left(\mathcal{T}_{(a-1,b-1,c+d+2k+1)}^{(a-1,b-1,c+d+2k+3)}\right)^{-1}\Big)_{[n-k, m-k-2]} \nonumber \\
\bar{c}_{m,k+1}^{\,x, (n,k)} = & \begin{cases}
			\textnormal{case (i):} &\Big(t_{k}^{(d,c)}\rho \left(J(w^{(a,b,c+d+2k+1)}_{R})\right) D_{0}(w^{(a-1,b-1,c+d+2k)}_{R})\left(\mathcal{T}_{(a-1,b-1,c+d+2k)}^{(a-1,b-1,c+d+2k+1)}\right)^{-1}  \\
			&+ [(k+1)\,t_{k}^{(d, c)}+\gamma_{k}^{(d,c)}\,d_{k}^{(d, c)}]\,\rho' \left(J(w^{(a,b,c+d+2k+1)}_{R})\right) \\
			&\cdot \,\mathcal{T}_{(a-1,b-1,c+d+2k+1)}^{(a,b,c+d+2k+1)}\Big)_{[n-k, m-k-1]},  
			\\
			\textnormal{case (ii):} & 0
		\end{cases}.
		\label{eq:partialxwcmkp1}
	\end{align}
\end{proof}

\section{Proposition~\ref{structure_conversion_mat} proof outline} \label{AppendixC}
\begin{proof}
Proposition~\ref{structure_conversion_mat}  follows from the fact that the generalised Koornwinder polynomials satisfy
\begin{eqnarray}
H_{n,k}^{(a,b,c,d)} &=& \sum_{m=n-2}^{n}  q_{m,k}^{(n,k)} \, H_{m, k}^{(a+1,b+1,c,d)},\label{eq:koornconvexp}\\
H_{n,k}^{(a,b,c,d)} &=& \sum_{m=n-\deg\rho^2}^{n}  r_{m,k}^{(n,k)} \, H_{m, k}^{(a,b,c+1,d+1)} + \sum_{m=n-2}^{n+\deg\rho^2-2}  r_{m,k-2}^{(n,k)} \, H_{m, k-2}^{(a,b,c+1,d+1)}
							\nonumber \\
						& &	 + \sum_{m=n-d_0-1}^{n+d_0-1}  r_{m,k-1}^{(n,k)} \, H_{m, k-1}^{(a,b,c+1,d+1)},\label{eq:koornconvexp2}					
\end{eqnarray}
and the weighted generalised Koornwinder polynomials satisfy
\begin{eqnarray}
&& W^{(a,b,c,d)}\,H_{n,k}^{(a,b,c,d)} = \sum_{m=n}^{n+2} \bar{q}_{m,k}^{(n,k)} \,W^{(a-1,b-1,c,d)}\,H_{m,k}^{(a-1,b-1,c,d)},\label{eq:wkoornexp}\\
&& W^{(a,b,c,d)}H_{n,k}^{(a,b,c,d)} = \sum_{m=n}^{n+\deg\rho^2}  \bar{r}_{m,k}^{(n,k)}  W^{(a,b,c-1,d-1)}H_{m, k}^{(a,b,c-1,d-1)} 
	 + \nonumber \\
	&& \sum_{m=n-\deg\rho^2+2}^{n+2}  \bar{r}_{m,k+2}^{(n,k)}  W^{(a,b,c-1,d-1)}H_{m, k+2}^{(a,b,c-1,d-1)}
 + \sum_{m=n-d_0+1}^{n+d_0+1}  \bar{r}_{m,k+1}^{(n,k)} W^{(a,b,c-1,d-1)}H_{m, k+1}^{(a,b,c-1,d-1)}.\quad\qquad \label{eq:wkoornconvexp}
\end{eqnarray}
The expansion coefficients in (\ref{eq:koornconvexp})--(\ref{eq:wkoornconvexp}) are given by
\begin{eqnarray}
q_{m,k}^{(n,k)} &=& \left( \mathcal{T}_{(a,b,c+d+2k+1)}^{(a+1,b+1,c+d+2k+1)} \right)_{[m-k,n-k]},  \label{eq:expcoeffs1} \\
r_{m,k}^{(n,k)} &=& r_{k}^{(d+1,c+1)}\left(\mathcal{T}_{(a,b,c+d+2k+1)}^{(a,b,c+d+2k+3)} \right)_{[m-k,n-k]}, \label{eq:expcoeffsr}\\
r_{m,k-2}^{(n,k)} &=& -s_{k-2}^{(d+1,c+1)}  \left(\mathcal{T}_{(a,b,c+d+2k-1)}^{(a,b,c+d+2k+1)} \right)_{[n-k,m-k+2]} , \\
r_{m,k-1}^{(n,k)} &=& \begin{cases}
			 -t_{k-1}^{(d+1,c+1)} \times \left( \rho \left(J(w_R^{(a,b,c+d+2k+1)})\right) \right)_{[n-k,m-k+1]}, & \text{in case (i)} \\
			 0,  & \text{in case (ii)}
		\end{cases}, \label{eq:expcoeffsr2} \\
\bar{q}_{m,k}^{(n,k)} &=& \left( \mathcal{T}_{(a-1,b-1,c+d+2k+1)}^{(a,b,c+d+2k+1)} \right)_{[n-k,m-k]}	\label{eq:wexpcoeffs}\\
\bar{r}_{m,k}^{(n,k)} &=& r_{k}^{(d,c)} \left(\mathcal{T}_{(a,b,c+d+2k-1)}^{(a,b,c+d+2k+1)} \right)_{[n-k,m-k]}, \label{eq:rbar}\\
\bar{r}_{m,k+2}^{(n,k)} &=& -s_{k}^{(d,c)}  \left(\mathcal{T}_{(a,b,c+d+2k+1)}^{(a,b,c+d+2k+3)} \right)_{[m-k-2,n-k]} , \\
\bar{r}_{m,k+1}^{(n,k)} &=& \begin{cases}
			 -t_{k}^{(d,c)} \times \left( \rho \left(J(w_R^{(a,b,c+d+2k+1)})\right) \right)_{[n-k,m-k-1]}, & \text{in case (i)},\\
		 0, & \text{in case (ii)}
		\end{cases}.  \label{eq:rbar2}
\end{eqnarray}
The expansion (\ref{eq:koornconvexp}) and coefficients (\ref{eq:expcoeffs1}) are derived by calculating the inner products
\begin{equation*}
H_{n,k}^{(a,b,c,d)} = \sum_{m \geq 0}\sum_{\ell = 0}^{m} q_{m,\ell}^{(n,k)}H^{(a+1,b+1,c,d)}_{m,\ell},\qquad q_{m,\ell}^{(n,k)} = \left\langle H_{n,k}^{(a,b,c,d)}, H^{(a+1,b+1,c,d)}_{m,\ell} \right\rangle_{W^{(a+1,b+1,c,d)}},
\end{equation*}
in a manner similar to the proof of Proposition~\ref{M_y} and likewise, (\ref{eq:koornconvexp2}) and (\ref{eq:expcoeffsr})--(\ref{eq:expcoeffsr2}) can be thus derived.

The expressions (\ref{eq:wkoornexp}) and (\ref{eq:wexpcoeffs}) follow from setting
\begin{equation*}
W^{(a,b,c,d)}H_{n,k}^{(a,b,c,d)} = \sum_{m \geq 0}\sum_{\ell = 0}^{m} \bar{q}_{m,\ell}^{(n,k)}W^{(a-1,b-1,c,d)}H^{(a-1,b-1,c,d)}_{m,\ell},
\end{equation*}
and calculating the inner products
\begin{equation*}
\bar{q}_{m,\ell}^{(n,k)} = \left\langle W^{(a,b,c,d)}H_{n,k}^{(a,b,c,d)}, W^{(a-1,b-1,c,d)}H^{(a-1,b-1,c,d)}_{m,\ell} \right\rangle_{W^{(1-a,1-b,-c,-d)}},
\end{equation*}
and likewise, (\ref{eq:wkoornconvexp}) and (\ref{eq:rbar})--(\ref{eq:rbar2}) can be similarly derived.

\end{proof}

\end{document}